\newcommand{\R}{\mbox{$\mathbb{R}$}}
\newcommand{\NN}{\mathbf{N}}
\newtheorem{lemma}{Lemma}[section]
\newtheorem{prop}[lemma]{Proposition}
\newtheorem{thm}[lemma]{Theorem}
\newtheorem{cor}[lemma]{Corollary}
\theoremstyle{definition}
\newtheorem{Def}[lemma]{Definition}
\newtheorem{exam}[lemma]{Example}
\newtheorem{exams}[lemma]{Examples}
\theoremstyle{remark}
\newtheorem{rem}[lemma]{Remark}
\newcommand{\etal}{{\it et al.}}
\title{Synchrony and Anti-synchrony in Weighted Networks}
\author{Manuela Aguiar}
\address{Manuela Aguiar, Faculdade de Economia, Centro de Matem\'atica, Universidade do Porto,
Rua Dr Roberto Frias, 4200-464 Porto, Portugal.}
\email[Corresponding author]{maguiar@fep.up.pt}
\author{Ana Dias}
\address{Ana Dias, Departamento de Matem\'atica, Centro de Matem\'atica, Universidade do Porto,
Rua do Campo Alegre, 687, 4169-007 Porto, Portugal}
\email{apdias@fc.up.pt}
\date{\today}
\begin{document}

\maketitle

\begin{abstract}
We consider weighted coupled cell networks, that is networks where the interactions between any two cells have an associated weight that is a real valued number. Weighted networks are ubiquitous in real-world applications. We consider a dynamical systems perspective by associating to each network a set of continuous dynamical systems, the ones that respect the graph structure of the network. For weighted networks it is natural for the admissible coupled cell systems to have an additive input structure. We present a characterization of the synchrony subspaces and the anti-synchrony subspaces for a weighted network depending on the restrictions that are imposed to their admissible input-additive coupled cell systems. These subspaces are flow-invariant by  those systems and are generalized polydiagonal subspaces, that is, are characterized by conditions on the cell coordinates of the types $x_i = x_j$ and/or $x_k = -x_l$ and/or $x_m=0$. The existence and identification of the synchrony and anti-synchony subspaces for a weighted network are deeply relevant from the applications and dynamics point of view. Our characterization of the synchrony and anti-synchrony subspaces of a weighted network follows from our results where we give  necessary and sufficient conditions for a generalized polydiagonal to be left invariant by the adjacency matrix and/or the Laplacian matrix of the network.

\vspace{3mm}

\noindent  AMS classification scheme numbers: 05C50 05C22 05C90 34C15

\vspace{3mm}

\noindent  Keywords: weighted network, adjacency matrix, Laplacian matrix, generalized polydiagonal, coupled cell system with additive structure, synchrony, anti-synchrony.
\end{abstract}

\tableofcontents

\section{Introduction}

Networks are often used to model many applications in a huge set of scientific areas, see Arenas~\etal~\cite{ADMZ08} and references therein. From the dynamical systems perspective, an ultimate goal is to use properties of 
the {\it network}, as a graph object, to induce features for the associated {\it coupled cell systems}, the ones that respect the graph structure of the network. 
Here, we consider systems of ordinary differential 
equations. In the coupled cell systems formalism of Stewart, Golubitsky and collaborators~\cite{GSP03,GST05} and in the one of Field~\cite{F05}, 
the network connections have assigned nonnegative integer values. When the values associated with the connections can be any real number, then we  have 
{\it weighted networks}. See Aguiar, Dias and Ferreira~\cite{ADF17} and Aguiar and Dias~\cite{AD18}. In the context of weighted networks, it is common to assume that the coupled cell systems with structure consistent with the network have {\it additive input structure}, that is, the input to any cell is a sum of the {\it pairwise interactions} between the cell and the cells  connected to it, scaled by the weight of the connection. See, for example, Field~\cite{F15}, Bick and Field~\cite{BF17} and Newman~\cite{N10}. 

An important achievement in the two mentioned coupled cell systems formalisms is the characterization of the {\it synchrony spaces} for a network, the polydiagonals defined by equalities of cell coordinates, which are flow-invariant by any coupled cell system associated with  the network structure. Moreover, their existence and characterization relies only on the 
network structure. In fact, algorithms exist that determine the set of network synchrony spaces using solely the network adjacency matrix (or matrices in case there 
is more than one type of interactions between cells). See, for example, Aguiar and Dias~\cite{AD14}. Remark 2.11 of Aguiar and Dias~\cite{AD18} and Theorem 2.4 of Aguiar, Dias and Ferreira~\cite{ADF17} state that the same holds for weighted networks considering coupled cell systems with additive input structure.  In this work we consider a combination of the additive input structure of the coupled cell systems with restrictions on the internal dynamics and on the coupling functions and show that this  can lead to a drastic increase on the type of robust phenomena that the systems can exhibit. As it is widely known, the existence of robust flow invariant subspaces has a strong impact in the dynamics and favor the existence of non-generic dynamical behavior like robust heteroclinic cycles and networks and bifurcation phenomena. See, for example, Aguiar~\etal~\cite{AADF11}, Field~\cite{F15}, Golubitsky~\etal~\cite{GNS04} and Golubitsky and Lauterbach~\cite{GL09}.

Let $G$ be an $n$-cell  weighted network. Consider a coupled cell system  with additive input structure  associated with $G$ 
given by $\dot{x} = f(x)$, where $f = (f_1, \ldots, f_n)$ so that the equation  $\dot{x}_j = f_j(x)$ is associated with cell $j$ and it has the form: 
\begin{equation} 
\dot{x}_j=g(x_j) +\sum_{i=1}^n {w_{ji}h\left(x_j,x_i\right)}\, .
\label{eq:intro_EDOsystem}
\end{equation}
Here,   $g:\R \rightarrow \R$ and $h:\R \times \R \rightarrow \R$ are smooth functions and characterize the internal dynamics and the coupling function, respectively; 
each $w_{ji}\in \R$ is the value of the weight of the coupling strength of the directed edge from cell $i$ to cell $j$. If there is no directed edge from cell $i$ to cell $j$, the weight is assumed to be zero. Let $W_G = [w_{ij}]$ denote the $n \times n$ adjacency matrix of $G$. A polydiagonal space $\Delta$ is left invariant under any coupled cell system of the form (\ref{eq:intro_EDOsystem}) if and only if it is left invariant under $W_G$. See~\cite{GSP03, GST05, F05, ADF17,AD18}. 

In the literature, it is often common to assume, in addition to this additive input structure of the systems (\ref{eq:intro_EDOsystem}),  certain restrictions on the coupling function $h$. One usual restriction is 
\begin{equation}\label{eq:asump_h}
h(x,x) = 0,\, \forall x \in \R\, .
\end{equation}
Now observe  that $h$ satisfies the hypothesis (\ref{eq:asump_h}) if and only if  
\begin{equation}\label{eq:form_h}
h(x,y) = (x-y) h_1(x,y),\, \forall x,y \in \R,
\end{equation}
 for some smooth function $h_1:\, \R^2 \to \R$. Using this notation, equation (\ref{eq:intro_EDOsystem}) becomes
\begin{equation} 
\dot{x}_j=g(x_j) +\sum_{i=1}^n {w_{ji} (x_j - x_i) h_1\left(x_j,x_i\right)}\, .
\label{eq:3intro_EDOsystem}
\end{equation} 
Note that, for equations of the form (\ref{eq:3intro_EDOsystem}), we have the full synchronized space $\Delta_0 = \{ x:\, x_1 = x_2 = \cdots = x_n\}$, as $\Delta_0$ is left invariant under the flow for any choice of the functions $g$ and $h_1$.

Examples of coupled cell systems of the form (\ref{eq:3intro_EDOsystem}) are the {\it exo-difference-coupled cell systems} considered by Neuberger, Sieben, and Swift~\cite{NSS19} and the {\it diffusive networks} addressed for example by Poignard, Pade and Pereira~\cite{PPP19} where 
\begin{equation} \label{eq:dif}
h(x,y)  = H(x-y)
\end{equation}
for smooth $H:\, \R \to \R$ such that $H(0) = 0$. In  \cite{PPP19}  it is addressed the way the structure of the coupling structure of the graph affects the transverse stability of the full synchronized space $\Delta_0$. 

Note also that coupled cell systems where  cell equations have the form (\ref{eq:3intro_EDOsystem}) are consistent with the network obtained from $G$ by considering the entries $w_{jj}$ equal to zero, and so, a polydiagonal space $\Delta$ is left invariant under any coupled cell system of the form (\ref{eq:3intro_EDOsystem}) if and only if it is left invariant under the adjacency matrix obtained from $W_G$ by taking the diagonal entries equal to zero. Trivially, that is equivalent to checking if $\Delta$ is invariant under the Laplacian matrix $L_G$. Recall that, taking the adjacency matrix $W_G$ of the weighted network $G$, the {\it Laplacian matrix} associated with it is given by $L_G=D_G-W_G$, where $D_G$ is the diagonal matrix with the input valencies of the cells at the diagonal.

In Aguiar and Dias~\cite{AD18}, we consider synchronization for weighted networks and end with examples of flow-invariant subspaces whose definition includes, besides  cells coordinates that are equal, cells coordinates with the same magnitude but opposite sign.  Using the terminology of Neuberger, Sieben, and Swift~\cite{NSS19}, these are  {\it anti-synchrony subspaces}. In Neuberger, Sieben, and Swift~\cite{NSS19}, the authors consider four nested sets of difference-coupled systems, for $0-1$ undirected networks, by adding restrictions on the internal dynamics and coupling functions, and characterize the synchrony and anti-synchrony subspaces for a network with respect to those four subsets of admissible difference-coupled systems.
Anti-synchronization has been observed in different coupled cell systems scenarios, as coupled oscillators, Kim~\etal~\cite{KRKRP03}, neural networks,  Meng and Wang~\cite{MW07} and muti-agent systems, Hu and Zheng~\cite{HZ13}.

 Motivated  by the works of Aguiar and Dias~\cite{AD18} and Neuberger, Sieben, and Swift~\cite{NSS19}, we introduce the definition of generalized polydiagonal subspace as a space where the defining conditions, besides equalities of cell coordinates, can also include conditions such as $x_i = -x_j$ or $x_k =0$ and characterize the generalized polydiagonals that are left invariant by the adjacency matrix and/or by the Laplacian matrix of a weighted network. We then get the synchrony and anti-synchrony subspaces for different subclasses of the class of the input additive coupled cell systems of the network. Those subclasses are defined by considering restrictions on the internal dynamics and on the coupling functions, namely, to be odd, or even, or linear.  As more restrictions are imposed, more types of flow-invariant spaces can occur for any such systems.

We give next a very simple illustration of this by presenting a weighted network that has no robust synchrony spaces when taking general coupled cell systems with additive input structure but when a restriction is added on the coupling function, then the opposite occurs. 

\begin{figure}[!h]
\begin{tabular}{cc}
\begin{tikzpicture}
 [scale=.15,auto=left, node distance=1.5cm]
 \node[fill=magenta,style={circle,draw}] (n1) at (4,0) {\small{1}};
  \node[fill=magenta,style={circle,draw}] (n2) at (14,0) {\small{2}};
 \node[fill=white,style={circle,draw}] (n3) at (24,0)  {\small{3}};
 \draw[->, thick] (n1) edge[thick, bend left=30]  node  [near end, above=0.1pt] {{\tiny $2$}} (n2); 
 \draw[->, thick] (n2) edge[thick, bend left=30,]  node  [near end, below=0.1pt] {{\tiny $1$}} (n1); 
 \draw[->, thick] (n2) edge[thick] node  [near end, above=0.1pt] {{\tiny $3$}} (n3); 
\end{tikzpicture}  \qquad & \qquad 
\begin{tikzpicture}
 [scale=.15,auto=left, node distance=1.5cm]
 \node[fill=magenta,style={circle,draw}] (n1) at (4,0) {\small{$[1]$}};
  \node[fill=white,style={circle,draw}] (n3) at (14,0) {\small{$[3]$}};
 \draw[->, thick] (n1) edge[thick]  node  [near end, above=0.1pt] {{\tiny $3$}} (n3); 
 \end{tikzpicture} 
\end{tabular}
\caption{(Left) A three-cell network. (Right) A two-cell network.} \label{f:simples}
\end{figure}
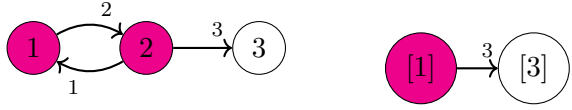

Coupled cell systems with additive input structure consistent with the three-cell network in Figure~\ref{f:simples} have the form:
$$
\left\{ 
\begin{array}{rcl}
\dot{x}_1 & = & g(x_1) +  h(x_1,x_2) \\
\dot{x}_2 & = & g(x_2) + 2 h(x_2,x_1) \\
\dot{x}_3 & = & g(x_3) + 3 h(x_3,x_2)
\end{array} \, .
\right.
$$
Note that for this network, there are no synchrony spaces. Assume now hypothesis (\ref{eq:form_h}). Thus we have the coupled cell system 
given by 
$$
\left\{ 
\begin{array}{rcl}
\dot{x}_1 & = & g(x_1) + (x_1 -x_2) h_1(x_1,x_2) \\
\dot{x}_2 & = & g(x_2) + 2 (x_2 - x_1) h_1(x_2,x_1) \\
\dot{x}_3 & = & g(x_3) + 3 (x_3 - x_1) h_1(x_3,x_2) 
\end{array} 
\right.
$$
and  $\Delta = \{ x:\, x_1 = x_2\}$ is left invariant under the flow of any such coupled cell system. Moreover, the restriction to $\Delta$ gives the 
coupled cell system 
$$
\left\{ 
\begin{array}{rcl}
\dot{x}_1 & = & g(x_1)  \\
\dot{x}_3 & = & g(x_3) + 3 (x_3 - x_1) h_1(x_3,x_1) 
\end{array} 
\right.
$$
which is consistent with the two-cell network in Figure~\ref{f:simples}. 

In this paper, we caracterize the set  of the synchrony and anti-synchrony subspaces of a general weighted network $G$, which correspond to the generalized polydiagonals invariant under the adjacency and/or Laplacian matrices of $G$. More precisely, the set of  synchrony and anti-synchrony subspaces of a general weighted network $G$ corresponding to the generalized polydiagonals that are flow-invariant by any coupled cell system with input additive structure that are linear-balanced, that is, those where the internal dynamics function is odd and the coupling function is odd and linear, are in correspondence with the generalized polydiagonals invariant under the network Laplacian matrix. See Proposition~\ref{prop:IGL}. The set of  synchrony and anti-synchrony subspaces of a general weighted network $G$ corresponding to the generalized polydiagonals that are flow-invariant by any coupled cell system with input additive structure that are even-odd-balanced, that is, those where the internal dynamics function is odd and the coupling function is even in the first variable and odd in the second variable, are in correspondence with the generalized polydiagonals invariant under the network adjacency matrix. See Proposition~\ref{prop:eoia}.

It follows so from our results that the caracterization of the set  of the synchrony and anti-synchrony subspaces of a general weighted network, for the above classes of coupled 
cell systems with input additive structure follows from the characterization of the generalized polydiagonals invariant under the adjacency and/or Laplacian matrices of the network. Using an extension of the results of Aguiar and Dias~\cite{AD14}, we characterize the  synchrony and anti-synchrony subspaces of a general weighted network by considering generalized polydiagonal subspaces and using the eigenvalue and eigenvector structures of the adjacency matrix and the Laplacian matrix. See Section~\ref{sec:algm}.

The paper is organized in the following way. In Section~\ref{sec:wn}, we establish notation and a few facts concerning weighted networks. In Section~\ref{sec_gen_tag} we introduce the definitions of generalized polydiagonal subspace and of the associated  tagged partition of the network set of cells. These include, as particular cases, the definitions of polydiagonal subspace and of the network set of cells associated partition. The characterization of the generalized polydiagonal subspaces that are left invariant by the adjacency matrix and/or the Laplacian matrix of a weighted network appears in Section~\ref{sec:gen_poly}. This is done through necessary and sufficient conditions on the blocks of any block structure of the weighted and Laplacian network matrices adapted to the generalized polydiagonal subspace and leads to the definition of several kinds of tagged partitions.
In Section~\ref{sec_CCNS}, we review coupled cell systems with additive input structure and, following the terminology in Neuberger, Sieben, and Swift~\cite{NSS19}, define subclasses of these coupled cell systems, namely, exo-input-additive, odd-input-additive, and linear-input-additive coupled cell systems.
We also define the class of even-odd-input-additive coupled cell systems.

 In Sections~\ref{sec_bal}-\ref{sec_eo_bal}, using the results obtained in Section~\ref{sec:gen_poly}, we characterize the synchrony and the anti-synchrony subspaces for weighted coupled cell networks depending on the additional restrictions imposed to their input-additive admissible coupled cell systems.
In Section~\ref{sec:algm}, we show, for the adjacency matrix and for the Laplacian matrix of a network, that the set of the generalized polydiagonals that are left invariant by the matrix is a lattice.  We show that the work in Aguiar and Dias~\cite{AD14} generalizes easily to the lattice of synchrony and anti-synchrony subspaces and how to apply the algorithm there to find these two lattices and, thus, the set of the synchrony and anti-synchrony subspaces of the network.
We endup with some conclusions in Section~\ref{sec:conclu}.

\section{Weighted networks}\label{sec:wn}

We consider {\it weighted networks}, that is, networks given by directed graphs where edges have associated weights, given by real values. 

If $G$ is an $n$-cell  weighted network, with set of cells $C=\{1,\ldots,n\}$, its  $n \times n$ {\it weighted adjacency matrix} is $W_G = [w_{ij}]_{1 \le i,j \le n}$, where $w_{ij}$ is the weight of the edge from cell $j$ to cell $i$ or zero if there is no such edge. The {\it input valency} of a cell $i \in C$, denoted by $v(i)$, is the sum of the weights of the edges directed to the cell $i$, that is, $v(i) = \sum_{j \in C} w_{ij}$. 

A network is said to be {\it regular} when all the network cells have the same input valency, that is, $v(i) = v(j)$, for all $i,j \in C$. When the network is regular, we have that its weighted matrix 
$W_G$ has constant row sum, say $v_W = \sum_{k=1}^n w_{ik}$, for $i= 1, \ldots, n$. In that case, we also say that $W_G$ is {\it regular} of {\it valency} $v_W$. 

\begin{Def} \normalfont 
Define the {\it row sum operator} ${\mathrm rs}:\, M_{s,t}(\R) \to M_{s,1}(\R)$ which maps an $s\times t$ matrix $M$ to the $s \times 1$ column matrix where the 
$i$-th entry is the sum of the entries of the $i$-th row of $M$, for $i=1, \ldots, s$.
\hfill $\Diamond$
\end{Def}

\begin{rem}\normalfont 
Note that $\left({\mathrm rs}\left( W_G\right)\right)_i = v(i)$, for $W_G$, the weighted adjacency matrix of a weighted network $G$ with set of cells $C$, and $v(i)$, the input valency of cell $i \in C$.

\hfill $\Diamond$
\end{rem}

We recall that, given an $n$-cell  weighted network $G$ with adjacency matrix $W_G=[w_{i j}]_{n\times n}$, the corresponding {\it Laplacian matrix} is given by $L_G=D_G-W_G$, where $D_G$ is the diagonal matrix with the principal diagonal given by the entries of ${\mathrm rs}\left( W_G\right)$, that is,  the input valencies of the cells at the diagonal. The Laplacian matrix associated with $G$ is the regular $n \times n$ matrix $L_G = [l_{ij}]_{n\times n}$ defined by: 
$$l_{ij} = 
\left\{ 
\begin{array}{l}
-w_{ij} \mbox{  if }  i\not=j; \\
v(i) - w_{ii} \mbox{  if }  i=j\, .
\end{array}
\right.
$$

\begin{rem}\normalfont \label{rmk:Laplacian}
The Laplacian matrix $L_G$ of a weighted network $G$ is regular with valency $0$. Thus, it can be seen as the weighted adjacency matrix of another weighted network,  which is regular as the input valency of each cell is zero. 
\hfill $\Diamond$
\end{rem}

\begin{Def}
Given a weighted network $G$, we denote by  $G_L$ the regular weighted network with  weighted adjacency matrix the Laplacian matrix $L_G$ of $G$. 
\hfill $\Diamond$
\end{Def}

\section{Generalized polydiagonals and tagged partitions}\label{sec_gen_tag}

A polydiagonal subspace $\Delta$ of $\R^n$ is a subspace of $\R^n$ characterized by equalities of the form $x_i = x_j$ where $x_i,x_j$ denote coordinates of  cells $i,j$. Such 
polydiagonal  can be associated with a partition of $C = \{ 1, \ldots, n\}$ into the disjoint union of a certain number of nonempty parts with union $C$ and such that $i,j$ belong to the same part if and only if $x_i = x_j$ is a condition in the definition of $\Delta$. In this section, we generalize the notion of polydiagonal subspace of $\R^n$ to include possibly equalities of the form $x_k = -x_l$ or $x_m = 0$.

\begin{Def} \label{def:tag_part} \normalfont   Let  $C = \{ 1, \ldots, n\}$ and $p,q,r$ be nonnegative integers, where $0\leq q\leq p \leq n$ and $r \in \{0,1\}$. \\
(i) A  {\it tagged partition} of $C$ determined by $p,q,r$ is a partition of $C$ into the disjoint union of $p+q+r$ parts,  $P_k$, for $k=1,\ldots,p$ if $p>0$, $\overline{P}_l$, for $l =1,\ldots,q$ if $q>0$ and $P_0$ if $r=1$. If $q > 0$, then  for $l =1,\ldots,q$, each part $\overline{P}_l$ is the {\it counterpart} of $P_l$. If $r =1$ then the part $P_0$ is called the {\it zero part}. If $r=0$ then there is no zero part. \\
(ii) A tagged partition with no counterparts and no zero part, that is, a tagged partition determined by $p,q,r$ where $q=0, r=0$,  it is called a {\it standard partition} of $C$ into disjoint $p$ parts, $P_1, \ldots, P_p$. \\
(iii) There is a unique tagged partition such that $p=0$ which we call the {\it null partition}. In that case, $q=0$ and $r=1$ and it corresponds to the partition of $C$ with only the zero part, that is, $P_0 = C$.
 \hfill $\Diamond$
\end{Def}
 
 We can associate with a tagged partition, a subspace of $\R^n$ in the following way:
 
\begin{Def} \label{def:gen_poly} \normalfont   
(i) Given a  tagged partition $\mathcal{P}$  of $C = \{ 1, \ldots, n\}$,  a {\it generalized polydiagonal  subspace} of $\R^n$ is a subspace of the form
{\small
$$
\hspace{-2mm} \begin{array}{l}
\Delta_{\mathcal{P}} =   \left\{ x \in \R^n:\  x_j = x_i  \ (x_j = -x_i) \mbox{ if  $j$ is in the same part of $i$ (if  $j$ is in the counterpart of $i$)}; \right. \\
  \left. \qquad   \qquad  \qquad  \ \ x_j=0  \mbox{ if $j$ is in the zero part}\right\}\, .
 \end{array}
$$
} \\
(ii) A {\it polydiagonal  subspace} of $\R^n$ is a particular case of a generalized polydiagonal subspace associated with a standard partition of $C$. \\
(iii) The {\it null subspace}  $\{ (0, \ldots, 0)\}$ of $\R^n$ is the generalized polydiagonal subspace associated with  the null partition of $C$.
\hfill $\Diamond$
\end{Def}

\begin{exam} \normalfont Consider the following tagged partitions of $C = \{1,2,3,4,5\}$:
$$
\begin{array}{l}
\mathcal{P}_1 = \left\{ P_1 = \{1,2\},\, P_2 = \{3\},\, \overline{P}_1 = \{4\}, P_0 = \{5\} \right\}, \\
\mathcal{P}_2 = \left\{ P_1 = \{1,2\},\, P_2 = \{3\},\, P_3 = \{5\},\,  \overline{P}_1 = \{4\} \right\}, \\
\mathcal{P}_3 = \left\{ P_1 = \{1,2\},\, P_2 = \{3\},\, P_3 = \{4\},\,  P_0 = \{5\} \right\}, \\
\mathcal{P}_4 = \left\{ P_1 = \{1,2\},\, P_2 = \{3\},\, P_3 = \{4,5\} \right\}\, . 
\end{array}
$$
The associated generalized polydiagonal subspaces are:
$$
\begin{array}{l}
\Delta_{\mathcal{P}_1} = \left\{ x \in \R^5:\, x_1 = x_2 = -x_4,\, x_5=0 \right\}, \ 
\Delta_{\mathcal{P}_2} = \left\{ x \in \R^5:\, x_1 = x_2\ = -x_4 \right\}, \\
\Delta_{\mathcal{P}_3} = \left\{ x \in \R^5:\, x_1 = x_2,\, x_5 = 0 \right\}, \ 
\Delta_{\mathcal{P}_4} = \left\{ x \in \R^5:\, x_1 = x_2,\, x_4 = x_5 \right\}\, . 
\end{array}
$$
\hfill $\Diamond$
\end{exam}

\begin{Def} \label{def:inputvalency} \normalfont   
Let $G$ be a coupled cell network with set of cells $C$ and $\mathcal{P}$ a tagged partition of $C$. We denote by 
$v_{P} (i)$ the {\it input valency of a cell $i$ relative to the part $P \in \mathcal{P}$}, given by
$$
v_{P} (i) = \sum_{j \in P} w_{ij}.
$$
\hfill $\Diamond$
\end{Def}

We call the {\it input relation} on $G$, the equivalence relation corresponding to the partition of the network set of cells where two cells are in the same part if and only if they have the same input valency.

\begin{Def} \label{def:partition} \normalfont   
Let $G$ be a weighted  network with set of cells $C=\{ 1, \ldots, n\}$ and  $\mathcal{P} = \{ P_1, P_2, \ldots, P_p, \overline{P}_1, \overline{P}_2, \ldots, \overline{P}_q, P_0\}$ a tagged partition of $C$. Cells in $C$ can be  renumbered, if necessary, so that the $n \times n$ adjacency matrix $W_G$  (resp. Laplacian matrix $L_G$) of $G$ have a block form where each submatrix of $W_G$ (resp. $L_G$) represents the edges between the cells of two parts of $\mathcal{P}$: 
\begin{equation}
\left( 
 \begin{array}{cccc|cccc|c}
Q_{11} & Q_{12} &  \cdots &Q_{1p} & 
R_{11} & R_{12} &  \cdots &R_{1q} & Z_{10} \\
 \vdots& \vdots & \cdots &\vdots & 
 \vdots& \vdots & \cdots &\vdots & \vdots \\
Q_{p1} & Q_{p2} & \cdots &Q_{pp} &
R_{p1} & R_{p2} &  \cdots &R_{pq} & Z_{p0} \\
& & & & & & & & \\
\hline 
& & & & & & & & \\
\overline{R}_{11} & \overline{R}_{12} &  \cdots &\overline{R}_{1p} & 
\overline{Q}_{11} & \overline{Q}_{12} &  \cdots &\overline{Q}_{1q} & \overline{Z}_{10} \\
 \vdots& \vdots & \cdots &\vdots & 
 \vdots& \vdots & \cdots &\vdots & \vdots \\
\overline{R}_{q1} & \overline{R}_{q2} & \cdots &\overline{R}_{qp} &
\overline{Q}_{q1} & \overline{Q}_{q2} &  \cdots &\overline{Q}_{qq} &\overline{Z}_{q0} \\
& & & & & & & & \\
\hline 
& & & & & & & & \\
Z_{01} & Z_{02} & \cdots &Z_{0p} &
\overline{Z}_{01} & \overline{Z}_{02} &  \cdots &\overline{Z}_{0q} & Z_{00} \\
 \end{array}
 \right)\, .
 \label{eq:oddbf}
 \end{equation}
 Thus, matrices $Q_{ij}$, $R_{ij}$ and $Z_{i0}$ represent the connections to part $P_i$ from parts $P_j$, $\overline{P}_j$ and $P_0$, respectively. Matrices $\overline{R}_{ij}$, $\overline{Q}_{ij}$ and $\overline{Z}_{i0}$ represent the connections to part $\overline{P}_i$ from parts $P_j$, $\overline{P}_j$, and $P_0$, respectively. Matrices $Z_{0j}$, $\overline{Z}_{0j}$ and $Z_{00}$ represent the connections to part $P_0$ from parts $P_j$, $\overline{P}_j$ and $P_0$, respectively. We say that the {\it enumeration of the network set of cells is adapted to the (tagged) partition} $\mathcal{P}$. 
\hfill $\Diamond$ 
\end{Def}

\begin{rem}\normalfont \label{rmk_block_match} 
The row sum operator can be applied to each block matrix in 
(\ref{eq:oddbf}) of Definition~\ref{def:partition}. If $B$ is a block matrix representing  the connections to part $L$ from part $H$, we have that 
$$
v_{H} (k) = \left( {\mathrm rs} \left( B \right) \right)_k , \quad k \in L\, .
$$
\hfill $\Diamond$
\end{rem}

\section{Generalized polydiagonals invariant by the adjacency matrix and/or by the Laplacian matrix of a weighted network}\label{sec:gen_poly}

\begin{prop} \label{prop:subset}
Let $G$ be a weighted coupled $n$-cell network with adjacency matrix $W_G$ and Laplacian matrix $L_G$. We have:\\
(i) The set of the generalized polydiagonal  subspaces that are invariant by the adjacency matrix $W_G$ coincides with the set of the generalized polydiagonal  subspaces that are invariant by the Laplacian matrix $L_G$ if and only if  if $G$ is regular.\\
(ii) If $G$ is not regular, the set of the polydiagonal  subspaces that are invariant by the adjacency matrix $W_G$ is strictly contained in the set of the polydiagonal  subspaces that are invariant by the Laplacian matrix $L_G$ .
\end{prop}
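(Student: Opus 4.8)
The plan is to reduce everything to two elementary facts. The first is that $L_G = D_G - W_G$ with $D_G$ the diagonal matrix of input valencies, so that $G$ is regular --- of some valency $v_W$ --- exactly when $D_G = v_W I$, in which case $L_G = v_W I - W_G$ and $W_G = v_W I - L_G$. The second is that the all-ones vector $\mathbf{1} = (1,\dots,1)$ lies in every polydiagonal subspace and satisfies $W_G\mathbf{1} = \mathrm{rs}(W_G) = (v(1),\dots,v(n))$, whereas $L_G\mathbf{1} = 0$ because $L_G$ has zero row sums (Remark~\ref{rmk:Laplacian}). In particular the full synchrony subspace $\Delta_0 = \{x \in \R^n : x_1 = \cdots = x_n\} = \mathrm{span}(\mathbf{1})$ is always $L_G$-invariant, and it is $W_G$-invariant if and only if the $v(i)$ are all equal, that is, if and only if $G$ is regular.

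For part~(i), the ``if'' direction is then immediate: when $G$ is regular, $W_G$ and $L_G$ differ by a scalar multiple of the identity, so any subspace of $\R^n$ --- in particular any generalized polydiagonal --- is invariant under one precisely when it is invariant under the other, and the two sets coincide. For the ``only if'' direction I would argue by contraposition: if $G$ is not regular, then $\Delta_0$ is a generalized polydiagonal that is $L_G$-invariant but not $W_G$-invariant, so the two sets differ. (Alternatively, this also follows from part~(ii).)

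For part~(ii) I would first establish the inclusion. Let $\Delta = \Delta_{\mathcal{P}}$ be a polydiagonal, associated with a standard partition $\mathcal{P} = \{P_1,\dots,P_p\}$, and suppose $\Delta$ is $W_G$-invariant. Since $\mathbf{1}\in\Delta$, we get $W_G\mathbf{1} = (v(1),\dots,v(n))\in\Delta$, which forces $v(i) = v(j)$ whenever $i$ and $j$ lie in the same part of $\mathcal{P}$. Hence $D_G$ preserves $\Delta$ --- on $x\in\Delta$ its $i$-th output coordinate is $v(i)x_i$, which is constant along each part --- so $L_G = D_G - W_G$ preserves $\Delta$ as well, $\Delta$ being a subspace. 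Strictness is then clear: with $G$ not regular, $\Delta_0$ is a polydiagonal lying in the $L_G$-invariant family but not in the $W_G$-invariant one.

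The whole argument is elementary linear algebra and I do not expect a genuine obstacle; the one step that deserves care is the inclusion in~(ii), where the point is that membership of $\mathbf{1}$ in $\Delta$ is what forces the input valencies to be constant on the parts. This is also precisely why~(ii) is stated for polydiagonals and not for generalized polydiagonals: once $\Delta$ carries a condition $x_i = -x_j$ or $x_m = 0$ one has $\mathbf{1}\notin\Delta$, the argument fails, and in fact a $W_G$-invariant generalized polydiagonal need not be $L_G$-invariant --- consistently with the ``coincides'' in~(i) being restricted to regular $G$.
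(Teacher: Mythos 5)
Your proposal is correct and follows essentially the same route as the paper: the decomposition $L_G = D_G - W_G$ with $D_G = v_W I$ in the regular case, the observation that $W_G$-invariance of a polydiagonal forces equal input valencies within each part (so $D_G$, hence $L_G$, preserves it), and the full synchrony space $\Delta_0$ as the witness of strictness when $G$ is not regular. Your explicit use of the all-ones vector merely spells out a step the paper states without detail, so there is no substantive difference.
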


\begin{proof} 
(i) We have that $L_G = D_G - W_G$. Moreover, $G$ is a regular network with valency $v_w$ if and only if  $D_G = v_w I$, with $I$ the identity matrix of order $n$. In that case, we have that 
a space is invariant under $W_G$ if and only if it is invariant under $L_G$. In particular, that holds for   invariant generalized polydiagonal  spaces. If $G$ is not regular, then at least 
the diagonal space $\{ x:\, x_i = x_j, \mbox{ for all } i,j\}$ is invariant under $L_G$ but  not under $W_G$. Thus there is at least one generalized polydiagonal that is left invariant under $L_G$ but not under $W_G$. \\
(ii) Given a polydiagonal  subspace $\Delta$, consider the associated (standard) partition $\mathcal{P}$. That is, $\Delta=\Delta_{\footnotesize{\mathcal{P}}}$.
If $\Delta_{\footnotesize{\mathcal{P}}}$ is invariant by the adjacency matrix $W_G$, we have that any two cells $i,j$ in the same part have the same input valency $v (i) = v (j)$. It follows that the entries $ii$ and $jj$ of the diagonal matrix $D_G$ are equal and thus, that $\Delta_{\footnotesize{\mathcal{P}}}$ is also left invariant by $D_G$ and, consequently, by $L_G$. As already mentioned in the proof of (i), since the Laplacian matrix $L_G$ is regular, the polydiagonal subspace where all the variables are identified is always invariant by $L_G$ but it is not invariant by $W_G$, in the case where $G$ is not regular. Thus, when $G$ is not regular, the set of the polydiagonals invariant by $W_G$ is strictly contained in the set of the  polydiagonals invariant by $L_G$.
\end{proof}

\begin{rem}\normalfont 
Given Proposition~\ref{prop:subset} (ii) we can ask, when $G$ is not a regular network, if the set of the generalized polydiagonal  subspaces that are invariant by the adjacency matrix $W_G$ is contained in the set of the generalized polydiagonal  subspaces that are invariant by the Laplacian matrix $L_G$. The following example shows that there can be generalized polydiagonal  subspaces that are invariant by the adjacency matrix $W_G$ but not by the Laplacian matrix $L_G$. 
\hfill $\Diamond$
\end{rem}

\begin{exam} \label{ex:notequal}		
Let $G$ be the four-cell non-regular network in Figure~\ref{fig:notequal} with adjacency matrix 
$$
W_{G} =
	\left( 
 \begin{array}{cc|cc}
3 & 1 & 1& 1  \\
1 & 1 & 0& 0  \\
\hline 
0 & 0 & 5& -3  \\
4 & 2 & 5& 3 
 \end{array}
 \right) = 
 \displaystyle 
\left( 
 \begin{array}{c|c}
 Q_{11} & R_{11} \\
 \hline 
 \overline{R}_{11} & \overline{Q}_{11} 
 \end{array}
 \right)
 \, .
 $$	
 
 The generalized polydiagonal  subspace 
 $$\Delta_{\footnotesize{\mathcal{P}}}=\{ x \in \R^4: \ x_1 = x_2=-x_3=-x_4 \}
 $$
  is left invariant by the adjacency matrix $W_{G}$ but not by the Laplacian matrix
 $$
L_{G} =
	\left( 
 \begin{array}{cc|cc}
3 & -1 & -1& -1  \\
-1 & 1 & 0& 0  \\
\hline 
0 & 0 & -3& 3  \\
-4 & -2 & -5& 11 
 \end{array}
 \right) 
 \, .
 $$	
\hfill $\Diamond$
\end{exam}

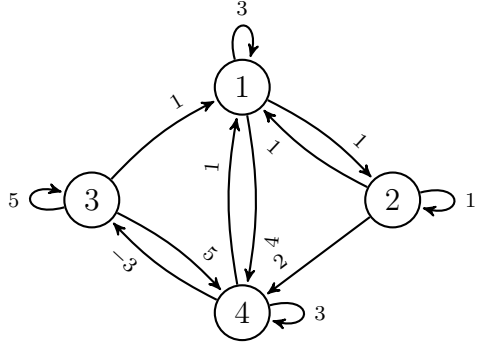
\begin{figure}[h!]
\begin{center}
\begin{tikzpicture}[->,>=stealth',shorten >=1pt,auto, node distance=1.5cm, thick,node/.style={circle,draw}]
                           \node[node]	(1) at (-5cm, 4cm)  [fill=white!60] {$1$};
			\node[node]	(4) at (-5cm, 1cm)  [fill=white!20] {$4$};
			\node[node]	(3) at (-7cm, 2.5cm) [fill=white] {$3$};
			\node[node]	(2) at (-3cm, 2.5cm)  [fill=white] {$2$};

                                                   \path
                                                   (1) edge [loop above] node {{\tiny $3$}} (1)   
				(1) edge [bend left=10, thick] node [pos=0.75, sloped, above] {{\tiny $1$}} (2)  
				(1) edge[bend left=10, thick] node [pos=0.75, sloped, below]  {{\tiny $4$}} (4) 
				 (2) edge [loop right] node {{\tiny $1$}} (2)                                                          
				(2) edge [bend left=10, thick] node  [pos=0.75, sloped, below]  {{\tiny $1$}} (1)
                                  (2) [->] edge node  [pos=0.75, sloped, above]  {{\tiny $2$}}   (4)  
                                  (3) edge [loop left] node {{\tiny $5$}} (3)
				(3) edge[bend left=10, thick]   node  [pos=0.75, sloped, above] {{\tiny $1$}} (1)
				(3) edge [bend left=10, thick] node [pos=0.75, sloped, above] {{\tiny $5$}} (4)
				(4) edge [bend left=10, thick]  node  [pos=0.7, sloped, above] {{\tiny $1$}} (1)
		                 (4) edge [loop right] node {{\tiny $3$}} (4)	
   			(4) edge [bend left=10, thick]  node  [pos=0.75, sloped, below]  {{\tiny $-3$}} (3);    
		\end{tikzpicture}	

		\end{center}
		\caption{The four-cell weighted network $G$ in Example~\ref{ex:notequal}.}
		\label{fig:notequal}
	\end{figure}

In the next result, we caracterize, for a weighted network, the generalized polydiagonals that are invariant under its adjacency matrix (resp. Laplacian matrix).

\begin{prop}\label{thm:mainLaplacian}
Let $G$ be a weighted network with set of cells $C = \{ 1, \ldots, n\}$ and $\Delta_{\mathcal{P}}$ a non-null generalized polydiagonal subspace of $\R^n$ where  $\mathcal{P}$ is a tagged partition $\{  P_1, P_2, \ldots, P_p, \overline{P}_1, \overline{P}_2, \ldots, \overline{P}_q, P_0 \}$ of $C$.  Consider an enumeration of  $C$ adapted to the partition $\mathcal{P}$. The adjacency matrix $W_G$ (resp. the  Laplacian matrix $L_G$) of $G$ leaves invariant the generalized polydiagonal $\Delta_{\mathcal{P}}$  if and only if the  block structure  (\ref{eq:oddbf}) of $W_G$  (resp.  $L_G$) satisfies the following conditions: \\
\ \\
When $q>0$:
\begin{equation}\label{eq:equal_Lap}
{\small \begin{array}{l}
\left\{
\begin{array}{ll}
{\mathrm rs}\left(Q_{ij}\right) - {\mathrm rs}\left( R_{ij}\right),\  {\mathrm rs}\left(\overline{Q}_{ij}\right) - {\mathrm rs}\left( \overline{R}_{ij}\right)  \mbox{ are regular of the same valency} & \left( 1 \leq i, j \leq q \right);\\
{\mathrm rs}\left(Q_{ij}\right), \ -{\mathrm rs}\left( \overline{R}_{ij}\right)  \mbox{ are regular of the same valency} & \left( 1 \leq i \leq q;\   q+1 \leq j \leq p \right); \\
{\mathrm rs}\left(Q_{ij}\right) - {\mathrm rs}\left( R_{ij}\right)  \mbox{ is regular} &\left(q+1 \leq  i \leq p,\  1 \leq  j \leq  q\right);\\
Q_{ij} \mbox{ is regular} & \left(q+1 \leq i, j \leq p \right); \\
\mbox{If } r=1 \mbox{ then }  {\mathrm rs}\left(Z_{0j}\right)  = {\mathrm rs}\left(\overline{Z}_{0j}\right) & \left( 1 \leq  j \leq q\right); \\
\qquad \qquad  \qquad {\mathrm rs}\left(Z_{0j}\right)  =  0 & \left( q+1 \leq  j \leq p \right).
\end{array}
\right.
\end{array}}
\end{equation}
\ \\
When $q=0$:
\begin{equation}\label{eq:equal_Lap_Z}      
\begin{array}{l}
\left\{
\begin{array}{ll}
Q_{ij} \mbox{ is regular} & \left( 1 \leq i,j \leq p \right);\\
\mbox{If } r=1 \mbox{ then } {\mathrm rs}\left(Z_{0j}\right)  =  0 & \left( 1 \leq j \leq p \right).
\end{array}
\right.
\end{array}
\end{equation}
\end{prop}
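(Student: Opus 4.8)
The plan is to reduce the statement to a single linear-algebra computation and then apply it twice. Both $W_G$ and $L_G$ carry the block decomposition (\ref{eq:oddbf}) with respect to the chosen adapted enumeration, and nothing below uses any property of $W_G$ or $L_G$ beyond being an $n\times n$ real matrix with that block form; so I would fix an arbitrary such matrix $M$ and afterwards specialize to $M=W_G$ and $M=L_G$. Since $\Delta_{\mathcal P}$ is non-null we have $p\ge 1$, and a vector $x\in\Delta_{\mathcal P}$ is uniquely encoded by a tuple $(a_1,\dots,a_p)\in\R^p$ via $x_c=a_k$ for $c\in P_k$, $x_c=-a_l$ for $c\in\overline{P}_l$, and $x_c=0$ for $c\in P_0$. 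The first step is to write $y:=Mx$ component-wise in the $a_j$. Splitting the sum $y_c=\sum_d m_{cd}x_d$ according to which part the column index $d$ lies in, and using Remark~\ref{rmk_block_match} to identify partial row sums of $M$ with row sums of the blocks of (\ref{eq:oddbf}), one gets, for $c\in P_i$, $y_c=\sum_{j=1}^{q}\big(({\mathrm rs}\,Q_{ij})_c-({\mathrm rs}\,R_{ij})_c\big)a_j+\sum_{j=q+1}^{p}({\mathrm rs}\,Q_{ij})_c\,a_j$, together with the analogous formula for $c\in\overline{P}_i$ (with $Q,R$ replaced by $\overline{R},\overline{Q}$, the extra sign coming from $x_d=-a_l$ on $\overline{P}_l$) and for $c\in P_0$ (with $Z_{0j},\overline{Z}_{0j}$).

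Next I would translate ``$y\in\Delta_{\mathcal P}$'' into conditions on coefficients. The requirement $Mx\in\Delta_{\mathcal P}$ for every $x\in\Delta_{\mathcal P}$ means exactly that, for all $(a_1,\dots,a_p)$: (a) $y_c$ is independent of $c$ within each $P_i$; (b) $y_c$ is independent of $c$ within each $\overline{P}_i$; (c) $y_{c'}=-y_c$ whenever $c\in P_i$ and $c'\in\overline{P}_i$; and (d) $y_c=0$ for $c\in P_0$. Each of (a)--(d) is a family of linear identities in $a_1,\dots,a_p$ that must hold identically in $a$, hence is equivalent to equality of the matching coefficients. Reading those coefficients off the formulas of the first step, and using the dictionary ``the vector ${\mathrm rs}(B)$ is constant with value $v$ on its index set'' $\Longleftrightarrow$ ``$B$ is regular of valency $v$'', conditions (a)--(d) become precisely the system (\ref{eq:equal_Lap}) when $q>0$: the first line is the conjunction of (a), (b), (c) for $1\le i,j\le q$; the second line is (a), (b), (c) for $1\le i\le q$, $q+1\le j\le p$; the third and fourth lines are (a) alone for the row-blocks $P_i$ with $q+1\le i\le p$ (where no counterpart row-block exists, so (b) and (c) impose nothing); and the last two lines are (d). When $q=0$ there are no counterpart parts, (b) and (c) are vacuous, and what is left of (a) and (d) is exactly (\ref{eq:equal_Lap_Z}).

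Both implications then fall out of this equivalence: sufficiency, because the coefficient equalities force each $y_c$ to have the prescribed value; necessity, because $\Delta_{\mathcal P}$ contains $x(a)$ for every $a\in\R^p$, so the identities hold for all $a$ and hence coefficient-wise. I expect the only real work to be bookkeeping: tracking, for each ordered pair of parts, which block of (\ref{eq:oddbf}) governs the connections, which sign the corresponding $a_j$ carries (a minus sign exactly when the source part is a counterpart $\overline{P}_l$), and which index ranges possess a counterpart row-block --- this last point being what separates ``regular of the same valency'' in lines~1--2 (where (c) links a $P_i$-block to its $\overline{P}_i$-block) from the bare ``regular'' of lines~3--4. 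One detail to state carefully is that (c), together with (a) and (b), is what forces the two relevant constants to be negatives of each other, which --- once the defining sign of $R_{ij}$ against $Q_{ij}$ is absorbed --- is recorded as ``the same valency'' in (\ref{eq:equal_Lap}).
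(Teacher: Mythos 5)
Your proposal is correct and is essentially the paper's own argument: the paper likewise applies the block matrix (\ref{eq:oddbf}) to a generic element of $\Delta_{\mathcal{P}}$ (with block coordinates $X_1,\dots,X_p,-X_1,\dots,-X_q,\mathbf{0}$) and reads off membership in $\Delta_{\mathcal{P}}$ as exactly the row-sum/regularity conditions (\ref{eq:equal_Lap}) and (\ref{eq:equal_Lap_Z}), which is your coefficient-wise bookkeeping in $(a_1,\dots,a_p)$. Your sign analysis (lines 1--2 arising from linking a part's row-block to its counterpart's, lines 3--4 from rows without counterparts, lines 5--6 from the zero part) matches the paper's cases (i)--(iii).
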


\begin{proof} 
Assume the tagged partition $\mathcal{P}$ has $q>0$. Denote by $X_i$, for $1\leq i\leq p$ (resp. $-X_i$, for $1\leq i \leq q$), the coordinates corresponding to the cells in part $P_i$ (resp. $\overline{P}_i$). Applying the matrix  $W_G$  (resp.  $L_G$) with block structure  (\ref{eq:oddbf}) to the column vector 
$X = \left(X_1, \ldots, X_q, X_{q+1}, \ldots, X_p, -X_1, \ldots, -X_q, {\bf 0} \right) \in \Delta_{\mathcal{P}}$, where cell coordinates corresponding to the zero part $P_0$ are set to zero (in case $r=1$), we obtain a column vector and corresponding properties in order to belong to $\Delta_{\mathcal{P}}$:\\
(i)  The components corresponding to the cells in the $q$ parts $P_1, \ldots, P_q$ are the entries of the column vector
$$
\begin{array}{l}
\left( 
 \begin{array}{cccc}
Q_{11} & Q_{12} &  \cdots &Q_{1q}  \\
 \vdots& \vdots & \cdots &\vdots  \\
Q_{q1} & Q_{q2} & \cdots &Q_{qq} 
\end{array}
\right)
\left( 
\begin{array}{c}
X_1\\
\vdots \\
X_q
\end{array}
\right) 
+  
\left( 
 \begin{array}{cccc}
R_{11} & R_{12} &  \cdots &R_{1q} \\
 \vdots& \vdots & \cdots &\vdots  \\
R_{q1} & R_{q2} & \cdots &R_{qq} 
\end{array}
\right)
\left( 
\begin{array}{c}
-X_1\\
\vdots \\
-X_q
\end{array}
\right) 
\\
 \\
+ 
\left( 
 \begin{array}{cccc}
Q_{1,q+1} & Q_{1,q+2} &  \cdots &Q_{1p} \\
 \vdots& \vdots & \cdots &\vdots  \\
Q_{q,q+1} & Q_{q,q+2} & \cdots &Q_{qp} 
\end{array}
\right)
\left( 
\begin{array}{c}
X_{q+1}\\
\vdots \\
X_p
\end{array}
\right); 
\end{array}
$$
\noindent The components corresponding to the cells in the counterparts $\overline{P}_1, \ldots, \overline{P}_q$ are the entries of the column vector: 
$$
\begin{array}{l}
\left( 
 \begin{array}{cccc}
\overline{R}_{11} & \overline{R}_{12} &  \cdots &\overline{R}_{1q} \\
 \vdots& \vdots & \cdots &\vdots  \\
\overline{R}_{q1}  & \overline{R}_{q2} & \cdots &\overline{R}_{qq} 
\end{array}
\right)
\left( 
\begin{array}{c}
X_1\\
\vdots \\
X_q
\end{array}
\right) 
+
\left( 
 \begin{array}{cccc}
 \overline{Q}_{11} & \overline{Q}_{12} &  \cdots & \overline{Q}_{1q} \\
 \vdots& \vdots & \cdots &\vdots  \\
\overline{Q}_{q1} &  \overline{Q}_{q2} & \cdots &\overline{Q}_{qq} 
\end{array}
\right)
\left( 
\begin{array}{c}
-X_1\\
\vdots \\
-X_q
\end{array}
\right) \\
\\
+
\left( 
 \begin{array}{cccc}
\overline{R}_{1,q+1} & \overline{R}_{1,q+2} &  \cdots &\overline{R}_{1p} \\
 \vdots& \vdots & \cdots &\vdots  \\
\overline{R}_{q,q+1} & \overline{R}_{q,q+2} & \cdots &\overline{R}_{qp} 
\end{array}
\right)
\left( 
\begin{array}{c}
X_{q+1}\\
\vdots \\
X_p
\end{array}
\right);
\end{array}
$$
We have so that  ${\mathrm rs}\left( Q_{ij} \right)-  {\mathrm rs}\left( R_{ij} \right) = {\mathrm rs} \left( \overline{Q}_{ij} \right) - {\mathrm rs} \left( \overline{R}_{ij} \right)$, for $1 \leq i,j \leq q$. Similarly, ${\mathrm rs}\left(  Q_{i,j} \right) = 
-{\mathrm rs} \left(  \overline{R}_{i,j} \right)$, for $1 \leq i \leq q$ and $q+1 \leq j \leq p$. Moreover, all these column vectors (of the form ${\mathrm rs} (M)$) have constant entries, that is, they are regular. \\
\noindent (ii) The components corresponding to the cells in the parts $P_{q+1}, \ldots, P_p$ are the entries of the column vector: 
$$
\begin{array}{l}
\left( 
 \begin{array}{cccc}
Q_{q+1,1} & Q_{q+1,2} &  \cdots &Q_{q+1,q}  \\
 \vdots& \vdots & \cdots &\vdots  \\
Q_{p1} & Q_{p2} & \cdots &Q_{pq} 
\end{array}
\right)
\left( 
\begin{array}{c}
X_1\\
\vdots \\
X_q
\end{array}
\right) 
+
\left( 
 \begin{array}{cccc}
R_{q+1,1} & R_{q+1, 2} &  \cdots &R_{q+1,q} \\
 \vdots& \vdots & \cdots &\vdots  \\
R_{p1} & R_{p2} & \cdots &Q_{pq} 
\end{array}
\right)
\left( 
\begin{array}{c}
-X_1\\
\vdots \\
-X_q
\end{array}
\right) \\
\\
+ 
\left( 
 \begin{array}{cccc}
Q_{q+1,q+1} & Q_{q+1,q+2} &  \cdots &Q_{q+1p} \\
 \vdots& \vdots & \cdots &\vdots  \\
Q_{p,q+1} & Q_{p,q+2} & \cdots &Q_{pp} 
\end{array}
\right)
\left( 
\begin{array}{c}
X_{q+1}\\
\vdots \\
X_p
\end{array}
\right);
\end{array} 
$$
\noindent Thus ${\mathrm rs} \left( Q_{ij} \right) - {\mathrm rs} \left( R_{ij} \right)$ is regular, for $q+1 \leq i \leq p$ and $1 \leq j \leq q$. Also, $Q_{i,j}$ is regular for 
$q+1 \leq i,j  \leq p$. \\
\noindent (iii Finally, the components corresponding to the cells in the part $P_0$ in case $r=1$ are the entries of the column vector: 
$$
\begin{array}{l}
\left( 
 \begin{array}{cccc}
Z_{01}  & Z_{02} &  \cdots &Z_{0q} 
\end{array}
\right)
\left( 
\begin{array}{c}
X_1\\
\vdots \\
X_q
\end{array}
\right) 
+
\left( 
 \begin{array}{cccc}
  \overline{Z}_{01} & \overline{Z}_{02} &  \cdots & \overline{Z}_{0q} 
\end{array}
\right)
\left( 
\begin{array}{c}
-X_1\\
\vdots \\
-X_q
\end{array}
\right) \\
\\ 
+
\left( 
 \begin{array}{cccc}
Z_{0,q+1} & Z_{0,q+2} &  \cdots &Z_{0p} 
\end{array}
\right)
\left( 
\begin{array}{c}
X_{q+1}\\
\vdots \\
X_p
\end{array}
\right)\, .
\end{array}
$$
Thus,  ${\mathrm rs}\left(Z_{0j}\right)  = {\mathrm rs}\left(\overline{Z}_{0j}\right)$ for  $1 \leq  j \leq q$ and ${\mathrm rs}\left(Z_{0j}\right)  =  0$ for $q+1 \leq  j \leq p$.  
We have so that $\Delta_{\mathcal{P}}$ is left invariant under $W_G$ (resp. $L_G$) if and only if conditions (\ref{eq:equal_Lap}) are verified. 

Now,  for tagged partitions where $q=0$, that is, there are no counterparts, we obtain conditions (\ref{eq:equal_Lap_Z}), since in that case, applying the matrix  $W_G$  (resp.  $L_G$) with block structure  (\ref{eq:oddbf}) to the collumn vector $X = \left(X_1, \ldots, \ldots, X_p, {\bf 0} \right) \in \Delta_{\mathcal{P}}$, where in case $r=1$, as before, cells corresponding to $P_0$ are set to zero, we obtain a column vector where:\\
(i)  The components corresponding to the cells in the $p$ parts $P_1, \ldots, P_p$ are the entries of the column vector
$$
{\tiny 
\left( 
 \begin{array}{cccc}
Q_{11}  & Q_{12} &  \cdots &Q_{1p}  \\
 \vdots& \vdots & \cdots &\vdots  \\
Q_{p1} & Q_{p2} & \cdots &Q_{pp}
\end{array}
\right)
\left( 
\begin{array}{c}
X_1\\
\vdots \\
X_p
\end{array}
\right)}; 
$$
\noindent (ii) The components corresponding to the cells in the part $P_0$ in case $r=1$ are the entries of the column vector: 
$$
{\tiny \left( 
 \begin{array}{cccc}
Z_{01} & Z_{02} &  \cdots &Z_{0p} 
\end{array}
\right)
\left( 
\begin{array}{c}
X_1\\
\vdots \\
X_p
\end{array}
\right)}\, .
$$
\end{proof}

\begin{exam} 		
Let $G$ be the five-cell non-regular network with adjacency matrix 
$$
W_{G} =
	\left( 
 \begin{array}{c|cc|c|c}
0 & -\frac{3}{2} & -\frac{3}{2} & 1 & \frac{23}{10}  \\
\hline 
-2 & 0 & 1 & 1 & 1  \\
-1 & 1 & 0 & 2 & 0  \\
\hline 
2 & 3 & 0 & 1 & \frac{11}{10}  \\
\hline 
1 & 1 & -1 & 1 & -3  
 \end{array}
 \right) = 
 \displaystyle 
\left( 
 \begin{array}{c|c|c|c}
 Q_{11} & Q_{12} & R_{11} & Z_{10} \\
 \hline 
  Q_{21} & Q_{22} & R_{21} & Z_{20} \\
 \hline 
 \overline{R}_{11} & \overline{R}_{12} & \overline{Q}_{11} & \overline{Z}_{10} \\
  \hline 
 Z_{01} & Z_{02} &   \overline{Z}_{01} & Z_{00} 
 \end{array}
 \right)
 \, .
 $$	
 
 Consider the generalized polydiagonal  subspace 
 $$\Delta_{\footnotesize{\mathcal{P}}}=\{ x \in \R^5: \ x_1 =-x_4,\ x_2=x_3,\ x_5=0\}
 $$
 for the tagged partition 
 $$\mathcal{P} = \{  P_1 =\{1\}, P_2= \{2,3\}, \overline{P}_1 =\{4\}, P_0 =\{5\} \}
  $$
  of $C$. Thus, $p=2$, $q=1$ and $r=1$. Note that the enumeration of the network set of cells is adapted to $\mathcal{P}$ providing 
the above  block structure of  $W_G$.  We have that:\\
 \begin{equation*}
\begin{array}{l}
\left\{
\begin{array}{ll}
{\mathrm rs}\left(Q_{11}\right) - {\mathrm rs}\left( R_{11}\right) = -{\mathrm rs}\left( \overline{R}_{11}\right)+{\mathrm rs}\left(\overline{Q}_{11}\right) =(-1);\\
{\mathrm rs}\left(Q_{12}\right) = -{\mathrm rs}\left( \overline{R}_{12}\right) =(-3); \\
{\mathrm rs}\left(Q_{21}\right) - {\mathrm rs}\left( R_{21}\right) \mbox{ is regular of valency } -3;\\
Q_{22} \mbox{ is regular of valency } 1; \\
{\mathrm rs}\left(Z_{01}\right)  = {\mathrm rs}\left(\overline{Z}_{01}\right) = (1); \\
{\mathrm rs}\left(Z_{02}\right)  =  (0).
\end{array}
\right.
\end{array}
\end{equation*} 
It follows, from Proposition~\ref{thm:mainLaplacian} that  $\Delta_{\footnotesize{\mathcal{P}}}$ is left invariant by the adjacency matrix $W_{G}$.
\hfill $\Diamond$
\end{exam}

The next corollary gives a characterization of the generalized polydiagonals that are invariant under the Laplacian matrix of a weighted network but in terms of its adjacency matrix.

\begin{cor}\label{cor:mainLaplacian}
Let $G$ be a weighted network with set of cells $C = \{ 1, \ldots, n\}$ and $\Delta_{\mathcal{P}}$ a generalized polydiagonal subspace of $\R^n$ for a tagged partition $\mathcal{P}$ of $C$. 
Consider an enumeration of  $C$ adapted to the partition $\mathcal{P}$. The Laplacian matrix $L_G$ leaves invariant the generalized polydiagonal $\Delta_{\mathcal{P}}$  if and only if the block structure  (\ref{eq:oddbf}) of  the adjacency matrix $W_G$ of $G$ satisfies the following conditions: \\

\noindent The matrices $Q_{ij},\, R_{ij},\, Z_{i0}$ and  $\overline{Q}_{ij}, \, \overline{R}_{ij},\ \overline{Z}_{i0}$ are related in the following way:\\
\noindent  When $q > 0$,\\  
\noindent (i) For each $i=1, \ldots, q$, and for $j=1, \ldots, q;\,  j\not=i$ the column vectors 
{\small $$ 
\left\{ \begin{array}{lr}
\displaystyle \sum_{j=1, j\not= i}^{p} {\mathrm rs}\left(Q_{ij}\right) + \sum_{j=1, j\not= i}^{q}   {\mathrm rs}\left(R_{ij}\right) +  2  {\mathrm rs}\left(R_{ii}\right)  + {\mathrm rs}\left( Z_{i0} \right) 
 & \\
& \mbox{ are regular of the same valency $r_i$;}\\

\displaystyle \sum_{j=1, j\not= i}^{q} {\mathrm rs}\left(\overline{Q}_{ij}\right) +    \sum_{j=1, j\not= i}^{p}    {\mathrm rs}\left(\overline{R}_{ij}\right) + 2  {\mathrm rs}\left(\overline{R}_{ii}\right)  
+ {\mathrm rs}\left( \overline{Z}_{i0} \right) &  
\end{array}\right. 
$$}
\noindent $
{\small 
\left\{
\begin{array}{lr}
\\
 - {\mathrm rs}(Q_{ij}) +  {\mathrm rs}(R_{ij}) & \\
  & \mbox{ are regular of the same valency $q_{ij}$}. \\
{\mathrm rs}\left(\overline{R}_{ij}\right) - {\mathrm rs}\left(\overline{Q}_{ij}\right) & 
\end{array}\right.}
$
\\
\\
  \noindent (ii)  For each $i \in \{1,\ldots,q\}$  and $j \in \{q+1,\ldots,p\}$,  \\
  \\
  \noindent 
  $ 
  \left\{
  \begin{array}{lr}
  - {\mathrm rs}\left(Q_{ij}\right)  & \\
  & \mbox{ are regular of the same valency } q_{ij}.\\
  {\mathrm rs}\left( \overline{R}_{ij}\right) & 
  \end{array}\right.
  $\\
  \\
  \noindent For all $q \in \NN_0$,  \\
 (iii) For each $i \in \{q+1,\ldots,p\}$ and $j \in \{1,\ldots,q\}$, 
 $$
 \begin{array}{l}
 \displaystyle \sum_{j=1, j\not= i}^{p} {\mathrm rs}\left(Q_{ij}\right) + \sum_{j=1}^{q}   {\mathrm rs}\left(R_{ij}\right) + {\mathrm rs}\left( Z_{i0} \right) \mbox{ is regular of valency $r_i$;}\\
 \\
  - {\mathrm rs}\left(Q_{ij}\right) + {\mathrm rs}\left( R_{ij}\right)  \mbox{ is regular of valency } q_{ij}.
  \end{array}
  $$
 
\noindent (iv) For each $i \in \{q+1,\ldots,p\}$ and $ j \in \{q+1,\ldots,p\},\ j\not=i$,   \\
  $$-Q_{ij} \mbox{ is regular of valency $q_{ij}$}
  \, .$$
    
\noindent The matrices $Z_{0j}$ and $\overline{Z}_{0j}$ satisfy:\\
(v) If $q > 0$, for each $j \in \{1,\ldots,q\}$,  we have: \\
$$
{\mathrm rs}(Z_{0j})  = {\mathrm rs}\left(\overline{Z}_{0j}\right)\, . \
$$

\noindent (vi) For all $q \in \NN_0$, for each $j \in \{q+1,\ldots,p\}$, \\
  $$
  Z_{0j}  \mbox{ is regular of valency zero.}
  $$
\end{cor}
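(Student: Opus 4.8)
The strategy is to derive Corollary~\ref{cor:mainLaplacian} directly from Proposition~\ref{thm:mainLaplacian} by translating the conditions that the \emph{Laplacian} block structure must satisfy into conditions on the \emph{adjacency} block structure. The crucial observation is the relationship between the two block decompositions: if $W_G$ has block structure~(\ref{eq:oddbf}) with blocks $Q_{ij},R_{ij},Z_{i0},\overline{Q}_{ij},\overline{R}_{ij},\overline{Z}_{i0}$ and $L_G = D_G - W_G$, then $L_G$ has a block structure for the same tagged partition $\mathcal{P}$ whose off-diagonal blocks are simply the negatives of those of $W_G$, while the diagonal blocks are modified by subtracting the row-valency data from the diagonal entries. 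Concretely, for a diagonal block of $L_G$ sitting on the part $P_i$ (say), its row sum at a cell $k \in P_i$ equals $v(k)$ minus the row sum of the corresponding diagonal block of $W_G$; and since $v(k) = \sum_{\text{all parts } H} v_H(k)$, we can expand $v(k)$ as the total of the row sums, over cell $k$, of \emph{every} block of $W_G$ in the row-strip indexed by $P_i$. This is exactly how the long sums $\sum_{j} \mathrm{rs}(Q_{ij}) + \cdots$ in items (i) and (iii) arise.

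First I would fix an enumeration adapted to $\mathcal{P}$ and write down explicitly the $\mathrm{rs}$ of each block of $L_G$ in terms of the blocks of $W_G$. For an off-diagonal block (one coupling two distinct parts) this is just $\mathrm{rs}$ of the negative of the corresponding $W_G$-block, i.e. $-\mathrm{rs}(Q_{ij})$, $-\mathrm{rs}(R_{ij})$, and so on. For a diagonal block of $L_G$ on part $P_i$, its $(k,k)$ entry is $v(k) - w_{kk}$ and its off-diagonal entries inside that block are $-w_{k\ell}$; hence its row sum at $k$ is $v(k) - \sum_{\ell \in P_i} w_{k\ell} = v(k) - v_{P_i}(k)$, which by Remark~\ref{rmk_block_match} equals the sum over all \emph{other} blocks in that row-strip of their row sums at $k$ — giving $\sum_{j \neq i}\mathrm{rs}(Q_{ij}) + \sum_{\text{counterpart blocks}}\mathrm{rs}(R_{ij}) + \mathrm{rs}(Z_{i0})$ evaluated at $k$. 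The one subtlety is the diagonal blocks whose part has a counterpart in $\mathcal{P}$: when $P_i$ has counterpart $\overline{P}_i$, the "diagonal-type" blocks $Q_{ii}$ and $R_{ii}$ both touch the same row-strip, so the valency $v_{P_i \cup \overline{P}_i}(k)$ that must be peeled off produces the combination $\mathrm{rs}(Q_{ii}) + \mathrm{rs}(R_{ii})$ subtracted; combined with the off-diagonal contribution $-\mathrm{rs}(R_{ii})$ coming from the block $R_{ii}$ of $-W_G$ itself, the net effect is the coefficient $2\,\mathrm{rs}(R_{ii})$ that appears in item (i). I would track this bookkeeping carefully part by part.

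Once these substitutions are in hand, the proof is mechanical: plug the expressions for $\mathrm{rs}$ of the $L_G$-blocks into each of the six clauses of~(\ref{eq:equal_Lap}) (for $q>0$) and the two clauses of~(\ref{eq:equal_Lap_Z}) (for $q=0$), and read off items (i)–(vi). The first clause of~(\ref{eq:equal_Lap}), "$\mathrm{rs}(Q_{ij}) - \mathrm{rs}(R_{ij})$ and $\mathrm{rs}(\overline{Q}_{ij}) - \mathrm{rs}(\overline{R}_{ij})$ regular of the same valency" for $1\le i,j\le q$, splits into the diagonal case $i=j$ (giving the long-sum regularity conditions with common valency $r_i$ in (i)) and the off-diagonal case $i\neq j$ (giving $-\mathrm{rs}(Q_{ij})+\mathrm{rs}(R_{ij})$ and $\mathrm{rs}(\overline{R}_{ij})-\mathrm{rs}(\overline{Q}_{ij})$ regular of common valency $q_{ij}$, again in (i)); the second clause yields (ii); the third yields (iii); the fourth yields (iv); and the fifth and sixth, since the $Z_{0j}$-row-strip of $L_G$ has the same off-diagonal structure $-Z_{0j}$ and no diagonal correction relevant to these entries, yield (v) and (vi) directly (the sign flip is absorbed since "$-\mathrm{rs}(Z_{0j}) = -\mathrm{rs}(\overline{Z}_{0j})$" is equivalent to "$\mathrm{rs}(Z_{0j}) = \mathrm{rs}(\overline{Z}_{0j})$", and "$-\mathrm{rs}(Z_{0j}) = 0$" to "$Z_{0j}$ regular of valency zero"). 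The $q=0$ case is the obvious specialization. The main obstacle — and the only place where care is genuinely needed rather than routine algebra — is precisely the handling of the counterpart diagonal blocks and getting the factor $2$ and the signs right when the valency $v(k)$ is expanded; everything else is transcription from Proposition~\ref{thm:mainLaplacian}.
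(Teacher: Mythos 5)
Your proposal takes essentially the same route as the paper's proof: apply Proposition~\ref{thm:mainLaplacian} to $L_G$, write each block of $L_G$ as the corresponding block of $D_G-W_G$ (off-diagonal blocks are just negated, diagonal blocks pick up the valency correction), expand the row sums of the diagonal blocks of $D_G$ as total input valencies, and transcribe conditions (\ref{eq:equal_Lap})--(\ref{eq:equal_Lap_Z}) into items (i)--(vi); this is exactly the paper's argument. One caveat, precisely at the step you single out as the crux: your second description of the factor $2\,{\mathrm rs}(R_{ii})$ is not consistent with your first (correct) one. Inside the diagonal block $Q^L_{ii}$ only $v_{P_i}(k)={\mathrm rs}(Q_{ii})_k$ is peeled off, not $v_{P_i\cup\overline{P}_i}(k)$, so ${\mathrm rs}(Q^L_{ii})=\sum_{j\not=i}{\mathrm rs}(Q_{ij})+\sum_{j=1}^{q}{\mathrm rs}(R_{ij})+{\mathrm rs}(Z_{i0})$ already contains ${\mathrm rs}(R_{ii})$ once, and the second copy comes from $-{\mathrm rs}(R^L_{ii})=+{\mathrm rs}(R_{ii})$ in the first clause of (\ref{eq:equal_Lap}); if one literally peels off ${\mathrm rs}(Q_{ii})+{\mathrm rs}(R_{ii})$ and then combines with the contribution of $R^L_{ii}=-R_{ii}$, the coefficient of ${\mathrm rs}(R_{ii})$ comes out $1$, not $2$. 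Using your earlier, correct identity for the diagonal block, the rest of the transcription goes through exactly as in the paper.
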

\begin{proof}
 Let $G$ be a weighted network with set of cells $C = \{ 1, \ldots, n\}$, adjacency matrix $W_G$ and Laplacian matrix $L_G = D_G - W_G$. 
Given a generalized polydiagonal $\Delta_{\mathcal{P}}$, we consider the associated tagged partition $\mathcal{P}$ of $C$ determined by $p,q,r$ where $0 \leq q \leq p \leq n$ and $r \in \{0,1\}$. Denote the parts of $\mathcal{P}$ by $P_1, P_2, \ldots, P_p,$  the counterparts by $\overline{P}_1, \overline{P}_2, \ldots, \overline{P}_q$ and the zero part by $P_0$. Note that if $q=0$, then there are no counterparts and if $r=0$ then there is no zero part. Consider an enumeration of $C$ adapted to the partition $\mathcal{P}$ providing a block structure (\ref{eq:oddbf}) for $W_G$ and corresponding block structures for $L_G$ and $D_G$. For the blocks in $L_G$ and $D_G$ we use superscripts, respectively, $L$ and $D$. By Proposition~\ref{thm:mainLaplacian}, the generalized polydiagonal  $\Delta_{\mathcal{P}}$ is left invariant by the Laplacian matrix $L_G$  if and only if conditions (\ref{eq:equal_Lap}) and (\ref{eq:equal_Lap_Z}) are verified for the blocks $Q^L_{ij}$, $R^L_{ij}$, $Z^L_{0j}$, $\overline{Q}^L_{ij}$, $\overline{R}^L_{ij}$ and $\overline{Z}^L_{0j}$ of $L_G$.

For $i=j$, we have
$$
Q^L_{ii} = Q^D_{ii} - Q_{ii}, \quad R^L_{ii} =  - R_{ii}, \quad \overline{Q}^L_{ii} = \overline{Q}^D_{ii} - \overline{Q}_{ii}, \quad \overline{R}^L_{ii} =  - \overline{R}_{ii}.
$$
Thus, for $i=j$, the first condition in (\ref{eq:equal_Lap}) is equivalent to
$$
{\mathrm rs}\left(Q^D_{ii}\right) - {\mathrm rs}\left(Q_{ii}\right) + {\mathrm rs}\left(R_{ii}\right) = {\mathrm rs}\left(\overline{R}_{ii}\right) + {\mathrm rs}\left(\overline{Q}^D_{ii}\right)-{\mathrm rs}\left(\overline{Q}_{ii}\right),
$$
where the left and right columns of this equality are regular. Given that, 
{\tiny 
$$
{\mathrm rs}\left(Q^D_{ii}\right) = \sum_{j=1}^p {\mathrm rs}\left(Q_{ij}\right) +   \sum_{j=1}^q {\mathrm rs}\left(R_{ij}\right) 
+  {\mathrm rs}\left(Z_{i0}\right), \quad {\mathrm rs}\left(\overline{Q}^D_{ii}\right) = 
\sum_{j=1}^q {\mathrm rs}\left(\overline{Q}_{ij}\right) +  \sum_{j=1}^p  {\mathrm rs}\left(\overline{R}_{ij}\right) 
+  {\mathrm rs}\left(\overline{Z}_{i0}\right),
$$}
if $1\leq i \leq q$, the above equality simplifies to
{\tiny 
$$
 \sum_{j=1, j\not= i}^{p} {\mathrm rs}\left(Q_{ij}\right) +   \sum_{j=1, j\not= i}^{q} {\mathrm rs}\left(R_{ij}\right)  + 2 {\mathrm rs}\left(R_{ii}\right) +  {\mathrm rs}\left(Z_{i0}\right) =   
  \sum_{j=1, j\not= i}^{q} {\mathrm rs}\left(\overline{Q}_{ij}\right) +   \sum_{j=1, j\not= i}^{p} {\mathrm rs}\left(\overline{R}_{ij}\right) + 2 {\mathrm rs}\left(\overline{R}_{ii}\right) +  {\mathrm rs}\left(\overline{Z}_{i0}\right)\, .
$$
}
 Thus we obtain the first equality in condition (i) of Corollary~\ref{cor:mainLaplacian}. Moreover, for $i\ne j$, we have
$$
Q^L_{ij} = - Q_{ij}, \quad R^L_{ij} =  - R_{ij}, \quad \overline{Q}^L_{ij} =  - \overline{Q}_{ij}, \quad \overline{R}^L_{ij} =  - \overline{R}_{ij}.
$$
Thus, for $i\ne j$, the first condition in (\ref{eq:equal_Lap}) is equivalent to
$$
 - {\mathrm rs}\left(Q_{ij}\right) + {\mathrm rs}\left(R_{ij}\right) = {\mathrm rs}\left(\overline{R}_{ij}\right) -{\mathrm rs}\left(\overline{Q}_{ij}\right),
$$
where the left and right columns of this equality are regular (of the same valency).
Thus we obtain the second equality in condition (i) of Corollary~\ref{cor:mainLaplacian}. 

Finally, the remaining conditions in  (\ref{eq:equal_Lap}) and (\ref{eq:equal_Lap_Z}) of Proposition~\ref{thm:mainLaplacian} are equivalent to (ii)-(vi) of Corollary~\ref{cor:mainLaplacian}. 
 
\end{proof}

In Proposition~\ref{thm:mainLaplacian}, if we restrict to polydiagonal subspaces we get the following.

\begin{cor} \label{thm:mainpart_exo} 
 Let $G$ be a weighted network with set of cells $C = \{ 1, \ldots, n\}$ and $\Delta_{\mathcal{P}}$ a polydiagonal subspace of $\R^n$. Consider the associated (standard) partition $\mathcal{P}$ of $C$ with $p>0$ parts, say  $P_1, P_2, \ldots, P_p$, and take an enumeration of  $C$ adapted to the partition $\mathcal{P}$. \\
(i) The adjacency matrix $W_G$ of $G$ leaves invariant the  polydiagonal $\Delta_{\mathcal{P}}$  if and only if in the  block structure  (\ref{eq:oddbf}) of $W_G$ 
 every matrix $Q_{ij}$ is regular, for $i,j \in \{1, \ldots, p\}$. \\
 (ii) The Laplacian matrix $L_G$ of $G$ leaves invariant the  polydiagonal $\Delta_{\mathcal{P}}$  if and only if in the  block structure  (\ref{eq:oddbf}) of $W_G$ 
 every matrix $Q_{ij}$, with $i \ne j$, is regular, for $i,j \in \{1, \ldots, q\}$.
 \end{cor}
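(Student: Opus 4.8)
The plan is to specialize Proposition~\ref{thm:mainLaplacian} to the case of a standard partition, where $q=0$ and $r=0$. In that situation the block form (\ref{eq:oddbf}) of $W_G$ collapses to the single family $Q_{ij}$, $1\le i,j\le p$, with no counterpart blocks $R_{ij},\overline{R}_{ij},\overline{Q}_{ij}$ and no zero-part blocks $Z_{i0},\overline{Z}_{i0},Z_{0j},\overline{Z}_{0j},Z_{00}$.

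For (i) I would simply invoke Proposition~\ref{thm:mainLaplacian} for $W_G$: with $q=0$ the system (\ref{eq:equal_Lap}) is vacuous, and (\ref{eq:equal_Lap_Z}) with $r=0$ reads exactly ``$Q_{ij}$ is regular for $1\le i,j\le p$'' (the clause on $Z_{0j}$ fires only when $r=1$). This is (i).

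For (ii) the cleanest route is to apply (i) to the network $G_L$, whose weighted adjacency matrix is the Laplacian $L_G$ (Remark~\ref{rmk:Laplacian}). An enumeration of $C$ adapted to $\mathcal{P}$ for $W_G$ is also adapted to $\mathcal{P}$ for $L_G$, so by (i) applied to $L_G$, the matrix $L_G$ leaves $\Delta_{\mathcal{P}}$ invariant if and only if every block $Q^L_{ij}$ of $L_G$ in the form (\ref{eq:oddbf}) is regular, $1\le i,j\le p$. It then remains to translate this back to $W_G$ via $L_G=D_G-W_G$. Since $D_G$ is diagonal its off-diagonal blocks vanish, so $Q^L_{ij}=-Q_{ij}$ for $i\ne j$, and hence $Q^L_{ij}$ is regular if and only if $Q_{ij}$ is. For $i=j$ one has $Q^L_{ii}=Q^D_{ii}-Q_{ii}$, where $Q^D_{ii}$ is the diagonal block of $D_G$ on $P_i$, itself a diagonal matrix whose $(k,k)$-entry is the input valency $v(k)$, $k\in P_i$. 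Because $q=0$ and $r=0$, Remark~\ref{rmk_block_match} gives $v(k)=\sum_{j=1}^{p}\bigl({\mathrm rs}(Q_{ij})\bigr)_k$ for $k\in P_i$, so ${\mathrm rs}(Q^L_{ii})=\sum_{j=1,\,j\ne i}^{p}{\mathrm rs}(Q_{ij})$. Thus, once every off-diagonal block $Q_{ij}$ ($i\ne j$) is regular, ${\mathrm rs}(Q^L_{ii})$ is a sum of constant vectors and $Q^L_{ii}$ is automatically regular; conversely the off-diagonal conditions are $Q^L_{ij}=-Q_{ij}$ regular, i.e.\ $Q_{ij}$ regular. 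Collecting these equivalences, ``$Q^L_{ij}$ regular for all $1\le i,j\le p$'' is equivalent to ``$Q_{ij}$ regular for all $i\ne j$ in $\{1,\dots,p\}$'', which is (ii) (the index set in the displayed statement of (ii) should read $\{1,\dots,p\}$). Alternatively, (ii) drops straight out of conditions (iv) and (iii) of Corollary~\ref{cor:mainLaplacian} with $q=0$, once one notes that (iii) is implied by (iv) since a finite sum of regular blocks is regular.

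There is no genuinely hard step here. The only point that will require a moment's care is the diagonal case $i=j$ in (ii): checking that regularity of the Laplacian's diagonal block $Q^L_{ii}$ is forced the instant the off-diagonal blocks $Q_{ij}$ are regular. That is exactly where one uses that $D_G$ records the total row sums of $W_G$, i.e.\ $v(k)=\sum_{j}\bigl({\mathrm rs}(Q_{ij})\bigr)_k$.
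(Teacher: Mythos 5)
Your proposal is correct and follows essentially the same route as the paper: part (i) is the specialization of Proposition~\ref{thm:mainLaplacian} to a standard partition ($q=r=0$), and part (ii) is obtained by applying (i) to $L_G$ and noting that, since $L_G=D_G-W_G$ is regular of valency zero, the off-diagonal blocks satisfy $Q^L_{ij}=-Q_{ij}$ while regularity of the diagonal blocks $Q^L_{ii}$ is automatic once the off-diagonal ones are regular. Your explicit computation of ${\mathrm rs}(Q^L_{ii})=\sum_{j\ne i}{\mathrm rs}(Q_{ij})$ just spells out the paper's valency-zero observation, and you are right that the index set in statement (ii) should read $\{1,\dots,p\}$.
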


\begin{proof}The statement (i) follows directly from Proposition~\ref{thm:mainLaplacian}, considering that the partition $\mathcal{P}$ is standard, i.e., it is determined by $p>0$ and $q=r=0$, that is, there are no counterparts neither the zero part. To conclude (ii), note that, applying (i) to $L_G$, as $L_G$ is regular of valency zero, it follows that every matrix $Q^L_{ij}$ is regular, for all $i,j \in \{1, \ldots, q\}$ if and only if $Q^L_{ij} = -Q_{ij}$ is regular, for all $i,j \in \{1, \ldots, q\}$ with $i\not=j$.
\end{proof}

 \subsection*{Exo-balanced  and balanced standard partitions}
 
 We recall the concepts of balanced and exo-balanced (standard) partitions for weighted networks, as in Aguiar and Dias~\cite{AD18}. The concept of balanced partition was first introduced in the formalism of Golubitsky, Stewart and collaborators, where the network connections have associated nonnegative integer values and extended to the weighted formalism in Aguiar and Dias~\cite{AD18}.
 
 \begin{Def} \normalfont   \label{def:balanced}
Let $G$ be a weighted network with set of cells $C$ and a standard partition $\mathcal{P} = \left\{ P_1, P_2, \ldots, P_p \right\}$ of $C$.\\
(i) The partition $\mathcal{P}$  is said {\it exo-balanced} when the corresponding polydiagonal subspace $\Delta_{\mathcal{P}}$ is left invariant by the Laplacian matrix $L_G$ of $G$, that is, when 
$$v_{P} (i) = v_{P} (i')$$ 
for $[i]=[i']$ and for all $P \in \mathcal{P} \setminus \{ [i] \}$. We denote by $\mathcal{P}_{G,exo}$ the {\it set of exo-balanced (standard) partitions} of $G$. \\
(ii) The partition $\mathcal{P}$  is said {\it balanced} when the corresponding polydiagonal subspace $\Delta_{\mathcal{P}}$ is left invariant by the adjacency matrix $W_G$ of $G$, that is, when 
$$v_{P} (i) = v_{P} (i')$$ 
for $[i]=[i']$ and for all $P \in \mathcal{P}$. We denote by $\mathcal{P}_{G,bal}$ the {\it set of balanced (standard) partitions} of $G$.
\hfill $\Diamond$
\end{Def}

\begin{rem}\normalfont Recalling Remark~\ref{rmk_block_match} and using the notation of Corollary~\ref{thm:mainpart_exo} for the adjacency matrix $W_G$ of a weighted network $G$, we have that a standard partition  $\mathcal{P}$ is exo-balanced when every matrix $Q_{ij}$, with $i \ne j$, is regular, for $i,j \in \{1, \ldots, p\}$. Also, a standard partition  $\mathcal{P}$ is balanced when every matrix $Q_{ij}$ is regular, for all $i,j \in \{1, \ldots, p\}$. 
\hfill $\Diamond$
\end{rem}

\begin{rem}\normalfont \label{rmk:bal_G_L}
Let $G$ be a weighted network with weighted adjacency matrix $W_G$ and consider the network $G_L$ associated with the Laplacian matrix $L_G$ of $G$. We have that $\mathcal{P}_{G_L,bal} = \mathcal{P}_{G,exo}$.
\hfill $\Diamond$
\end{rem}

It follows from Proposition~\ref{prop:subset} the following relation between the set of the balanced and the set of the exo-balanced partitions of a network $G$, which is a generalization to weighted networks of Proposition 3.15 in Neuberger {\it et al.} \cite{NSS19}. 

\begin{cor} \label{prop:reg}
Let $G$ be a weighted network. We have,\\
(i) $\mathcal{P}_{G,bal} \subseteq \mathcal{P}_{G,exo}$; \\
(ii) $\mathcal{P}_{G,bal} = \mathcal{P}_{G,exo}$ if and only if $G$ is regular;\\
\end{cor}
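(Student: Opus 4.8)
The plan is to derive Corollary~\ref{prop:reg} directly from Proposition~\ref{prop:subset} by translating the statements about invariant polydiagonals into statements about balanced and exo-balanced partitions, using the dictionary provided by Definition~\ref{def:balanced}: a standard partition $\mathcal{P}$ is balanced exactly when $\Delta_{\mathcal{P}}$ is $W_G$-invariant, and exo-balanced exactly when $\Delta_{\mathcal{P}}$ is $L_G$-invariant. Thus the set $\mathcal{P}_{G,bal}$ is in bijection with the set of polydiagonals invariant under $W_G$, and $\mathcal{P}_{G,exo}$ with those invariant under $L_G$.

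For part (i), I would invoke the argument already contained in the proof of Proposition~\ref{prop:subset}(ii): if $\Delta_{\mathcal{P}}$ is $W_G$-invariant for a standard partition $\mathcal{P}$, then any two cells $i,j$ in the same part have equal input valency $v(i)=v(j)$, so the diagonal blocks of $D_G$ agree on each part, whence $\Delta_{\mathcal{P}}$ is $D_G$-invariant and therefore $L_G=D_G-W_G$-invariant. Translating back, every balanced partition is exo-balanced, i.e. $\mathcal{P}_{G,bal}\subseteq\mathcal{P}_{G,exo}$. (One could equally quote Corollary~\ref{thm:mainpart_exo}, since the block condition for balanced — all $Q_{ij}$ regular — clearly implies the block condition for exo-balanced — all $Q_{ij}$ with $i\neq j$ regular.)

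For part (ii), the ``if'' direction follows because when $G$ is regular, $D_G=v_w I$ and so $W_G$-invariance and $L_G$-invariance coincide for every subspace, in particular for every polydiagonal; hence $\mathcal{P}_{G,bal}=\mathcal{P}_{G,exo}$. For the ``only if'' direction, I would argue contrapositively: if $G$ is not regular, consider the trivial standard partition with a single part $P_1=C$, whose polydiagonal is the full diagonal $\{x:\,x_1=\cdots=x_n\}$. This is always $L_G$-invariant (as $L_G$ is regular of valency $0$, by Remark~\ref{rmk:Laplacian}), so this partition is exo-balanced; but since $G$ is not regular, not all cells share the same input valency, so this partition is not balanced, and $\mathcal{P}_{G,bal}\subsetneq\mathcal{P}_{G,exo}$.

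There is essentially no obstacle here — the corollary is a repackaging of Proposition~\ref{prop:subset}(ii) through the definitions, and the only point requiring a line of care is making the bijection between standard partitions and polydiagonal subspaces explicit so that the strict/non-strict inclusion of subspace-sets transfers verbatim to the partition-sets.
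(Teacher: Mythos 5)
Your proposal is correct and follows the same route as the paper, which states the corollary as an immediate consequence of Proposition~\ref{prop:subset} via the dictionary of Definition~\ref{def:balanced} (balanced $\Leftrightarrow$ $W_G$-invariant, exo-balanced $\Leftrightarrow$ $L_G$-invariant); your write-up simply makes that derivation explicit, including the same key points (equal valencies within parts give $D_G$-invariance, and the full diagonal witnesses strictness when $G$ is not regular).
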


It follows from Corollary~\ref{prop:reg} that, $\mathcal{P}_{G,exo} \setminus \mathcal{P}_{G,bal} \not= \emptyset$, if a network $G$ is not regular. We have then the following definition.

\begin{Def} \normalfont   
A standard partition in $\mathcal{P}_{G,exo} \setminus \mathcal{P}_{G,bal}$ is said to be {\it strictly exo-balanced}.
 \hfill $\Diamond$ 
\end{Def}

A standard partition $\mathcal{P}$ is so strict exo-balanced if and only if the subspace $\Delta_P$ is left invariant by the Laplacian matrix $L_G$ of $G$ but not by its adjacency matrix $W_G$.

\begin{exam}
Any weighted network has at least the exo-balanced standard partition corresponding to the trivial partition with only one part, the network set of cells. If the network is not regular, then the trivial (standard) partition is strictly exo-balanced.
\hfill $\Diamond$
\end{exam}

\begin{rem}\normalfont \label{rmk:bal_exo_L}
Let $G$ be a weighted network with weighted adjacency matrix $W_G$ and consider the network $G_L$ associated with the Laplacian matrix $L_G$ of $G$. 
By Remark~\ref{rmk:bal_G_L}, the set of strict exo-balanced partitions for $G$ is formed by the balanced partitions of $G_L$ which are not balanced for $G$, that is, $\mathcal{P}_{G_L,bal} \setminus \mathcal{P}_{G,bal}$.
\hfill $\Diamond$ 
\end{rem}

\subsubsection*{Quotient networks for balanced and exo-balanced standard partitions}

We recall the concept of quotient network  on balanced standard partitions of networks in the formalism of Golubitsky, Stewart and collaborators, where the network connections have associated nonnegative integer values. These concepts  are also valid and extend trivially to the weighted formalism as stated  in Aguiar and Dias~\cite{AD18}.

Following Section 2 of  \cite{AD18}, given a balanced standard partition $\mathcal{P}$ of the set of cells of a weighted network $G$, the  associated {\it quotient network} $G_{\footnotesize{\mathcal{P}}}$ is the weighted  network defined in the following way: each cell in $G_{\footnotesize{\mathcal{P}}}$  corresponds to a part in $\mathcal{P}$; denoting by $[i]$ the part in $\mathcal{P}$ containing $i$, there is an edge from $[j]$ directed to $[i]$ if and only if there exists in $G$ an edge directed from $j'$ to $i'$, with $j' \in [j]$ and $i' \in [i]$. Moreover, the weight of  the edge directed from $[j]$ to $[i]$ is $v_{[j]} (i) $. That is, if the balanced partition  $\mathcal{P}$ has $p$ parts and $W_{G_{\footnotesize{\mathcal{P}}}}=[q_{[i],[j]}]_{p \times p}$ is the weighted adjacency matrix of the quotient network $G_{\footnotesize{\mathcal{P}}}$, we have $q_{[i],[j]} =  v_{[j]} (i)$. The network $G$ is said to be a {\it lift} of $G_{\footnotesize{\mathcal{P}}}$ by a balanced partition. 

From Definition~\ref{def:balanced} (ii) and Corollary~\ref{thm:mainpart_exo}  (i), we have:

\begin{prop}
\label{prop:quotient_bal}
Let $G$ be a weighted network and $W_G$ the corresponding weighted adjacency matrix. Let $\mathcal{P}$ be a balanced standard 
 partition of the set of cells of $G$ with parts $P_1, \ldots, P_p$ and assume an enumeration of  the network set of cells adapted to the partition $\mathcal{P}$ providing a  block structure (\ref{eq:oddbf}). The adjacency matrix of the quotient network $G_{\footnotesize{\mathcal{P}}}$ is the $p \times p$ matrix $W_{G_{\footnotesize{\mathcal{P}}}} = [q_{ij}]$ with $q_{ij} = v_{Q_{ij}}$.
\end{prop}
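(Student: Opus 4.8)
The plan is to unwind the definitions of the quotient network and of the block structure (\ref{eq:oddbf}) and observe that they match on the nose. Since $\mathcal{P}$ is a balanced standard partition, Corollary~\ref{thm:mainpart_exo}(i) tells us that in the block structure (\ref{eq:oddbf}) of $W_G$ adapted to $\mathcal{P}$ every block $Q_{ij}$ is regular; write $v_{Q_{ij}}$ for its common row sum. By Remark~\ref{rmk_block_match}, the row sum of $Q_{ij}$ computes exactly the relative input valency: for each cell $k \in P_i$ we have $\left({\mathrm rs}(Q_{ij})\right)_k = v_{P_j}(k)$. Regularity of $Q_{ij}$ therefore says precisely that $v_{P_j}(k)$ does not depend on the choice of $k \in P_i$, so $v_{P_j}(i)$ is well defined (this is exactly Definition~\ref{def:balanced}(ii)), and its value is the scalar $v_{Q_{ij}}$.

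Next I would recall from Section 2 of \cite{AD18} (as quoted just before the statement) the definition of the quotient network $G_{\footnotesize{\mathcal{P}}}$: its cells are the parts $P_1,\ldots,P_p$, there is an edge from $[j]$ to $[i]$ precisely when some $j' \in P_j$ connects to some $i' \in P_i$ in $G$, and the weight of that edge is $v_{P_j}(i)$; equivalently, the adjacency matrix is $W_{G_{\footnotesize{\mathcal{P}}}} = [q_{ij}]_{p\times p}$ with $q_{ij} = v_{P_j}(i)$, with the convention that a zero entry corresponds to the absence of an edge. Combining the two paragraphs, $q_{ij} = v_{P_j}(i) = v_{Q_{ij}}$, which is the claimed identity. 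One should also note that this is consistent with the edge/no-edge dichotomy: if $Q_{ij}$ is the zero block then $v_{Q_{ij}} = 0$ and there is no edge, while if some entry of $Q_{ij}$ is nonzero then there is an edge, and regularity forces $v_{Q_{ij}} \ne 0$ only when such a nonzero entry forces the common row sum to be nonzero — but in fact the quotient is allowed to carry a zero-weight edge, so no case analysis is actually needed beyond matching the weights.

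The proof is thus essentially a bookkeeping argument: the only real content is already contained in Corollary~\ref{thm:mainpart_exo}(i) and Remark~\ref{rmk_block_match}, and the statement of Proposition~\ref{prop:quotient_bal} is a repackaging of the definition of $G_{\footnotesize{\mathcal{P}}}$ in matrix-block language. Accordingly I do not expect a genuine obstacle; the one point that needs care is the bridge between the ``relational'' definition of the quotient network (existence of an edge from $P_j$ to $P_i$) and the matrix-level statement, i.e.\ confirming that setting $q_{ij} = v_{Q_{ij}}$ is faithful to the convention that the absence of an edge is recorded by a zero entry. This is immediate once one observes that $v_{Q_{ij}} = 0$ is compatible with ``no edge'' and that whenever there is an edge the weight is defined to be $v_{P_j}(i) = v_{Q_{ij}}$ regardless of sign or vanishing. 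I would write the argument in three short sentences citing Corollary~\ref{thm:mainpart_exo}(i), Remark~\ref{rmk_block_match}, and Definition~\ref{def:balanced}(ii) together with the quoted definition of the quotient.
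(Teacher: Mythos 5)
Your proposal is correct and follows essentially the same route as the paper, which simply derives the proposition from Definition~\ref{def:balanced}(ii) and Corollary~\ref{thm:mainpart_exo}(i) together with the definition of the quotient network; your use of Remark~\ref{rmk_block_match} to identify $\left({\mathrm rs}(Q_{ij})\right)_k$ with $v_{P_j}(k)$ is exactly the intended bookkeeping.
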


Given a strict exo-balanced standard partition $\mathcal{P}$ on the set of cells of a weighted network $G$, we have that  $\mathcal{P}$ is balanced for $G_{-L}$ by Remark~\ref{rmk:bal_G_L}. It follows that we can take the quotient network of $G_{-L}$ by $\mathcal{P}$,  as defined above, where the $ij$ entry is $v_{[j]} (i)$ if $i \not= j$. We define:

\begin{Def} \normalfont   \label{def:quo:_exo}
Let $G$ be a coupled cell network with set of cells $C$.  Let $W=[w_{i,j}]_{n\times n}$ be the weighted adjacency matrix of $G$, $\mathcal{P}$ a strict exo-balanced standard partition on  $C$ with $p$ parts and $Q_{-L}$ the weighted quotient network of $G_{-L}$ by the balanced partition $\mathcal{P}$. Then, we define the {\it quotient of $G$ by $\mathcal{P}$} to be the network $Q_{\mathcal{P}}$ with adjacency matrix $[q_{i j}]_{1\leq i,j \leq p}$ obtained from the adjacency matrix of $Q_{-L}$ by setting to zero the diagonal entries: 
 $$
 q_{i j} = \left\{
 \begin{array}{ll}
 0, & \mbox{ if } [i] = [j] \\
 \\
 v_{[j]} (i), &  \mbox{ if } [i] \ne [j]
 \end{array}
 \right. \, .
 $$
\hfill $\Diamond$ 
\end{Def}

\begin{exam}
In Figure~\ref{f:um} we show a six-cell network $G$ and the standard partition $\mathcal{P} = \left\{ [1] = \{1,2,3\},\, [4] = \{4,5\},\, [6] = \{6\} \right\}$ of its set of cells. Note that $\mathcal{P}$ is not balanced for $G$ but it is balanced for $H \equiv G_{-L}$. Thus $\mathcal{P}$ is strictly exo-balanced for $G$.  On the right of Figure~\ref{f:um} we show the corresponding quotient networks as defined above. 

\hfill $\Diamond$ 
\end{exam}

\begin{figure}[!h]
\begin{tabular}{cc}
\begin{tikzpicture}
 [scale=.15,auto=left, node distance=1.5cm]
 \node[fill=magenta,style={circle,draw}] (n1) at (4,0) {\small{1}};
 \node[fill=magenta,style={circle,draw}] (n2) at (4,-6) {\small{2}};
  \node[fill=magenta,style={circle,draw}] (n3) at (14,0) {\small{3}};
 \node[fill=white,style={circle,draw}] (n4) at (14,-6)  {\small{4}};
 \node[fill=white,style={circle,draw}] (n5) at (24,0)  {\small{5}};
 \node[fill=green,style={circle,draw}] (n6) at (24,-6)  {\small{6}};
 \draw[->, thick] (n1) edge[thick]  node  [near end, above=0.1pt] {{\tiny $1$}} (n3); 
 \draw[->, thick] (n2) edge[thick]  node  [above=0.1pt] {{\tiny $2$}} (n3); 
 \draw[->, thick] (n2) edge[thick] node  [near end, above=0.1pt] {{\tiny $1$}} (n4); 
\draw[->, thick] (n3) edge[thick] node  [near end, above=1pt] {{\tiny $1$}} (n5); 
\draw[->, thick] (n4) edge[thick] node  [near end, above=1pt]  {{\tiny $2$}}  (n6); 
\end{tikzpicture}  & 
\begin{tikzpicture}
 [scale=.15,auto=left, node distance=1.5cm]
 \node[fill=magenta,style={circle,draw}] (n2) at (4,-6) {$\small{[1]}$};
 \node[fill=white,style={circle,draw}] (n4) at (14,-6)  {$\small{[4]}$};
 \node[fill=green,style={circle,draw}] (n6) at (24,-6)  {$\small{[6]}$};
 \draw[->, thick] (n2) edge[thick] node  [near end, above=0.1pt] {{\tiny $1$}} (n4); 
\draw[->, thick] (n4) edge[thick] node  [near end, above=1pt]  {{\tiny $2$}}  (n6); 
\end{tikzpicture}
\\
$G$ & $Q_{{\mathcal P}}$ \\
\\
\begin{tikzpicture}
 [scale=.15,auto=left, node distance=1.5cm]
 \node[fill=magenta,style={circle,draw}] (n1) at (4,0) {\small{1}};
 \node[fill=magenta,style={circle,draw}] (n2) at (4,-6) {\small{2}};
  \node[fill=magenta,style={circle,draw}] (n3) at (14,0) {\small{3}};
 \node[fill=white,style={circle,draw}] (n4) at (14,-6)  {\small{4}};
 \node[fill=white,style={circle,draw}] (n5) at (24,0)  {\small{5}};
 \node[fill=green,style={circle,draw}] (n6) at (24,-6)  {\small{6}};
 \draw[->, thick] (n1) edge[thick]  node  [near end, above=0.1pt] {{\tiny $1$}} (n3); 
 \draw[->, thick] (n2) edge[thick]  node  [above=0.1pt] {{\tiny $2$}} (n3); 
 \draw[->, thick] (n2) edge[thick] node  [near end, above=0.1pt] {{\tiny $1$}} (n4); 
\draw[->, thick] (n3) edge[thick] node  [near end, above=1pt] {{\tiny$1$}} (n5); 
\draw[->, thick] (n4) edge[thick] node  [near end, above=1pt]  {{\tiny $2$}}  (n6); 
\draw[->, thick] (n3) edge[loop, out=20, in=70, looseness=5] node [above left, pos=.45]  {{\tiny $-3$}}  (n3); 
\draw[->, thick] (n4) edge[loop, out=210, in=260, looseness=5] node [below right, pos=.45]  {{\tiny $-1$}}  (n4); 
\draw[->, thick] (n5) edge[loop, out=20, in=70, looseness=5] node [above left, pos=.45]  {{\tiny $-1$}}  (n5); 
\draw[->, thick] (n6) edge[loop, out=210, in=260, looseness=5] node [below right, pos=.45]  {{\tiny $-2$}}  (n6); 
\end{tikzpicture}  &
\begin{tikzpicture}
 [scale=.15,auto=left, node distance=1.5cm]
 \node[fill=magenta,style={circle,draw}] (n2) at (4,-6) {$\small{[1]}$};
 \node[fill=white,style={circle,draw}] (n4) at (14,-6)  {$\small{[4]}$};
 \node[fill=green,style={circle,draw}] (n6) at (24,-6)  {$\small{[6]}$};
 \draw[->, thick] (n2) edge[thick] node  [near end, above=0.1pt] {{\tiny $1$}} (n4); 
\draw[->, thick] (n4) edge[thick] node  [near end, above=1pt]  {{\tiny $2$}}  (n6); 
\draw[->, thick] (n4) edge[loop, out=210, in=260, looseness=5] node [below right, pos=.45]  {{\tiny $-1$}}  (n4); 
\draw[->, thick] (n6) edge[loop, out=210, in=260, looseness=5] node [below right, pos=.45]  {{\tiny $-2$}}  (n6); 
\end{tikzpicture}  \\
 \\
$H \equiv  G_{-L}$ & $H_{{\mathcal P}}$ 
 \end{tabular}
\caption{Two six-cell networks $G$ and $G_{-L}$ and a partition $\mathcal{P} = \left\{ [1] = \{1,2,3\},\, [4] = \{4,5\},\, [6] = \{6\} \right\}$ of their sets of cells. (Top) The partition $\mathcal{P}$ is exo-balanced but not balanced for $G$. The network $Q_{{\mathcal P}}$ is the three-cell quotient network of $G$ by the exo-balanced partition $\mathcal{P}$. (Bottom) The partition $\mathcal{P}$ is balanced for $H \equiv  G_{-L}$. The network $H_{{\mathcal P}}$ is the quotient network of $H$ by  $\mathcal{P}$.}
\label{f:um}
\end{figure}
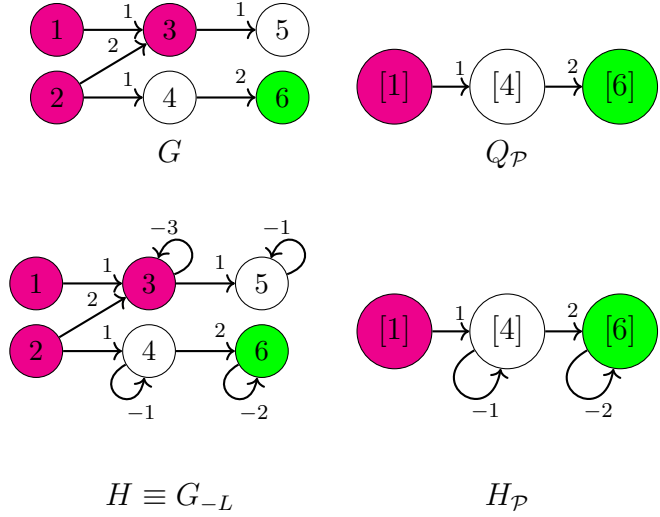

\subsection*{Linear-balanced, even-odd-balanced and odd-balanced tagged partitions} 

We define next, for general weighted networks, the concepts of linear-balanced, even-odd-balanced and odd-balanced partitions. We use here the terminology of linear-balanced and odd-balanced partitions in Definitions 4.17 and  4.6 of \cite{NSS19}, respectively,  for the class of undirected networks $G$. 

\begin{Def} \normalfont   \label{def:linear}
Let $G$ be a weighted network with set of cells $C$.  A 
non-standard
tagged partition $\mathcal{P} = \left\{ P_1, P_2, \ldots, P_p, \overline{P}_1, \overline{P}_2, \ldots, \overline{P}_q, P_0\right\}$ of $C$ is said {\it linear-balanced} (resp. {\it even-odd-balanced})  if the corresponding generalized polydiagonal subspace $\Delta_{\mathcal{P}}$ is left invariant by the Laplacian matrix $L_G$ (resp. adjacency matrix $W_G$) of $G$. We denote by $\mathcal{P}_{G,lin}$ the set of linear-balanced partitions of $G$ and by $\mathcal{P}_{G,eo}$  the set of even-odd-balanced partitions of $G$
\hfill $\Diamond$
\end{Def}

\begin{rem}\normalfont
By Proposition~\ref{prop:subset}, for a regular network $G$, we have $\mathcal{P}_{G,lin} = \mathcal{P}_{G,eo}$.

\hfill $\Diamond$
\end{rem}

\begin{Def} \normalfont   \label{def:odd}
Let $G$ be a weighted network with set of cells $C$. A 
non-standard
tagged partition $\mathcal{P} = \left\{ P_1, P_2, \ldots, P_p, \overline{P}_1, \overline{P}_2, \ldots, \overline{P}_q, P_0\right\}$ of $C$ is said {\it odd-balanced}  if, given an enumeration of  the network set of cells adapted to the partition $\mathcal{P}$ providing a  block structure (\ref{eq:oddbf}) for the adjacency matrix $W_G$, we have\\
(a)  All the blocks, excluding the blocks $Q_{ii}$, $\overline{Q}_{ii}$, $Z_{0j},\, \overline{Z}_{0j}$ and $Z_{00}$, are  regular.\\
(b)  If $q >0$, for $1 \le i,j \le q $, each pair of blocks of the type $Q_{ij},\, \overline{Q}_{ij}$, for $i\not=j$,  $R_{ij},\, \overline{R}_{ij}$ and $Z_{i0},\, \overline{Z}_{i0}$ have the same valency. \\
(c) If $q>0$ and $r=1$, the blocks $Z_{0j},\, \overline{Z}_{0j}$ satisfy ${\mathrm rs}(Z_{0j}) = {\mathrm rs}\left(\overline{Z}_{0j}\right)$ for  $j \in \{1,\ldots,q\}$. \\
(d) If $r=1$,  ${\mathrm rs}\left(Z_{0j}\right)  =  0$,  for $j \in \{q+1,\ldots,p\}$. \\
We denote by $\mathcal{P}_{G,odd}$ the set of odd-balanced partitions of $G$ 
\hfill $\Diamond$
\end{Def}

\begin{rem}\normalfont
In the definition of linear-balanced  and odd-balanced tagged partitions, the blocks $Q_{ii},\,\overline{Q}_{ii}$ for all $i$ and $Z_{00}$ have no restrictions.
\hfill $\Diamond$
\end{rem}

\begin{rem}\normalfont 
(i) Given a weighted network $G$, the conditions in Definition~\ref{def:odd} of odd-balanced tagged partition imply the conditions in Proposition~\ref{thm:mainLaplacian} for the corresponding generalized polydiagonal subspace $\Delta_{\mathcal{P}}$ to be left invariant by the Laplacian matrix $L_G$. Thus we have $\mathcal{P}_{G,odd} \subseteq \mathcal{P}_{G,lin}$. \\
(ii) A linear-balanced partition of a network set of cells does not have to be odd-balanced, as we show in Example~\ref{exs:linear}. \\
(iii) An odd-balanced partition may not be  even-odd-balanced and an even-odd-balanced partition may not be odd-balanced, as we show in Examples~\ref{ex:odd-eo}  and \ref{ex:eo-odd}, respectively.
\hfill $\Diamond$
\end{rem}

\begin{figure}[h!]
\begin{center}
{\tiny 
\begin{tabular}{cc}
\begin{tikzpicture}[->,>=stealth',shorten >=1pt,auto, node distance=1.5cm, thick,node/.style={circle,draw}]
                           \node[node]	(6) at (-6cm, 3cm)  [fill=black!70] {$6$};
			\node[node]	(3) at (-6cm, 1cm)  [fill=black!20] {$3$};
			\node[node]	(1) at (-7cm, 2cm) [fill=white] {$1$};
			\node[node]	(4) at (-5cm, 2cm)  [fill=black!50] {$4$};
			\node[node]	(5) at (-3.5cm, 2cm)  [fill=black!50] {$5$};
			\node[node]	(2) at (-2cm, 2cm) [fill=white] {$2$};
                                                   \path
                                                            
				(4) edge node {} (5)
				(5) edge node {} (4)
				(5) edge node {} (2)
				(2) edge node {} (5)
				(3) edge node {} (4)
				(4) edge node {} (3)
				(1) edge node {} (3)
				(3) edge node {} (1)
				(1) edge node {} (6)
				(6) edge node {} (1)
				(6) edge node {} (4)
				(4) edge node {} (6);    
		\end{tikzpicture}	\qquad & \qquad 
\begin{tikzpicture}[->,>=stealth',shorten >=1pt,auto, node distance=1.5cm, thick,node/.style={circle,draw}]
                           \node[node]	(6) at (-6cm, 3cm)  [fill=black!70] {$6$};
			\node[node]	(5) at (-6cm, 1cm)  [fill=black!70] {$5$};
			\node[node]	(1) at (-7cm, 2cm) [fill=white] {$1$};
			\node[node]	(3) at (-5cm, 2cm)  [fill=black!20] {$3$};
			\node[node]	(4) at (-3.5cm, 2cm)  [fill=black!20] {$4$};
			\node[node]	(2) at (-2cm, 2cm) [fill=white] {$2$};
                                                   \path
                                                            
				(4) edge node {} (2)
				(2) edge node {} (4)
				(3) edge node {} (4)
				(4) edge node {} (3)
				(5) edge node {} (3)
				(3) edge node {} (5)
				(6) edge node {} (3)
				(3) edge node {} (6)
				(1) edge node {} (6)
				(6) edge node {} (1)
				(1) edge node {} (5)
				(5) edge node {} (1);    
		\end{tikzpicture}	
		\end{tabular}}
		\end{center}
		\caption{A six-cell bidirectional network $G$. Two linear-balanced tagged partitions which are not odd-balanced.  
		(Left) $\mathcal{P} = \left\{ P_1 = \{1,2\},\, P_2 = \{ 3\},\, \overline{P}_1 = \{4,5\},\, \overline{P}_2 = \{ 6\} \right\}$;   
		(Right) $\mathcal{P} = \{ P_1 =\{1,2\}, \, \overline{P}_1 = \{ 3, 4\},\, P_0 =\{5,6\}\}$.}
		\label{f:2ndsixSwift}
	\end{figure}
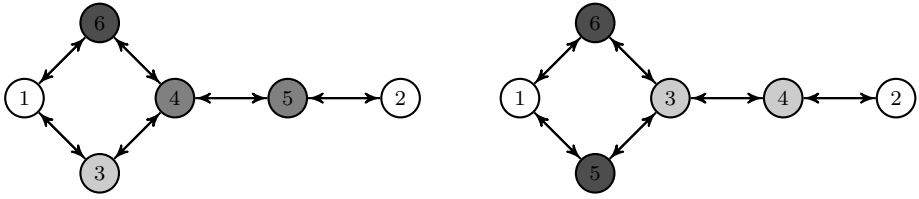

\begin{exams} \label{exs:linear} \normalfont 		
Take the two  isomorphic six-cell networks in Figure~\ref{f:2ndsixSwift} which correspond to the  six-cell  bidirectional network in Figure 9 of \cite{NSS19}.  \\
(i) Consider  the network on the left of  Figure~\ref{f:2ndsixSwift}  and take the tagged partition of its set of cells $\mathcal{P} = \{ P_1 =\{1,2\}, \, P_2 = \{ 3\},\, \overline{P}_1 =\{4,5\}, \, \overline{P}_2 = \{ 6\} \}$. The adjacency matrix has  block form: 
$$
W_{G} =
	\left( 
 \begin{array}{cc|c|cc|c}
0 & 0 & 1& 0 & 0& 1 \\
0 & 0 & 0& 0 & 1& 0 \\
\hline 
1 & 0 & 0& 1 & 0& 0 \\
\hline 
0 & 0 & 1& 0 & 1 & 1 \\
0 & 1 & 0& 1 & 0& 0 \\
\hline 
1 & 0 & 0& 1 & 0& 0 
 \end{array}
 \right) = 
 \displaystyle 
\left( 
 \begin{array}{cc|cc}
 Q_{11} & Q_{12} & R_{11} & R_{12}\\
 Q_{21} & Q_{22} & R_{21} & R_{22}\\
 \hline 
 \overline{R}_{11} & \overline{R}_{12} & \overline{Q}_{11} & \overline{Q}_{12} \\
 \overline{R}_{21} & \overline{R}_{22} & \overline{Q}_{21} & \overline{Q}_{22}
 \end{array}
 \right)
 \, .
 $$	
 We have that $\mathcal{P}$ is linear-balanced. By Corollary~\ref{cor:mainLaplacian}, 
 this follows from the equalities: 
 $$
 {\tiny 
 \begin{array}{l} 
 \left( 
\begin{array}{l}
2\\
 2
  \end{array}
  \right) 
  = 
    {\mathrm rs}(Q_{12}) +  {\mathrm rs}(R_{12})  + 
2  {\mathrm rs}(R_{11})  
= 
 {\mathrm rs}\left(\overline{Q}_{12}\right) +  {\mathrm rs}\left(\overline{R}_{12}\right)  + 
2  {\mathrm rs}\left(\overline{R}_{11}\right)  =   
\left( 
\begin{array}{l}
1\\
 0
  \end{array}
  \right) + 
  \left( 
\begin{array}{l}
1\\
 0
  \end{array}
  \right) +
 2 \left( 
\begin{array}{l}
0\\
 1
  \end{array}
  \right);
  \ \\
   \ \\
 \left( 
\begin{array}{l}
0\\
 0
  \end{array}
  \right) 
  = 
   - {\mathrm rs}(Q_{12}) +  {\mathrm rs}(R_{12}) = -  \left( 
\begin{array}{l}
1\\
 0
  \end{array}
  \right) + 
   \left( 
\begin{array}{l}
1\\
 0
  \end{array}
  \right) 
  =   
 {\mathrm rs}\left(\overline{R}_{12}\right) -  {\mathrm rs}\left(\overline{Q}_{12}\right) = 
  \left( 
\begin{array}{l}
1\\
 0
  \end{array}
  \right) - 
   \left( 
\begin{array}{l}
1\\
 0
  \end{array}
  \right); 
  \ \\
  \ \\
  \left( 
\begin{array}{l}
2
  \end{array}
  \right)   
  =   {\mathrm rs}(Q_{21}) +  {\mathrm rs}(R_{21})  + 
2  {\mathrm rs}(R_{22}) 
=  
 {\mathrm rs}\left(\overline{Q}_{21}\right) +  {\mathrm rs}\left(\overline{R}_{21}\right)  + 
2  {\mathrm rs}\left(\overline{R}_{22}\right) 
=  
\left( 
\begin{array}{l}
1
  \end{array}
  \right) + 
  \left( 
\begin{array}{l}
1
  \end{array}
  \right) +
 2 \left( 
\begin{array}{l}
0
  \end{array}
  \right);\\
  \ \\
  \ \\ 
\left( 
\begin{array}{l}
0
  \end{array}
  \right)  
  =  
  - {\mathrm rs}(Q_{21}) +  {\mathrm rs}(R_{21}) = -  \left( 
\begin{array}{l}
1
  \end{array}
  \right) + 
   \left( 
\begin{array}{l}
1
  \end{array}
  \right) 
   =  
 {\mathrm rs}\left(\overline{R}_{21}\right) -  {\mathrm rs}\left(\overline{Q}_{21}\right) = 
  \left( 
\begin{array}{l}
1
  \end{array}
  \right) - 
   \left( 
\begin{array}{l}
1
  \end{array}
  \right)\, .
  \end{array}
  }
  $$	
  The tagged partition $\mathcal{P}$ is not odd-balanced as, for example, the block $R_{11}$ is not regular.\\ 
\noindent (ii) Consider  the network on the right of  Figure~\ref{f:2ndsixSwift}  and take the tagged partition of its set of cells $\mathcal{P} = \{ P_1 =\{1,2\}, \, \overline{P}_1 = \{ 3, 4\},\, P_0 =\{5,6\}\}$.  
The adjacency matrix has  block form: 
$$
W_{G} =
	\left( 
 \begin{array}{cc|cc|cc}
0 & 0 & 0& 0 & 1 & 1 \\
0 & 0 & 0& 1 & 0& 0 \\
\hline 
0 & 0 & 0& 1 & 1& 1 \\
0 & 1 & 1 & 0 & 0& 0 \\
\hline 
1 & 0 & 1& 0 & 0 & 0 \\
1 & 0 & 1 &0 & 0& 0 
 \end{array}
 \right) = 
 \left( 
 \begin{array}{c|c|c}
 Q_{11}& R_{11} & Z_{10}\\
 \hline 
 \overline{R}_{11} & \overline{Q}_{11} & \overline{Z}_{10} \\
 \hline 
 Z_{01} & \overline{Z}_{01} & Z_{00}
 \end{array}
 \right)
 \, .
 $$	
 By Corollary~\ref{cor:mainLaplacian}, we have that $\mathcal{P}$ is linear-balanced  given the following equalities: 
 $$ 
 {\small 
 \begin{array}{l}
 \left( 
\begin{array}{l}
2\\
 2
  \end{array}
  \right) 
  =    
2  {\mathrm rs}(R_{11}) + {\mathrm rs}(Z_{10}) 
=   
2  {\mathrm rs}\left(\overline{R}_{11}\right) + {\mathrm rs}\left(\overline{Z}_{10}\right)  
=  
2\left( 
\begin{array}{l}
0\\
 1
  \end{array}
  \right) + 
  \left( 
\begin{array}{l}
2\\
 0
  \end{array}
  \right); \\ 
  {\mathrm rs}(Z_{01})  
  =   
  {\mathrm rs}\left(\overline{Z}_{01}\right) = \left( 
\begin{array}{l}
1\\
 1
  \end{array}
  \right) \, .
  \end{array} }
  $$	
   The tagged partition $\mathcal{P}$ is not odd-balanced as, for example, the block $R_{11}$ is not regular. 
  \hfill $\Diamond$ 
  \end{exams}

 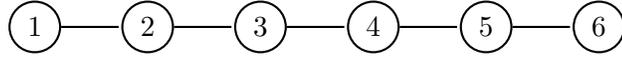
\begin{figure}[ht!]
\begin{center}
\vspace{-4mm}
\hspace{-4mm}
	{\small \begin{tikzpicture}	[-,>=stealth',shorten >=1pt,auto, node distance=1.5cm, thick,node/.style={circle,draw}]
	                 \node[node]	(1) at (-75mm, 4cm)  [fill=white] {$1$};
			\node[node]	(2) at (-6cm, 4cm)  [fill=white] {$2$};
			\node[node] 	(3) at (-45mm, 4cm)  [fill=white] {$3$};
		        \node[node]      (4) at (-30mm, 4cm)  [fill=white] {$4$};
			\node[node]	(5) at (-15mm, 4cm)  [fill=white] {$5$};
			\node[node]	(6) at (0mm, 4cm)  [fill=white] {$6$};			

                                                   \path
				(1) edge node {} (2)
				(2) edge node {} (3)
				(3) edge node {} (4)
				(4) edge node {} (5)
				(5) edge node {} (6)
								;
		\end{tikzpicture}}
		\caption{The network in Figure 6 (iii)  of Neuberger\etal~\cite{NSS19}.}
		\label{fig:6cnetwork_2}
		\end{center}
\end{figure}

\begin{exam} \label{ex:odd-eo} \normalfont 	  
Consider the network $G$ in Figure~\ref{fig:6cnetwork_2} which corresponds to the network in Figure 6 (iii)  of Neuberger\etal~\cite{NSS19}. 
The tagged partition $\mathcal{P} = \{ P_1 =\{1,6\}, \, \overline{P}_1 =\{3,4\} , \, P_0 =\{2,5\} \}$ of the set of cells of $G$ is odd-balanced but not even-odd-balanced. 

Considering the ordering $1,6,3,4,2,5$ of the cells of $G$ adapted to the tagged partition $\mathcal{P}$, the adjacency matrix of $G$ has the following block structure:
$$
W_G =\left(
\begin{array} {cc|cc|cc}
0  & 0 &  0 &  0 &  1 &  0 \\    
0  & 0  & 0  & 0 &  0 &  1 \\  
\hline  
0 &  0  & 0  & 1 & 1 &  0   \\
0  & 0 &  1 &  0 &  0 &  1    \\
\hline
1 &   0  & 1 &  0 &  0 &  0    \\
0  & 1  & 0  & 1  & 0 &  0
\end{array}
\right)
=
\left(
\begin{array} {c|c|c}
Q_{11} & R_{11} & Z_{10}\\
\hline
\overline{R}_{11} & \overline{Q}_{11} & \overline{Z}_{10}\\
\hline
Z_{01} & \overline{Z}_{01} & Z_{00}
\end{array}
\right).
$$
The blocks $R_{11},\, \overline{R}_{11}$ are regular of the same valency $0$ and $Z_{10},\, \overline{Z}_{10}$ are regular of same valency $1$. Moreover, ${\mathrm rs}\left(Z_{01} \right)={\mathrm rs}\left(\overline{Z}_{01} \right) 
=
\left(
\begin{array} {c}
1\\
1
\end{array}
\right).
$ 
Thus, by Definition~\ref{def:odd}, the partition $\mathcal{P}$ is odd-balanced. 

However, by Definition~\ref{def:linear}, $\mathcal{P}$  is not even-odd-balanced as ${\mathrm rs}\left(Q_{11} \right) - {\mathrm rs}\left(R_{11} \right) =
\left(
\begin{array} {c}
0\\
0
\end{array}
\right)
\ne {\mathrm rs}\left(\overline{Q}_{11} \right) - {\mathrm rs}\left(\overline{R}_{11} \right) =
\left(
\begin{array} {c}
1\\
1
\end{array}
\right)
$ and, thus, fails the first condition in (4.7) of Proposition~\ref{thm:mainLaplacian}.
 \hfill $\Diamond$ 
  \end{exam}

\begin{exam} \label{ex:eo-odd} \normalfont 	
Consider the weighted network $G$ with set of cells $\{1,\ldots,8\}$ and adjacency matrix
 $$
W_G =\left(
\begin{array} {cc|cc|cc|cc}
3  & 2 &  2 &  1 &  1 &  1 & 1 & 0 \\    
3  & 1 &  2 &  2 &  1 &  0 & 1 & 1 \\    
\hline  
2  & 2 &  2 &  2 &  1 &  1 & 1 & 0 \\    
1  & 1 &  3 &  3 &  0 &  0 & 2 & 1 \\    
\hline  
-1  & 0 &  2 &  2 &  1 &  1 & 4 & 2 \\    
0  & 1 &  2 &  0 &  2 &  2 & 2 & 2 \\    
\hline  
0  & 1 &  1 &  1 &  3 &  0 & 5 & 0 \\    
0  & 1 &  0 &  2 &  2 &  1 & 1 & 4 \
\end{array}
\right)
=
\left(
\begin{array} {c|c|c|c}
Q_{11} & Q_{12}  & R_{11} & R_{12}\\
\hline
Q_{21} & Q_{22}  & R_{21} & R_{22}\\
\hline
\overline{R}_{11} & \overline{R}_{12}  & \overline{Q}_{11} & \overline{Q}_{12}\\
\hline
\overline{R}_{21} & \overline{R}_{22}  & \overline{Q}_{21} & \overline{Q}_{22}
\end{array}
\right),
$$
which is regular of valency $11$, and consider the tagged partition $\mathcal{P} = \{ P_1 =\{1,2\}, \, P_2 =\{3,4\}, \, \overline{P}_1 =\{5,6\} , \, \overline{P}_2 =\{7,8\} \}$ of the set of cells of $G$.

By Proposition~\ref{thm:mainLaplacian} and  Definition~\ref{def:linear}, $\mathcal{P}$  is even-odd-balanced (linear balanced) as, for $1 \le i,j \le 2$, we have that ${\mathrm rs}\left(Q_{ij} \right) - {\mathrm rs}\left(R_{ij} \right)$ and 
${\mathrm rs}\left(\overline{Q}_{ij} \right) - {\mathrm rs}\left(\overline{R}_{ij} \right)$ are regular of the same valency. Clearly, $\mathcal{P}$ is not odd-balanced as, for example, the block $Q_{12}$ is not regular.
 \hfill $\Diamond$ 
  \end{exam}

In \cite{NSS19}, for the particular class of networks with symmetric $(0,1)$-adjacency matrices (undirected graphs), Neuberger {\it et al.} show in Proposition 5.6 that  in an odd-balanced partition each part $P_r$ and its counterpart $\overline{P}_r$ have the same number of cells. Moreover, they conjecture that this is also true for the linear-balanced partitions. The next example shows that Proposition 5.6 (and, thus, Conjecture 5.3) in \cite{NSS19} does not hold for general weighted networks.

\begin{exam} \label{ex:odd_dif_num} 
Consider the four-cell weighted network $G$ with set of cells $\{1,2,3,4\}$ and adjacency matrix  
$$
W_{G} =
	\left( 
 \begin{array}{c|cc|c}
 0     &   \frac{1}{2}  & \frac{1}{2}  & \frac{6}{5} \\
 \hline 
 1    &                0   & 0               &     \frac{6}{5}   \\
 1   &                0   &               0  &      \frac{6}{5}   \\
 \hline 
 1   &                0   & 1                &     0  
 \end{array}
 \right) =  
 \left( 
 \begin{array}{c|c|c}
 Q_{11} & R_{11} & Z_{10}\\
 \hline
 \overline{R}_{11} & \overline{Q}_{11} & \overline{Z}_{10} \\
 \hline 
 Z_{01} & \overline{Z}_{01} & Z_{00}
 \end{array}
 \right)
 \, .
 $$
Take the tagged partition 
$${\mathcal P} = \left\{ P_1=\{1\}, \overline{P}_1=\{ 2,3\}, P_0=\{ 4\} \right\}$$
and note that the enumeration of the network set of cells is adapted to this partition. 
As $R_{11}, \overline{R}_{11}$ are regular of the same valency, $Z_{10}, \overline{Z}_{10}$ are regular of the same valency and 
${\mathrm rs}\left( Z_{01} \right) = {\mathrm rs} \left( \overline{Z}_{01}\right)$, we have that $\mathcal{P}$  
 is odd-balanced for $G$. Note that $\# P_1 \ne \# \overline{P}_1$. 
\hfill $\Diamond$
\end{exam}

In the following remark, we consider tagged partitions where $r=0$, that is, there is no zero part $P_0$, and we relate the concepts of exo-balanced and odd-balanced partitions given their definitions in Definitions~\ref{def:balanced} (i) and~\ref{def:odd}, respectively. 
Observe that, by definition, an odd-balanced partition is, in particular, a non-standard tagged partition and an exo-balanced partition is standard. So, 
in the next remark we relate the two concepts  of exo-balanced and odd-balanced partitions of a partition  $\mathcal{P}$, with non zero part, by interpreting the $q >0$ counterparts when 
referring to $\mathcal{P}$ as odd-balanced and as independent parts if  interpreting $\mathcal{P}$  as a standard partition.

\begin{rem}\normalfont 
Let $G$ be a weighted network with set of cells $C$ and adjacency matrix $W_G$.  Consider a 
non-standard 
tagged partition $\mathcal{P} =\{ P_1, P_2, \ldots, P_p,$  $\overline{P}_1, \overline{P}_2, \ldots, \overline{P}_q \}$ of $C$ and consider an ordering of the  cells adapted to $\mathcal{P}$ so that $W_G$ has a block form (\ref{eq:oddbf}). 
Set $P_{p+1} = \overline{P}_1, \ldots, P_{p+q} = \overline{P}_q$.
We have:\\
(i) If  $\mathcal{P}$  is odd-balanced  then 
the standard partition $\{ P_1, P_2, \ldots, P_p, P_{p+1}, \ldots, P_{p+q}\}$
  is exo-balanced. The converse is not true. \\
(ii) Assume 
the standard partition $\{ P_1, P_2, \ldots, P_p, P_{p+1}, \ldots, P_{p+q}\}$
 is exo-balanced. Then $\mathcal{P}$ is odd-balanced if and only if  for all $i \not=j$, 
with $1 \leq i,j \leq q$,
 the regular matrices, $Q_{ij}, \overline{Q}_{ij}$  have the same valency, and  for all 
$1 \leq i, j \leq q$,
 the regular matrices $R_{ij}, \overline{R}_{ij}$ have the same valency.
\hfill $\Diamond$
\end{rem}

\subsubsection*{Quotient networks for odd-balanced, linear-balanced and even-odd-balanced partitions}
We define next the concepts of quotient networks for weighted networks by odd-balanced, linear-balanced and even-odd-balanced partitions. In the next section, we show an  application of these concepts to coupled cell systems with additive linear input. 

\begin{Def}\label{def:quo_odd}
Let $G$ be a network with set of cells $C = \{1, \ldots, n\}$ and  $\mathcal{P}$ a 
non-standard
tagged partition of $C$ with $p+q+1$ parts, $P_1, P_2, \ldots, P_p, \overline{P}_1, \overline{P}_2, \ldots, \overline{P}_q, P_0$ and recall Definitions~\ref{def:linear} and~\ref{def:odd}. \\
(i) If $\mathcal{P}$ is odd-balanced, we define the {\it symbolic quotient} of $G$ by the odd-balanced partition $\mathcal{P}$ to be the $(p+q+1)$-cell network, where the cells are the parts of $\mathcal{P}$ and the edges are defined in the following way. 
For $i, j_1=1, \ldots, p$ and $j_2=1, \dots, q$ 
such that $i\not=j_1$, there are directed edges from $P_{j_1}$ ($\overline{P}_{j_2}$) to $P_i$ with weight the valency of $Q_{i,,j_1}$ ($R_{i,j_2}$).
For $i=1, \ldots, p$,  there are directed edges from $P_0$ to $P_i$ with the valency of $Z_{i0}$. \\
(ii)  If $\mathcal{P}$ is linear balanced, take an enumeration of the network set of cells adapted to the tagged partition $\mathcal{P}$  so that the  the adjacency matrix $W_G$ of $G$ has a  block structure (\ref{eq:oddbf}) satisfying the conditions (i)-(v)  of Corollary~\ref{cor:mainLaplacian}. We call the following matrix the adjacency matrix of 
the {\it symbolic quotient} of $G$ by the linear balanced partition  $\mathcal{P}$:
\ \\
\begin{equation}
\left( 
 \begin{array}{cccc|c}
0 & q_{12} &  \cdots &q_{1p} & r_{1} \\
 \vdots& \vdots & \cdots &\vdots & \vdots \\
q_{p1} & q_{p2} & \cdots & 0 & r_{p} 
 \end{array}
 \right)\, .
 \label{eq:quolin}
 \end{equation}
 (iii) If $\mathcal{P}$ is even-odd-balanced, take an enumeration of the network set of cells adapted to the tagged partition $\mathcal{P}$  so that the  the adjacency matrix $W_G$ of $G$ has a  block structure (\ref{eq:oddbf}) satisfying the conditions (\ref{eq:equal_Lap})-(\ref{eq:equal_Lap_Z})  of Proposition~\ref{thm:mainLaplacian}. Denoting by $q_{ij}$ the valency of 
 ${\mathrm rs}\left(Q_{ij}\right) - {\mathrm rs}\left( R_{ij}\right)$ for $1 \leq i \leq p,\ 1 \leq  j \leq q$, of  ${\mathrm rs}\left(Q_{ij}\right)$ if $1 \leq i \leq p;\   q+1 \leq j \leq p$, then we call the following matrix the adjacency matrix of 
the {\it symbolic quotient} of $G$ by the even-odd-balanced partition  $\mathcal{P}$:
\ \\
\begin{equation}
\left( 
 \begin{array}{cccc}
q_{11} & q_{12} &  \cdots &q_{1p}  \\
 \vdots& \vdots & \cdots &\vdots  \\
q_{p1} & q_{p2} & \cdots & q_{pp}  
 \end{array}
 \right)\, .
 \label{eq:quoeo}
 \end{equation}
\hfill $\Diamond$
\end{Def}

\begin{exam}\normalfont \label{ex:simples}
Take the three-cell network $G$ at the left of Figure~\ref{fig:odd3cell} and the tagged partition 
$$\mathcal{P} = \left\{P_1 = \{1\},\, \overline{P}_1 = \{2,3\}  \right\}\, .$$ 
The adjacency matrix of $G$ has the block form 
$$\left( 
 \begin{array}{c|cc}
 0&1&1\\
 \hline 
 2&0&0\\
 2&0&0
 \end{array}
 \right) = 
 \left( 
 \begin{array}{c|c}
 Q_{11}&R_{11}\\
 \hline 
 \overline{R}_{11}& \overline{Q}_{11}
 \end{array}
 \right),$$
where the block $R_{11} = (1 \, 1)$ is regular of valency $2$, that is, it has row sum $2$ and the block $\overline{R}_{11} = (2 \, 2)^t$ is regular of valency $2$, since the entry of each row is $2$. The partition  $\mathcal{P}$ is exo-balanced since $R_{11}$ and $\overline{R}_{11}$ are regular. The quotient network is network $Q_2$ in Figure~\ref{fig:odd3cell}. 
As $R_{11}$ and $\overline{R}_{11}$ are regular of the same valency, we have that $\mathcal{P}$ is odd-balanced. The symbolic quotient network is network $Q_1$ in Figure~\ref{fig:odd3cell}. If the entries of the block $ \overline{R}_{11}$ were $3$, instead of $2$, then the partition $\mathcal{P}$ would also be exo-balanced but not odd-balanced. 
\hfill $\Diamond$ 
\end{exam}

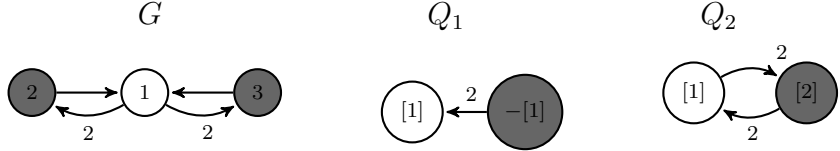
\begin{figure}[h!]
\begin{center}
\begin{tabular}{ccc}
$G$ & $Q_1$ & $Q_2$ \\
 \begin{tikzpicture}[->,>=stealth',shorten >=1pt,auto, node distance=1.5cm, thick,node/.style={circle,draw}]
			\node[node]	(2) at (-5cm, 2cm)  [fill=black!60] {{\tiny $2$}};
			\node[node]	(1) at (-35mm, 2cm)  [fill=white] {{\tiny $1$}};
			\node[node]	(3) at (-20mm, 2cm) [fill=black!60] {{\tiny $3$}};
                                                   \path
                                                            
				(2) edge node {} (1)
				(3) edge node {} (1)
				 (1) [->] edge[bend left=30, thick]  node {{\tiny $2$}}    (2)    
				(1) [->] edge[bend right=30, thick]  node [below right, pos=.35]  {{\tiny $2$}}    (3);    
		\end{tikzpicture}	\qquad & \qquad 
		\begin{tikzpicture}[->,>=stealth',shorten >=1pt,auto, node distance=1.5cm, thick,node/.style={circle,draw}]
			\node[node]	(1) at (-35mm, 2cm)  [fill=white] {{\tiny $[1]$}};
			\node[node]	(2) at (-20mm, 2cm) [fill=black!60] {{\tiny $-[1]$}};
                                                   \path                 
				(2) edge node [above right, pos=.50, near end]  {{\tiny $2$}} (1);
		\end{tikzpicture}	\qquad & \qquad 
	\begin{tikzpicture}[->,>=stealth',shorten >=1pt,auto, node distance=1.5cm, thick,node/.style={circle,draw}]
			\node[node]	(1) at (-35mm, 2cm)  [fill=white] {{\tiny $[1]$}};
			\node[node]	(2) at (-20mm, 2cm) [fill=black!60] {{\tiny $[2]$}};
                                                   \path
				(2) edge[bend left=30, thick]  node [below right, near end]  {{\tiny $2$}} (1) 
				(1) edge[bend left=30, thick]  node [above right, near end]  {{\tiny $2$}} (2);
		\end{tikzpicture}		
		
		\end{tabular}
		\end{center}
		\caption{A three-cell regular network $G$ of valency two. The tagged partition $\mathcal{P} = \left\{[1] = P_1 = \{1\},\, [2] = \overline{P}_1 = \{2,3\} \equiv -[1] \right\}$ is odd-balanced  and exo-balanced for $G$. On the center we see the symbolic quotient network $Q_1$ of $G$ by the odd-balanced tagged partition $\mathcal{P}$. On the right, $Q_2$ is the quotient network of $G$ by the exo-balanced partition $\mathcal{P}$.}
		\label{fig:odd3cell}
	\end{figure}

\begin{exam}\normalfont 
Take the six-cell network $G$ in Figure~\ref{f:sixSwift} and  the tagged partition of the network set of cells $\mathcal{P} = \{ [1] = P_1 =\{1\}, \, -[1] = \overline{P}_1 = \{ 2\},\, [3] = P_0 =\{3,4,5,6\}\}$. This network is the bidirectional network in Figure 9 of \cite{NSS19}.  The adjacency matrix of $G$ has  the following block form: 
$$
W_{G} =
	\left( 
 \begin{array}{c|c|cccc}
0 & 0 & 1& 1 & 0 & 0 \\
\hline 
0 & 0 & 1& 1 & 0 & 0 \\
\hline 
1 & 1 & 0& 0 & 0& 0 \\
1 & 1 & 0& 0 & 1 & 0 \\
0 & 0 & 0& 1 & 0& 1 \\
0 & 0 & 0& 0 & 1 & 0 
 \end{array}
 \right) = 
 \left( 
 \begin{array}{c|c|c}
 Q_{11} & R_{11} & Z_{10}\\
 \hline 
 \overline{R}_{11} & \overline{Q}_{11} & \overline{Z}_{10} \\
 \hline 
 Z_{01} & \overline{Z}_{01} & Z_{00}
 \end{array}
 \right)
 \, .
 $$	
 Note that, both $R_{11}$ and $\overline{R}_{11}$ are regular of valency $0$ and $Z_{10}$ and $\overline{Z}_{10}$ are regular of valency $2$. Moreover, 
 $
 {\mathrm rs}(Z_{01}) = {\mathrm rs}\left(\overline{Z}_{01}\right) = (1 \,  1 \,   0 \,   0)^t$. We have that $\mathcal{P}$ is odd-balanced.  See in Figure~\ref{f:sixSwift} the symbolic quotient of $G$ by the odd-balanced partition $\mathcal{P}$. However, $\mathcal{P}$ is not exo-balanced precisely because of the above equality: cells $3,4$ of the class $P_0$ receive one input from cells in the class $P_1$ (resp. $\overline{P}_1$)), whereas cells $5,6$ receive no inputs from cells in the class $P_1$ (resp. $\overline{P}_1$). 
 \hfill $\Diamond$ 
\end{exam}

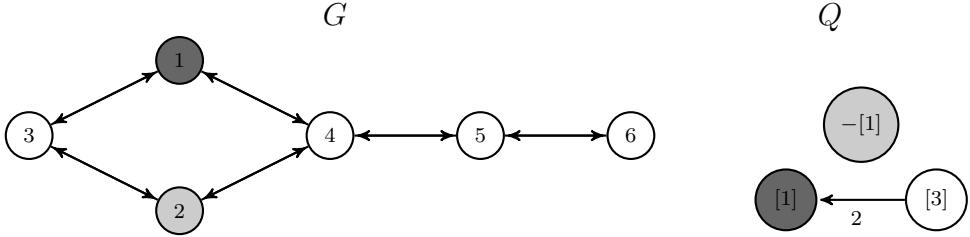
\begin{figure}[h!]
\begin{center}
\begin{tabular}{cc}
$G$ & $Q$  \\
\begin{tikzpicture}[->,>=stealth',shorten >=1pt,auto, node distance=1.5cm, thick,node/.style={circle,draw}]
                           \node[node]	(1) at (-5cm, 3cm)  [fill=black!60] {{\tiny $1$}};
			\node[node]	(2) at (-5cm, 1cm)  [fill=black!20] {{\tiny $2$}};
			\node[node]	(3) at (-7cm, 2cm) [fill=white] {{\tiny $3$}};
			\node[node]	(4) at (-3cm, 2cm)  [fill=white] {{\tiny $4$}};
			\node[node]	(5) at (-1cm, 2cm)  [fill=white] {{\tiny $5$}};
			\node[node]	(6) at (1cm, 2cm) [fill=white] {{\tiny $6$}};
                                                   \path
                                                            
				(4) edge node {} (5)
				(5) edge node {} (4)
				(5) edge node {} (6)
				(6) edge node {} (5)
				(3) edge node {} (1)
				(1) edge node {} (3)
				(3) edge node {} (2)
				(2) edge node {} (3)
				(1) edge node {} (4)
				(4) edge node {} (1)
				(2) edge node {} (4)
				(4) edge node {} (2);    
		\end{tikzpicture}	\qquad & \qquad 
\begin{tikzpicture}[->,>=stealth',shorten >=1pt,auto, node distance=1.5cm, thick,node/.style={circle,draw}]
			\node[node]	(1) at (-40mm, 2cm)  [fill=black!60] {{\tiny $[1]$}};
			\node[node]	(3) at (-20mm, 2cm) [fill=white] {{\tiny $[3]$}};
			\node[node]	(2) at (-30mm, 3cm)  [fill=black!20] {{\tiny $-[1]$}};
                                                   \path
				(3) edge[thick]  node [below right, near end]  {{\tiny $2$}} (1);
		\end{tikzpicture}		
		
		\end{tabular}
		\end{center}
		\caption{A six-cell bidirectional network $G$. The tagged partition $\mathcal{P} = \left\{[1]= P_1 = \{1\},\, -[1]= \overline{P}_1 = \{2\},\, [3] = P_0 = \{ 3,4,5,6\} \right\}$ is odd-balanced for $G$ but not exo-balanced.  On the right, $Q$ is the symbolic quotient network of $G$ by the odd-balanced partition $\mathcal{P}$.}
		\label{f:sixSwift}
	\end{figure}

\section{Coupled cell systems with additive input structure}\label{sec_CCNS}

Let $G$ be an $n$-cell network with weighted adjacency matrix $W_G$. We consider the cells of $G$  as individual dynamical systems, given by ordinary differential equations. We assume that the cells are all of the same type, that is, have the same phase space and the same internal dynamics. The dynamical systems that we associate to $G$  are such that  the couplings between the cells, the way they influence the dynamical evolution of each other, are determined by the edges of $G$ and corresponding weights. These are called {\it coupled cell systems}. More precisely, we take  a cell to be a system of ordinary differential equations and we consider coupled cell systems with {\it additive input structure}~\cite{F15,BF17}. Let $C=\{1, \ldots,n\}$ be the set of cells of $G$ where each cell $c$ has phase space $P_c=\R^k$.  A coupled cell system  with {\it additive input structure} is given by $\dot{x} = f(x)$, where $f = (f_1, \ldots, f_n)$ so that the equation  $\dot{x}_j = f_j(x)$ is associated with cell $j$ and it has the form: 
\begin{equation} 
\dot{x}_j=g(x_j) +\sum_{i=1}^n {w_{ji}h\left(x_j,x_i\right)} \quad \left( j=1, \ldots, n\right)
\label{eq:EDOsystem}
\end{equation}
where   $g:\R^k \rightarrow \R^k$ and $h:\R^k \times \R^k \rightarrow \R^k$ are smooth functions; each $w_{ji}\in \R$ is the value of the weight of the coupling strength from cell $i$ to cell $j$. The function $g$ characterizes the {\it internal dynamics} and the function $h$ is the \textit{coupling function}.  Systems of ordinary differential equations where cells are governed by equations of the form  (\ref{eq:EDOsystem}) are said to be $G$-{\it admissible} as they encode the network structure of $G$. 

\begin{rem} \normalfont 
The difference-coupled vector fields considered in Neuberger {\it et al.}~\cite{NSS19} are a particular class of input additive coupled cell systems where the coupling matrix is a symmetric $(0,1)$-matrix and $h(u,v) = \tilde h(u-v)$, for some function $\tilde h$. 
\hfill $\Diamond$
\end{rem}

\subsection{Additive coupled cell systems and additional restrictions}

We present next four subclasses of coupled cell systems with additive input structure associated with general weighted networks where restrictions are imposed on the internal dynamics and coupling functions. The first three subclasses are  an extension to general weighted networks of Definition 2.2 in Neuberger {\it et al.}~\cite{NSS19} for $0-1$ undirected networks. 

\begin{Def} \normalfont   
Let $G$ be an $n$-cell coupled cell network with weighted adjacency matrix $W_G$. Given a choice of cells phase spaces, take an input additive coupled cell system admissible by $G$ as  defined by (\ref{eq:EDOsystem}) where $w_{ji}$ is the entry $ji$ of $W_G$. Denote by $I_G$ the set of these coupled cell systems admissible by $G$ and define the following subsets: \\
(i) $I_{G,0} = \{ f \in I_G |\ h(x,y) = 0 \mbox{ if } x=y\}$ is the set of {\it exo-input-additive coupled cell systems}.\\
(ii) $I_{G,odd} = \{f \in I_{G,0} |\ g, h \mbox{ are odd} \}$ is the set of {\it odd-input-additive coupled cell systems}.\\
(iii) $I_{G,l} = \{ f \in I_{G,0}|\ g \mbox{ is odd and } h \mbox{ is linear}\}$ is the set of {\it linear-input-additive coupled cell systems}.\\
(iv) $I_{G,eo} = \{ f \in I_{G}|\ g \mbox{ is odd; }  h \mbox{ is even in $x$ and odd in $y$}\}$ is the set of {\it even-odd-input-additive coupled cell systems}.
 \hfill $\Diamond$
\end{Def}

\begin{rem} \normalfont (i) For any choice of cells phase spaces, we have that $I_{G,0}$ is a proper subspace of $I_G$. It follows in particular that it is natural to predict the existence of  subspaces that are flow-invariant under any coupled cell system for $G$ with additive input structure (for any choice of cell phase spaces) in $I_{G,0}$ which will not have that property in $I_G$. This issue is addressed in the next section.\\ 
(ii) Note that, in (\ref{eq:EDOsystem}), if the coupling function $h$ is linear then $h(-x,-y) = -h(x,y)$ for all $x,y$. Thus we have the following inclusions: $I_{G,l} \subseteq I_{G,odd} \subseteq I_{G,0} \subset I_G$. Moreover, the conditions defining  $I_{G,l}$ imply that the linear coupling function $h$ satisfies $h(x,-x) = 2h(x,0)$, for all $x \in \R^k$, as $h(x,-x) = h(x,0) + h(0,-x) = h(x,0) - h(0,x)$ and from $h(x,x) = 0$, we have that $h(0,x) = -h(x,0)$. 
\hfill $\Diamond$
\end{rem}

We describe now the general form of the smooth coupling functions taking the restrictions of $I_{G,0}, I_{G,odd}, I_{G,l}$ 
and $I_{G,eo}$.

\begin{prop} \label{prop:gen_form_h}
Take $a = (a_1, \ldots, a_k) \in \R^k$ and $b = (b_1, \ldots, b_k) \in \R^k$. The coupling function $h$ in (\ref{eq:EDOsystem}) has the following form:\\
(i) If $f$ in $I_{G,0}$ then 
$$
h(a,b) = (a_1-b_1) l_1(a,b) + \cdots + (a_k-b_k) l_k(a,b), 
$$
where for $j=1, \ldots, k$, the function $l_j:\, \R^k \times \R^k \to \R^k$ is smooth. \\
(ii) If $f$ in $I_{G,odd}$ then 
$$
h(a,b) = (a_1-b_1) m_1(a_1^2, \ldots, a_k^2, b_1^2, \ldots, b_k^2) + \cdots + (a_k-b_k) m_k(a_1^2, \ldots, a_k^2, b_1^2, \ldots, b_k^2), 
$$
where for $j=1, \ldots, k$, the function $m_j:\, \R^k \times \R^k \to \R^k$ is smooth. \\
(iii) If $f$ in $I_{G,l}$ then 
$$
h(a,b) = (a_1-b_1) (a_{11}, \ldots, a_{1k}) + \cdots + (a_k-b_k) (a_{k1}, \ldots, a_{kk})
$$
where $a_{ij} \in \R$ for all $i,j=1, \ldots, k$.  \\
(iv) If $f$ in $I_{G,eo}$ then 
$$
h(a,b) = b_1 m_1(a_1^2, \ldots, a_k^2, b_1^2, \ldots, b_k^2) + \cdots + b_k m_k(a_1^2, \ldots, a_k^2, b_1^2, \ldots, b_k^2), 
$$
where for $j=1, \ldots, k$, the function $m_j:\, \R^k \times \R^k \to \R^k$ is smooth. \\
\end{prop}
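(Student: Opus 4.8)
The plan is to characterize each coupling function by encoding the defining restrictions of the subclass and then invoking Hadamard's lemma (the smooth division lemma) in a suitable set of variables. Recall that Hadamard's lemma says: if $F:\R^N \to \R$ is smooth and $F$ vanishes on the hyperplane $\{t_j = 0\}$, then $F(t) = t_j\, \tilde F(t)$ for some smooth $\tilde F$; and if $F$ is smooth and even in a variable $t_j$, then $F(t) = \hat F(t_1, \ldots, t_j^2, \ldots, t_N)$ for some smooth $\hat F$. I will apply these componentwise, since $h = (h^{(1)}, \ldots, h^{(k)})$ and each component is a scalar-valued smooth function of $(a,b) \in \R^k \times \R^k = \R^{2k}$.

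For part (i), the hypothesis $f \in I_{G,0}$ says precisely $h(x,x) = 0$ for all $x \in \R^k$. The idea is to write the difference telescopically: for fixed $(a,b)$, consider the path from $b$ to $a$ passing through the intermediate points $c^{(0)} = b$, $c^{(j)} = (a_1, \ldots, a_j, b_{j+1}, \ldots, b_k)$, so that $c^{(k)} = a$. Then $h(a,b) = h(a, c^{(0)})$, and I want to express $h(a,b)$ as $\sum_j (a_j - b_j) l_j(a,b)$. The cleanest route is to first reduce to the ``diagonal'' vanishing: define $G(a,b) = h(a,b)$ and note $G$ vanishes when $a = b$. Introduce new coordinates $u_j = a_j - b_j$, $v_j = b_j$ (an invertible linear change); in these coordinates $h$ becomes a smooth function $\tilde h(u,v)$ with $\tilde h(0, v) = 0$ for all $v$. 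Applying Hadamard's lemma iteratively in $u_1, \ldots, u_k$ — peeling off one variable at a time, since $\tilde h$ vanishes on $\{u_1 = 0\}$ gives $\tilde h = u_1 \tilde h_1 + (\text{remainder vanishing on } u_1 = 0 \text{... })$ — actually more directly, the standard multivariate Hadamard lemma gives $\tilde h(u,v) = \sum_{j=1}^k u_j\, \tilde l_j(u,v)$ with $\tilde l_j$ smooth. Transforming back, $u_j = a_j - b_j$ and $\tilde l_j(u,v) = l_j(a,b)$, which is the claimed form.

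For parts (ii) and (iv) the plan is to combine (i) (resp.\ a variant for the even case) with the evenness/oddness structure. If $f \in I_{G,odd}$, then $h$ is odd, meaning $h(-a,-b) = -h(a,b)$. Starting from $h(a,b) = \sum_j (a_j - b_j) l_j(a,b)$ from part (i), I would average: replace $l_j(a,b)$ by $\tfrac12\big(l_j(a,b) + l_j(-a,-b)\big)$, which does not change the sum $\sum_j(a_j-b_j)l_j(a,b)$ because oddness of $h$ forces $\sum_j (a_j - b_j)\big(l_j(a,b) - l_j(-a,-b)\big) = 0$ — wait, that needs care, so instead I argue: the $l_j$ can be chosen so that each $l_j(a,b)$ depends only on the products and squares. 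The slick approach: any smooth function invariant under $(a,b) \mapsto (-a,-b)$ is a smooth function of the quadratic invariants $a_i a_j, a_i b_j, b_i b_j$; but here I want only $a_i^2, b_i^2$. This requires the \emph{separate} evenness in each variable, which is exactly what the condition ``$h$ even in $x$ and odd in $y$'' gives in part (iv): write $h(a,b) = \sum_j b_j\, n_j(a,b)$ by Hadamard applied in the $b$-variables (since $h(a,0) = 0$ when $h$ is odd in $y$, as $h(a,0) = -h(a,0)$), then use that $h$ even in $a$ and odd in $b$ forces each $n_j$ to be even in every $a_i$ and even in every $b_i$, hence $n_j(a,b) = m_j(a_1^2,\ldots,a_k^2,b_1^2,\ldots,b_k^2)$ by repeated application of the even-Hadamard lemma. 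For (ii), odd $h$ with $h(x,x)=0$: start from $h(a,b) = \sum_j(a_j-b_j)l_j(a,b)$ and argue $l_j$ can be taken even in each coordinate separately by an averaging/symmetrization argument exploiting that $h(-a,-b) = -h(a,b)$ together with the $a \leftrightarrow b$ near-antisymmetry, then apply even-Hadamard.

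For part (iii), $f \in I_{G,l}$ means $h$ is linear and $h(x,x) = 0$. A linear map $h:\R^k \times \R^k \to \R^k$ has the form $h(a,b) = Aa + Bb$ for constant matrices $A, B \in M_k(\R)$; the condition $h(x,x) = 0$ gives $A + B = 0$, i.e.\ $B = -A$, so $h(a,b) = A(a-b)$, and writing the columns of $A$ as $(a_{1j}, \ldots, a_{kj})^T$ — or rather, $h(a,b) = \sum_j (a_j - b_j)\, A e_j$ where $A e_j = (a_{1j}, \ldots, a_{kj})^T$ — gives exactly the stated form $h(a,b) = \sum_j (a_j - b_j)(a_{j1}, \ldots, a_{jk})$ after relabeling indices. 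This part is immediate linear algebra.

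I expect the main obstacle to be part (ii): proving that the smooth coefficient functions $l_j$ in the representation $h(a,b) = \sum_j(a_j-b_j)l_j(a,b)$ can be chosen to depend only on the squared variables $a_i^2, b_i^2$. The representation from Hadamard's lemma is not unique, and oddness of $h$ only constrains the \emph{combination} $\sum_j(a_j-b_j)l_j$, not the individual $l_j$. The fix is a symmetrization argument: replace $l_j(a,b)$ by an average over the sign-flip group acting diagonally, checking that the correction terms cancel because of the identity $(a_j - b_j) \mapsto -(a_j - b_j)$ under the global flip — so one genuinely only gets evenness under the \emph{single} involution $(a,b)\mapsto(-a,-b)$ for free. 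To upgrade to separate evenness in each $a_i$ (and each $b_i$), one must exploit more: the cleanest path is to set up the representation directly in a way that respects the structure, e.g.\ by working with $h$ restricted appropriately or by a more careful choice in the Hadamard step (Taylor expanding with integral remainder and observing the integrand inherits the needed parity). I would present this carefully, flagging that the analogous statement and proof appear in Neuberger, Sieben and Swift~\cite{NSS19} in the scalar case, and that the vector-valued generalization is componentwise and routine once the parity bookkeeping is set up.
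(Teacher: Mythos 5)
Your parts (i) and (iii) are correct and essentially coincide with the paper's route: the paper obtains (i) from the Hadamard-type division lemma (Lemma 3.1 of Chapter II of \cite{GS85}), which is exactly your change of variables $u=a-b$ followed by smooth division, and (iii) is the same elementary linear-algebra observation ($h(a,b)=A(a-b)$).

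The genuine gap is in (ii) and (iv), and you have correctly located it without closing it. From (i) you have $h(a,b)=\sum_j (a_j-b_j)\,l_j(a,b)$ with non-unique $l_j$, and the only symmetry the hypothesis gives you is the single involution $(a,b)\mapsto(-a,-b)$. Averaging over that involution can at best make the coefficients invariant under the simultaneous sign flip, i.e.\ smooth functions of the quadratic invariants $a_ia_j,\ a_ib_j,\ b_ib_j$, not of the squares $a_i^2,b_i^2$ alone, and no cleverer choice in the Hadamard step (integral remainder or otherwise) can rescue this: the target normal form is simply not attainable from the global-parity hypothesis. Already for $k=1$, $h(a,b)=(a-b)\,ab$ is smooth, satisfies $h(x,x)=0$ and $h(-a,-b)=-h(a,b)$, yet there is no smooth $m$ with $ab=m(a^2,b^2)$ off the diagonal (compare $(a,b)=(1,2)$ and $(1,-2)$); an analogous example defeats the coordinatewise-parity reading as well. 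So your proposed fixes (``symmetrization over the sign-flip group'', ``the integrand inherits the needed parity'') cannot work as described, and the same defect sits in your step for (iv), where you assert that evenness of $h$ in the vector variable $a$ ``forces each $n_j$ to be even in every $a_i$'': it does not, both because the $n_j$ from the division lemma are not unique and because evenness under $a\mapsto -a$ is strictly weaker than separate evenness in each $a_i$. Completing (ii) and (iv) would require either a more restrictive reading of the parity hypotheses (e.g.\ the difference-coupled form $h(x,y)=H(x-y)$ of \cite{NSS19}, where $H$ odd gives $H(s)=s\,m(s^2)$) or a strengthened statement; as written, your proposal leaves these two parts unproved. For comparison, the paper itself dismisses (ii) as following ``trivially'' from (i) and (iv) as following ``trivially from the symmetry of $h$'', so the difficulty you flagged is real and is not resolved there either; but a proof it is not.
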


\begin{proof}
The proof of (i) follows from an adaptation of Lemma~3.1 in Chapter II  of \cite{GS85} (see \cite[Exercise II 3.3]{GS85}). Trivially, (ii) and (iii) follow from (i).  
The proof of (iv) follows trivially from the symmetry of $h$, that is, $h(a,b)$ must be even in $a$ and odd in $b$.
\end{proof}

\begin{prop} \label{prop:reg_nreg_L_W}
Let $G$ be an $n$-cell weighted network with adjacency matrix $W_G$ and Laplacian matrix $L_G$. In  (\ref{eq:EDOsystem}), assume  $k=1$, that is, assume the cell phase spaces to be $\R$. \\
(i) The linear subspace of the linear vector fields in $I_{G,0},\ I_{G, odd},\ I_{G, l}$ is  $< \mathrm{id}_n,\, L_G>$ 
and  in $I_{G,eo}$ is  $<\mathrm{id}_n,\, W_G>$.\\
(ii) If $G$ is regular, then we have that the linear subspace of the linear vector fields in $I_{G},\, I_{G,0},\ I_{G, odd},\ I_{G, l}, I_{G,eo}$ is  $<\mathrm{id}_n,\, W_G>  = < \mathrm{id}_n,\, L_G>$.
\end{prop}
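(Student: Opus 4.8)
The plan is to analyze directly the linear vector fields in each of the five classes when $k=1$, i.e. when $g:\R\to\R$ and $h:\R^2\to\R$. A linear vector field in $I_G$ has the form $\dot x_j = \alpha x_j + \sum_i w_{ji}h(x_j,x_i)$ where $g(x)=\alpha x$ and $h$ is linear, say $h(x,y)=\beta x+\gamma y$ for real constants $\alpha,\beta,\gamma$. Writing the right-hand side as a matrix acting on $x=(x_1,\dots,x_n)^T$, the coupling term contributes $\beta\,\mathrm{D}_G\,x + \gamma\,W_G\,x$ (recall $\mathrm{D}_G$ is the diagonal matrix of input valencies, since $\sum_i w_{ji}x_j = (\sum_i w_{ji})x_j = v(j)x_j$), so the full linear vector field is $(\alpha I + \beta \mathrm{D}_G + \gamma W_G)x$. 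Thus the space of linear vector fields in $I_G$ is spanned by $\{I,\mathrm{D}_G,W_G\}$; in general this is $3$-dimensional (for non-regular $G$), which is exactly why $I_{G}$ itself is not mentioned in part~(i).

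For part~(i): imposing $h(x,x)=0$, i.e. $f\in I_{G,0}$, forces $\beta+\gamma=0$ for a linear $h$, so $h(x,y)=\gamma(x-y)$ and the coupling term becomes $\gamma(\mathrm{D}_G-W_G)x=\gamma L_G x$; hence linear fields in $I_{G,0}$ are spanned by $\{I,L_G\}$. A linear map is automatically odd, so the same computation and the same span hold for $I_{G,odd}$ and $I_{G,l}$ (these classes differ from $I_{G,0}$ only by conditions that a linear $h$ satisfies for free). This proves the first assertion. For $I_{G,eo}$: here $g$ odd is free for a linear $g$, and $h$ must be even in $x$ and odd in $y$; for a linear $h(x,y)=\beta x+\gamma y$, evenness in $x$ forces $\beta=0$, so $h(x,y)=\gamma y$ and the coupling term is $\gamma W_G x$ (the valency diagonal drops out because there is no $x_j$-dependence), giving span $\{I,W_G\}$. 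This proves the second assertion of~(i).

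For part~(ii): if $G$ is regular with valency $v_w$, then $\mathrm{D}_G=v_w I$, so $\{I,\mathrm{D}_G,W_G\}$ already collapses to $\langle I, W_G\rangle$, and $L_G=v_w I - W_G$ gives $\langle I,L_G\rangle=\langle I,W_G\rangle$; combining with part~(i), all five classes have the same two-dimensional (or lower, if $W_G$ is itself a scalar) space of linear vector fields, namely $\langle\mathrm{id}_n,W_G\rangle=\langle\mathrm{id}_n,L_G\rangle$. The only mild subtlety — not really an obstacle — is bookkeeping: one should note that a linear function $\R\to\R$ has no constant term, so there is no additive-constant freedom to worry about, and that the claimed equalities of spans are equalities of linear spans (they are automatic once the spanning sets are shown to generate the same subspace). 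The main thing to get right is simply the identification of the coupling term's matrix form, in particular that $\sum_i w_{ji}h(x_j,x_i)$ produces $\mathrm{D}_G$ rather than something else from the $x_j$-part of $h$; everything else is immediate.
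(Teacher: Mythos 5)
Your proposal is correct and takes essentially the same approach as the paper: you write a linear coupling as $h(x,y)=\beta x+\gamma y$, observe that $h(x,x)=0$ forces $h(x,y)=\gamma(x-y)$ so the field is $\alpha\,\mathrm{id}_n+\gamma L_G$, while evenness in $x$ forces $h(x,y)=\gamma y$ giving $\alpha\,\mathrm{id}_n+\gamma W_G$, and you note the converse inclusions and the collapse $D_G=v_wI$ in the regular case, exactly as in the paper's proof (which obtains the same normal forms for $h$ via its Proposition~\ref{prop:gen_form_h}).
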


\begin{proof} (i) If $f \in  I_{G,0}$ is linear, that is, both $g,h$ are  linear and $h(x,x) =0$ for all $x \in \R$, we have $f(x) = \alpha x $ and $h(x,y) = \beta (x-y) $ for all $x,y \in \R$. 
This follows trivially from Proposition~\ref{prop:gen_form_h} (iii). Thus 
$$
\begin{array}{rcl}
f_i (x_1, \ldots, x_n) & = & \alpha x_i + \beta \sum_{j\not=i; j=1}^n w_{ij} (x_i -x_j) =  \alpha x_i + \beta \sum_{ j=1}^n w_{ij} (x_i -x_j)\\
& = &  \alpha x_i + \beta x_i \sum_{j=1}^n w_{ij} - \beta \sum_{j=1}^n w_{ij} x_j  \\
& = &  \alpha x_i + \beta \left( v(i) x_i - \sum_{j=1}^n w_{ij} x_j \right) =  \alpha x_i + \beta (L_Gx)_i\, .
 \end{array}
 $$
That is,  $f = \alpha \mbox{id}_n + \beta L_G$. Moreover, any linear map on $\R^n$ of the type $\alpha \mbox{id}_n + \beta L_G$  belongs to $I_{G,odd}$ and $I_{G,l}$. \\
Let $f \in  I_{G,eo}$ be linear, that is, both $g,h$ are  linear where $g$ is odd and $h(x,y)$ is even in $x$ and odd in $y$. Thus $g(x) = \alpha x$ and trivially, from Proposition~\ref{prop:gen_form_h} (iv), it follows that $h(x,y) = \beta y$. It follows that 
$$
f_i (x_1, \ldots, x_n)  =  \alpha x_i + \beta \sum_{j=1}^n w_{ij} x_j =  \alpha x_i + \beta (W_Gx)_i, 
 $$
that is,  $f = \alpha \mbox{id}_n + \beta W_G$.  \\
(ii) When $G$ is regular, we have $L_G = v_W \mbox{id}_n - W_G$ and so  $<\mathrm{id}_n,\, W_G>  = < \mathrm{id}_n,\, L_G>$. 
\end{proof}

\section{Balanced partitions and synchrony 
in the class of the coupled cell systems with input additive structure}
\label{sec_bal}

Following ~\cite{GSP03,GST05}, a polydiagonal  $\Delta$ is a {\it synchrony subspace} of a weighted network $G$ when it is left invariant under the flow of every $G$-admissible coupled cell system with additive input structure.  

Recall that, given a weighted network $G$ and a balanced standard partition $\mathcal{P}$ on its set of cells $C$, we denote by $\Delta_{\mathcal{P}}$, the associated polydiagonal subspace, and by $G_{\mathcal{P}}$, the quotient network of $G$ by $\mathcal{P}$.

\begin{thm}[\cite{ADF17}] \label{thm:SSBR}
Let  $G$ be an $n$-cell weighted network. Consider the admissible coupled cell systems for $G$ with additive input structure, for any given choice of total phase space $\left(\R^k\right)^n$. Then:\\
(i) A polydiagonal subspace $\Delta_{\mathcal{P}}$ associated with a standard partition $\mathcal{P}$ is a synchrony subspace for $G$ if and only if the partition $\mathcal{P}$ is balanced on the set of cells of $G$. \\
(ii) Let $\mathcal{P}$ be a standard balanced partition on the set of cells of $G$. Then:\\
(ii.a) The restriction to $\Delta_{\mathcal{P}}$ of a $G$-admissible coupled cell system with additive input structure  is a $G_{\mathcal{P}}$-admissible coupled cell system with additive input structure.\\
(ii.b) Every $G_{\mathcal{P}}$-admissible  coupled cell system with additive input structure is the restriction to $\Delta_{\mathcal{P}}$ of a $G$-admissible coupled cell system with additive input structure. 
\end{thm}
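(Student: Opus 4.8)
The plan is to reduce everything to one elementary bookkeeping identity. For a point $x$ lying in the polydiagonal $\Delta_{\mathcal{P}}$, write $x_P$ for the common value of the coordinates $x_j$ with $j\in P$; then the coupling sum in (\ref{eq:EDOsystem}) collapses part by part,
$$
\sum_{i=1}^n w_{ji}\,h(x_j,x_i)=\sum_{P\in\mathcal{P}}\Big(\sum_{i\in P}w_{ji}\Big)h(x_j,x_P)=\sum_{P\in\mathcal{P}}v_P(j)\,h(x_j,x_P),
$$
using Definition~\ref{def:inputvalency}. Since $\Delta_{\mathcal{P}}$ is a linear subspace and the right-hand side of (\ref{eq:EDOsystem}) is smooth, flow-invariance of $\Delta_{\mathcal{P}}$ under $\dot x=f(x)$ is equivalent to $f(\Delta_{\mathcal{P}})\subseteq\Delta_{\mathcal{P}}$, that is, to $f_i(x)=f_{i'}(x)$ for every $x\in\Delta_{\mathcal{P}}$ whenever $[i]=[i']$. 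Combining this with the identity above, on $\Delta_{\mathcal{P}}$ we have $f_i(x)=g(x_i)+\sum_{P}v_P(i)\,h(x_i,x_P)$, and the invariance condition becomes $\sum_{P}\big(v_P(i)-v_P(i')\big)\,h(x_i,x_P)=0$ for all admissible $g,h$ and all $x\in\Delta_{\mathcal{P}}$ (the $g$-terms cancel because $x_i=x_{i'}$).

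For part (i), the $(\Leftarrow)$ direction is now immediate: if $\mathcal{P}$ is balanced then $v_P(i)=v_P(i')$ for every $P$ whenever $[i]=[i']$ (Definition~\ref{def:balanced}(ii)), so the displayed obstruction vanishes identically and $\Delta_{\mathcal{P}}$ is flow-invariant under every $G$-admissible additive-input system, hence a synchrony subspace. For the $(\Rightarrow)$ direction I would exhibit a single admissible system that forces balancedness: take $g\equiv 0$ and the linear coupling $h(a,b)=b$, which is a legitimate choice for any cell phase space $\R^k$. Then $f_i(x)=f_{i'}(x)$ on $\Delta_{\mathcal{P}}$ reads $\sum_{P}\big(v_P(i)-v_P(i')\big)\,x_P=0$ for all independent choices of the vectors $x_P\in\R^k$; fixing one part $P_0$, letting $x_{P_0}$ vary and setting all other $x_P=0$, forces $v_{P_0}(i)=v_{P_0}(i')$, and since $P_0$ and $i,i'$ were arbitrary (with $[i]=[i']$), $\mathcal{P}$ is balanced.

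For part (ii), fix a balanced standard partition $\mathcal{P}=\{P_1,\dots,P_p\}$ and identify $\Delta_{\mathcal{P}}$ with $(\R^k)^p$ through coordinates $(y_1,\dots,y_p)$, where $y_a$ is the common value on $P_a$. For (ii.a), restrict a $G$-admissible additive-input system $f$ with data $(g,h)$; by the collapsing identity, for $j\in P_a$ one gets $\dot y_a=g(y_a)+\sum_{b=1}^p v_{P_b}(j)\,h(y_a,y_b)$, and since $\mathcal{P}$ is balanced the coefficient $v_{P_b}(j)$ depends only on the part $P_a$ of $j$ and equals the entry $q_{ab}$ of the adjacency matrix of $G_{\mathcal{P}}$ by Proposition~\ref{prop:quotient_bal}. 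Hence the restricted system is exactly the $G_{\mathcal{P}}$-admissible additive-input system with the same internal function $g$ and coupling $h$. For (ii.b), conversely, given any $G_{\mathcal{P}}$-admissible additive-input system with data $(g,h)$, form the $G$-admissible system (\ref{eq:EDOsystem}) using the same $g,h$ together with the weights $w_{ji}$ of $G$; by the computation just performed, its restriction to $\Delta_{\mathcal{P}}$ is precisely the prescribed quotient system.

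The only real point requiring care is the identification used in (ii): one must verify that, under $\Delta_{\mathcal{P}}\cong(\R^k)^p$, the restricted vector field is literally of the standard additive-input form for $G_{\mathcal{P}}$, which amounts to checking that the block-row sums $v_{P_b}(j)$ reproduce the adjacency matrix of the quotient — this is where balancedness is invoked a second time and where one appeals to the construction of $G_{\mathcal{P}}$ and to Proposition~\ref{prop:quotient_bal}. Everything else is routine linear bookkeeping; no analytic estimate or topological argument is needed.
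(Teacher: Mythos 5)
Your proof is correct; note that the paper states Theorem~\ref{thm:SSBR} without giving a proof (it is imported from \cite{ADF17}), and your argument is exactly the standard one underlying that result: collapse the coupling sum over the parts using $v_P(j)$, test with the admissible choice $g\equiv 0$, $h(a,b)=b$ (equivalently, invariance under $W_G$) to force balancedness in the ``only if'' direction, and identify the restricted equations with the quotient system (\ref{eq:restEDO}) through Proposition~\ref{prop:quotient_bal}. Nothing essential is missing, so I have no corrections to offer.
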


Let $G$ be a weighted network and $W_G$ the corresponding weighted adjacency matrix. Let $\mathcal{P}$ be a balanced
 standard partition of the set of cells of $G$ with parts $P_1, \ldots, P_p$ and consider the corresponding block structure (\ref{eq:oddbf}) of $W_G$. 
  Denote coordinates on $\Delta_{\mathcal{P}}$ by $(y_1, \ldots, y_p)$ where $y_j = x_k$ for (all) $ k \in P_j$, where $j=1, \ldots, p$. The restriction of (\ref{eq:EDOsystem}) to the polydiagonal space $\Delta_{\mathcal{P}}$ is admissible for the quotient $G_{\mathcal{P}}$ with adjacency matrix $[q_{ij}]$ given by:
\begin{equation} 
\dot{y}_j=g(y_j) +\sum_{i=1}^p {q_{ji}h\left(y_j,y_i\right)} \quad \left(j=1, \ldots, p\right)\, .
\label{eq:restEDO}
\end{equation}

\begin{rem}\normalfont 
In $I_{G,0}$, we have that in (\ref{eq:EDOsystem}) and (\ref{eq:restEDO}), the terms $a_{jj} h(x_j,x_j)$ and $q_{jj} h(y_j,y_j)$ vanish, respectively. Thus 
$$
I_{G,0} \subsetneqq I_G\, . 
$$
It follows then that in $I_{G,0}$, for a polydiagonal subspace $\Delta_{\mathcal{P}}$ to be a synchrony subspace for $G$, that is, to be left invariant under the flow of any system of the form  (\ref{eq:EDOsystem}) where $f \in I_{G,0}$, we expect less restrictions to be imposed on $\mathcal{P}$. In fact, to be precise, we can relax the condition of a partition to be balanced by dropping down the conditions on the blocks $Q_{jj}$  that have constant row sum. That is, the standard partition $\mathcal{P}$ must be exo-balanced as we show  in the next section. 
\hfill $\Diamond$
\end{rem}

\section{Exo-balanced partitions and synchrony in 
the class of the  exo-input-additive coupled cell systems}
\label{sec_exo_bal}

In this section we enlarge the set of synchrony subspaces of a network by restricting to coupled cell systems that are exo-input-additive. 
Let $G$ be an $n$-cell weighted network and $W_G$ the corresponding weighted adjacency matrix. When $f \in I_{G,0}$, equations (\ref{eq:EDOsystem}) for the 
input additive coupled cell systems admissible by $G$ simplify to: 
 \begin{equation} 
\dot{x}_j=g(x_j) +\sum_{i=1,
 i\not= j}^n w_{ji} h\left(x_j,x_i\right) \quad \left(j=1,\ldots,n\right)\, .
\label{eq:2EDO}
\end{equation}

The following result is an extension, to weighted coupled cell networks and input additive coupled cell systems, of Theorem 3.13 in Neuberger {\it et al.}~\cite{NSS19}.

\begin{prop} \normalfont 
Let $G$ be an $n$-cell weighted network and $\mathcal{P}$ a standard partition of its set of cells. The partition $\mathcal{P}$  is exo-balanced for $G$ if and only if  
$\Delta_{\mathcal{P}}$ is left invariant under the flow of every system in $I_{G,0}$, for any given choice of total phase space $\left(\R^k\right)^n$. 
\end{prop}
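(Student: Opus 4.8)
The plan is to prove the two implications separately, reducing the dynamical statement to the linear-algebraic characterization already established in Corollary~\ref{thm:mainpart_exo}(ii) and Definition~\ref{def:balanced}(i).

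First I would observe that by Corollary~\ref{thm:mainpart_exo}(ii), $\mathcal{P}$ is exo-balanced if and only if $\Delta_{\mathcal{P}}$ is invariant under the Laplacian matrix $L_G$, equivalently (by Remark~\ref{rmk_block_match}) every block $Q_{ij}$ with $i\ne j$ has constant row sum; we therefore have to show this is equivalent to flow-invariance of $\Delta_{\mathcal{P}}$ under all $f\in I_{G,0}$.

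For the ``if'' direction I would use the linear vector fields in $I_{G,0}$. By Proposition~\ref{prop:reg_nreg_L_W}(i), the linear maps inside $I_{G,0}$ form the subspace $\langle \mathrm{id}_n, L_G\rangle$ (for $k=1$; for general $k$ one takes these acting diagonally on $(\R^k)^n$, which changes nothing). If $\Delta_{\mathcal{P}}$ is left invariant under the flow of every system in $I_{G,0}$, then in particular it is invariant under the flow of $\dot{x}=L_G x$, hence invariant as a subspace under $L_G$, hence by Corollary~\ref{thm:mainpart_exo}(ii) the partition $\mathcal{P}$ is exo-balanced.

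For the ``only if'' direction, assume $\mathcal{P}$ is exo-balanced, so every off-diagonal block $Q_{ij}$ is regular, say with row sum $q_{ij}=v_{Q_{ij}}$. Let $f\in I_{G,0}$, so $f$ has the form \eqref{eq:2EDO}. I would take a point $x\in\Delta_{\mathcal{P}}$, write $x_k=y_j$ for all $k\in P_j$, and compute $\dot x_k=g(y_j)+\sum_{i=1,i\ne j}^{p}\sum_{l\in P_i} w_{kl}h(y_j,y_i)$, since the diagonal-block terms $w_{kl}h(y_j,y_j)$ with $l\in P_j$ vanish because $h(y,y)=0$ for $f\in I_{G,0}$. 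Now $\sum_{l\in P_i}w_{kl}=(\mathrm{rs}(Q_{ij}))_k=q_{ij}$ is independent of the choice of $k\in P_j$, so $\dot x_k=g(y_j)+\sum_{i\ne j}q_{ij}h(y_j,y_i)$ depends only on which part $k$ lies in; hence the vector field is tangent to $\Delta_{\mathcal{P}}$, which is therefore flow-invariant. (This restriction is precisely the quotient system on $Q_{\mathcal{P}}$ of Definition~\ref{def:quo:_exo}.)

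The only mild subtlety — and the place that needs care rather than real difficulty — is the ``if'' direction: one must be sure that flow-invariance under \emph{all} $f\in I_{G,0}$ forces invariance under the single linear system $\dot x=L_G x$, which is immediate once one knows $L_G\in I_{G,0}$ (Proposition~\ref{prop:reg_nreg_L_W}(i)); and one must note that flow-invariance of a linear subspace under a linear flow $e^{tL_G}$ is equivalent to $L_G$-invariance of that subspace, by differentiating at $t=0$ (and conversely). I expect no genuine obstacle here; the argument is a direct transcription of the block-structure computation in the proof of Proposition~\ref{thm:mainLaplacian} combined with the vanishing of the diagonal coupling terms.
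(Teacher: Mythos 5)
Your argument is correct, but it follows a genuinely different route from the paper's. The paper proves this proposition by transferring the problem to the auxiliary network $G_{-L}$ whose adjacency matrix is $-L_G$: by Remark~\ref{rmk:bal_G_L}, $\mathcal{P}$ is exo-balanced for $G$ iff it is balanced for $G_{-L}$, Theorem~\ref{thm:SSBR} converts this into flow-invariance of $\Delta_{\mathcal{P}}$ for the admissible systems of $G_{-L}$, and the key observation is that $I_{G_{-L},0}=I_{G,0}$ because the off-diagonal weights of $-L_G$ agree with those of $W_G$ and the self-coupling terms vanish when $h(u,u)=0$. You instead work directly: for the ``if'' direction you pick the single linear system $\dot{x}=L_Gx$, which lies in $I_{G,0}$ by Proposition~\ref{prop:reg_nreg_L_W}(i), note that flow-invariance of a subspace under a linear flow is the same as invariance under the matrix, and invoke Corollary~\ref{thm:mainpart_exo}(ii); for the ``only if'' direction you redo, in essence, the block computation of Proposition~\ref{thm:mainLaplacian} restricted to the exo-additive form \eqref{eq:2EDO}, using that the same-part terms vanish ($h(y,y)=0$) and the off-diagonal blocks have constant row sums. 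Both routes are legitimate and non-circular (the results you cite are established before Section~\ref{sec_exo_bal} and do not depend on this proposition). What the paper's reduction buys is brevity and reuse of the already-proved synchrony theorem for balanced partitions; what your direct proof buys is self-containedness, and in particular an explicit, unambiguous argument for the direction ``flow-invariance under $I_{G,0}$ implies exo-balanced'', which in the paper is absorbed into the chain of equivalences. One cosmetic point: with the paper's convention that $Q_{ij}$ encodes the connections \emph{to} $P_i$ \emph{from} $P_j$, the row sum you need for $k\in P_j$ and inputs from $P_i$ is $\bigl(\mathrm{rs}(Q_{ji})\bigr)_k$, not $\bigl(\mathrm{rs}(Q_{ij})\bigr)_k$; since exo-balance makes all off-diagonal blocks regular, this index transposition does not affect the validity of your argument.
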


\begin{proof} 
It follows from Remark~\ref{rmk:bal_G_L}, that a partition $\mathcal{P}$ is exo-balanced for $G$ if and only it it is balanced for the network $G_{-L}$  with adjacency matrix $-L_G$, where $L_G=[l_{ij}]$ is the Laplacian matrix of $G$. 
By Theorem~\ref{thm:SSBR}, this is equivalent to the polydiagonal subspace $\Delta_{\mathcal{P}}$ being a synchrony subspace for $G_{-L}$ which is equivalent to the polydiagonal subspace $\Delta_{\mathcal{P}}$ being left invariant under the flow of every system in $I_{G_{-L}}$. For the systems in $I_{G_{-L}}$, the equation $\dot x_j = f_j(x)$ associated with cell $j$ has the form:
\begin{equation} 
\dot{x}_j=g(x_j)  + \sum_{i=1}^n \left(-l_{ji}\right) h\left(x_j,x_i\right) \quad \left(j =1, \ldots, n\right)\, .
\label{eq:sys_Lag}
\end{equation}
Given that $w_{ji} = -l_{ji}$, for $i \ne j$, when $h(u,u)=0$, we have that the equations in (\ref{eq:2EDO}) and (\ref{eq:sys_Lag}) are the same. That is, $I_{G_{-L},0}$ and $I_{G,0}$ coincide. The result then follows.
\end{proof}

 Let $\mathcal{P}$ be a strict exo-balanced
 standard partition on the set of cells of $G$ with parts $P_1, \ldots, P_p$ and consider an enumeration of the network set of cells adapted to $\mathcal{P}$ so that $W_G$ has a 
 block structure (\ref{eq:oddbf}).  Recall Definition~\ref{def:quo:_exo} where it is described what we call the quotient network $G_{\footnotesize{\mathcal{P}}}$ which has the $p \times p$ adjacency matrix $W_{G_{\footnotesize{\mathcal{P}}}} = [q_{ij}]$ with $q_{ii} = 0$ and $q_{ij} = v_{Q_{ij}}$, for $i \ne j$. 

Equations (\ref{eq:2EDO}), when restricted to $\Delta_{\mathcal{P}}$ are given by: 
\begin{equation} 
\dot{y}_j=g(y_j) +\sum_{i=1, i\not=j}^p {q_{ji}h\left(y_j,y_i\right)} \quad \left( j=1, \ldots, p\right) 
\label{eq:2restEDO}
\end{equation}
which  are admissible by the network with adjacency matrix 
$$\left( 
 \begin{array}{c|c|c|c}
0 & q_{12} &  \cdots &q_{1p}\\
\hline 
 \vdots& \vdots & \cdots &\vdots\\
 \hline 
q_{p1} & q_{p2} & \cdots &0
 \end{array}
 \right)\, .$$
 In particular, these restricted equations are also the restriction to $\Delta_{\mathcal{P}}$ of equations (\ref{eq:2EDO}),  for the network 
 with adjacency matrix 
 \begin{equation}
\left( 
 \begin{array}{c|c|c|c}
0_{11} & Q_{12} &  \cdots &Q_{1p}\\
\hline 
 \vdots& \vdots & \cdots &\vdots\\
 \hline 
Q_{p1} & Q_{p2} & \cdots &0_{pp}
 \end{array}
 \right)\, . 
 \label{eq:2zerodiagbf}
 \end{equation}

		\begin{exam} \normalfont 
Recall the three-cell network $G$ at the left of Figure~\ref{fig:odd3cell}. The standard partition $\mathcal{P} = \left\{P_1 = \{1\},\, P_2 = \{2,3\} \right\}$ is balanced and exo-balanced. A coupled cell system of the form  (\ref{eq:EDOsystem}) for $G$ where $f \in I_{G}$ or $f \in I_{G,0}$ takes the form
$$
\left\{ 
\begin{array}{l}
\dot{x}_1 = g(x_1) + h(x_1,x_2) + h(x_1, x_3)\\
\dot{x}_2 = g(x_2) + 2h(x_2,x_1)\\
\dot{x}_3 = g(x_3) + 2h(x_3,x_1)
\end{array}
\right. \, .
$$
Restricting any such system to  $\Delta_{\mathcal{P}} = \{x:\, x_2 = x_3\}$, we obtain
$$
\left\{ 
\begin{array}{l}
\dot{x}_1 = g(x_1) + 2h(x_1,x_2)\\
\dot{x}_2 = g(x_2) + 2h(x_2,x_1)
\end{array}
\right. \, .
$$
This system is  admissible for the quotient network $Q_2$ at the right of Figure~\ref{fig:odd3cell}. In fact, if $f \in I_{G,0}$, then this restricted system is in $I_{Q_{2},0} \subset I_{Q_{2}}$. The network $Q_2$ is a two-cell bidirectional ring network where the edges have weight two. 
\hfill $\Diamond$
\end{exam}

\begin{exam} \normalfont  Consider the four-cell network $G$ with weighted adjacency matrix 
$$
W_{G} =
	\left( 
 \begin{array}{cccc}
0 & -3 & -1& -2  \\
-1 & 0 & -1& -1  \\
-3 & 0 & 0 & -1  \\
-1 & -1 & -1& 0  
 \end{array}
 \right)
$$
and note that $\mathcal{P} = \left\{P_1 = \{1,2,4\},\, P_2 = \{3\} \right\}$ is a strict exo-balanced standard partition  for $G$. An exo-input-additive coupled cell system for $G$, that is, in $I_{G,0}$, takes the form
$$
\left\{ 
\begin{array}{l}
\dot{x}_1 = g(x_1) -3 h(x_1,x_2) - h(x_1, x_3) - 2h(x_1, x_4)\\
\dot{x}_2 = g(x_2) - h(x_2,x_1) - h(x_2, x_3) - h(x_2, x_4)\\
\dot{x}_3 = g(x_3) - 3h(x_3,x_1) - h(x_3, x_4)\\
\dot{x}_4 = g(x_4) - h(x_4,x_1) - h(x_4, x_2) - h(x_4, x_3)
\end{array}
\right. \, .
$$
Restricting any such system to  $\Delta_{\mathcal{P}} = \{x:\, x_1=x_2 = x_4\}$, given that $h(u,u)=0$, we get the system
$$
\left\{ 
\begin{array}{l}
\dot{x}_1 = g(x_1) -h(x_1,x_3)\\
\dot{x}_3 = g(x_3) -4h(x_3,x_1)
\end{array}
\right. \, .
$$
This system is admissible for the quotient network of $G$ by the exo-balanced partition $\mathcal{P}$ with adjacency matrix 
$
\left( 
\begin{array}{cc}
0 & -1 \\
-4 & 0
\end{array}
\right)
$ (recall Definition~\ref{def:quo:_exo}). 
\hfill $\Diamond$
\end{exam}

\section{Odd-balanced partitions and anti-synchrony  in 
the class of the odd-input-additive coupled cell systems} \label{sec_odd_bal}

A  non-standard generalized polydiagonal  left invariant under the flow of every odd-input-additive coupled cell system admissible by a weighted network $G$ is an {\it anti-synchrony subspace} of $G$. We show next that these anti-synchrony subspaces of $G$ are the non-standard generalized polydiagonals associated with the odd-balanced tagged partitions of $G$. This result is an extension, to weighted coupled cell networks and input additive coupled cell systems, of Theorem 4.14 in Neuberger {\it et al.}~\cite{NSS19}.

\begin{prop} \normalfont \label{prop:odd_iff}
Let $G$ be a weighted network and $\mathcal{P}$ a tagged partition of its set of cells 
which is not standard.
The tagged partition $\mathcal{P}$  is odd-balanced for $G$ if and only if  the generalized polydiagonal $\Delta_{\mathcal{P}}$ is left invariant under the flow of every system in $I_{G,odd}$, for any given choice of total phase space  $\left(\R^k\right)^n$. 
\end{prop}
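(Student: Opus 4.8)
The plan is to reduce the statement to an application of Proposition~\ref{thm:mainLaplacian}, using Proposition~\ref{prop:gen_form_h}(ii) to make the coupling function explicit. Recall that $f \in I_{G,odd}$ means $\dot x_j = g(x_j) + \sum_{i} w_{ji} h(x_j,x_i)$ with both $g$ and $h$ odd and $h(x,x)=0$, and by Proposition~\ref{prop:gen_form_h}(ii) we may write, in the scalar ($k=1$) case, $h(a,b) = (a-b) m(a^2,b^2)$ for a smooth $m$; the vector case is handled componentwise in the same way. Fix an enumeration of $C$ adapted to $\mathcal{P}$ so that $W_G$ has block form (\ref{eq:oddbf}), and write a point of $\Delta_{\mathcal{P}}$ as $x = (X_1,\ldots,X_p,-X_1,\ldots,-X_q,\mathbf{0})$ exactly as in the proof of Proposition~\ref{thm:mainLaplacian}.

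First I would prove the ``only if'' direction: assume $\mathcal{P}$ is odd-balanced and show $\Delta_{\mathcal{P}}$ is flow-invariant. Take $x \in \Delta_{\mathcal{P}}$ and compute $\dot x$ componentwise. For a cell $j$ in a part $P_s$ (with $s \le q$) and the corresponding cell $j'$ in $\overline P_s$, one must check $\dot x_{j'} = -\dot x_j$, and for $j$ in the zero part one must check $\dot x_j = 0$. Because $g$ is odd, the internal terms already satisfy $g(-x_j) = -g(x_j)$ and $g(0)=0$, so it remains to handle the coupling sums. Here the key point is that $h$ odd with $h(x,x)=0$ gives the two identities $h(-u,-v) = -h(u,v)$ and — crucial for the zero part — $h(u,-u) = 2h(u,0)$ and $h(0,v) = -h(v,0)$ (these are noted in the Remark after the definition of $I_{G,l}$, and hold for any odd $h$ with $h(x,x)=0$). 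Using the block regularity conditions (a)–(d) of Definition~\ref{def:odd} — that all off-``diagonal-type'' blocks are regular, that matched blocks $Q_{ij}/\overline Q_{ij}$, $R_{ij}/\overline R_{ij}$, $Z_{i0}/\overline Z_{i0}$ share a valency, and that $\mathrm{rs}(Z_{0j})=\mathrm{rs}(\overline Z_{0j})$ resp. $\mathrm{rs}(Z_{0j})=0$ — each coupling sum collapses to an expression depending only on the $X_i$'s, and the sign bookkeeping (which relies on the shared valencies to pair up a $Q$-block contribution with an $\overline R$-block contribution, etc.) yields the required antisymmetry/vanishing. The diagonal blocks $Q_{ii},\overline Q_{ii}$ contribute $w\,h(X_i,X_i)=0$ and $Z_{00}$ contributes $h(0,0)=0$, which is why those blocks carry no restriction.

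For the converse, I would argue that if $\Delta_{\mathcal{P}}$ is flow-invariant for every $f \in I_{G,odd}$ then in particular it is invariant for the linear odd systems, i.e. for $g=\mathrm{id}$, $h(a,b)=\beta(a-b)$; by Proposition~\ref{prop:reg_nreg_L_W}(i) these are exactly the vector fields in $\langle \mathrm{id}_n, L_G\rangle$, so $\Delta_{\mathcal{P}}$ is invariant under $L_G$, hence $\mathcal{P}$ is linear-balanced. To upgrade linear-balanced to odd-balanced one must extract the extra block conditions (the individual regularity of each off-diagonal block and the matching of valencies between a block and its counterpart block, rather than merely of row-sum \emph{differences}) from invariance under suitably chosen \emph{nonlinear} odd systems — e.g. by taking $h(a,b) = (a-b)\,\varphi(a^2+b^2)$ for cleverly chosen bump functions $\varphi$, one can isolate the contribution of cells in a single part and force each $\mathrm{rs}$-condition in Definition~\ref{def:odd} separately. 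This step — showing that the nonlinear odd systems see strictly more than $L_G$ does, and precisely recover conditions (a)–(d) — is the main obstacle, and is the analogue of the argument behind Theorem~4.14 of Neuberger \textit{et al.}~\cite{NSS19}; the weighted, directed, multi-dimensional-cell generality means the bump-function localization has to be set up carefully, separating the coordinates $a_i^2$ and $b_i^2$ as in Proposition~\ref{prop:gen_form_h}(ii). Once both directions are in place, the equivalence with $\Delta_{\mathcal{P}}$ being an anti-synchrony subspace is just the definition.
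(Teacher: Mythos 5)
Your ``if'' direction follows the paper's route (plug a point of $\Delta_{\mathcal{P}}$ into the equations, use oddness of $g,h$ and $h(u,u)=0$ together with conditions (a)--(d) of Definition~\ref{def:odd}), but two details are off. The identities you call key, $h(u,-u)=2h(u,0)$ and $h(0,v)=-h(v,0)$, are \emph{not} valid for a general odd $h$ with $h(x,x)=0$ (take $h(x,y)=(x-y)^3$, resp.\ $h(x,y)=x^2y-y^3$); they are special to linear $h$ and belong to the $I_{G,l}$ setting. They are also not needed: for the zero part one only uses $g(0)=0$, $h(0,0)=0$ and $h(0,-v)=-h(0,v)$, and for a part/counterpart pair the bookkeeping matches $Q_{ij}$ with $\overline{Q}_{ij}$, $R_{ij}$ with $\overline{R}_{ij}$ and $Z_{i0}$ with $\overline{Z}_{i0}$ (exactly the pairs in Definition~\ref{def:odd}(b)), not a $Q$-block with an $\overline{R}$-block as you suggest. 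These slips indicate the verification was not actually carried through, though the plan is sound and coincides with the paper's.

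The genuine gap is in the converse. From invariance under the linear odd systems you only get invariance under $L_G$, i.e.\ that $\mathcal{P}$ is linear-balanced, and this is strictly weaker than odd-balanced: Examples~\ref{exs:linear} exhibit linear-balanced partitions that are not odd-balanced (e.g.\ $R_{11}$ not regular). So the whole content of the ``only if'' direction is precisely the step you defer --- extracting the individual regularity of each off-diagonal block and the matching of valencies in (a)--(d) from invariance under genuinely nonlinear odd couplings --- and you only gesture at it with bump functions $h(a,b)=(a-b)\varphi(a^2+b^2)$, explicitly calling it ``the main obstacle''. The paper instead obtains these conditions directly: restricting the admissible equations to $\Delta_{\mathcal{P}}$ and imposing $\dot x_j=\dot x_k$ for $j,k$ in the same part, $\dot x_j=-\dot x_{j'}$ for a cell and a counterpart cell, and $\dot x_j=0$ for zero-part cells, it reads off each row-sum condition separately (the separation uses the freedom in the coupling function; a single linear $h$ only sees the $L_G$-combinations, as you correctly observe). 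Your bump-function/evaluation-at-points idea can be made to work (evaluate at states where all but one pair of coordinates vanish and vary $\varphi$ to isolate each block), so the route is salvageable, but as written the proposal does not prove the implication ``$\Delta_{\mathcal{P}}$ invariant for all of $I_{G,odd}$ $\Rightarrow$ $\mathcal{P}$ odd-balanced''; it proves only ``$\Rightarrow$ linear-balanced''.
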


\begin{proof} 
Let $G$ be an $n$-cell weighted network with set of cells $C$ and adjacency matrix $W_G$.  Consider a  tagged partition $\mathcal{P}$ of $C$  
formed by parts $P_1, P_2, \ldots, P_p$,  counterparts  $\overline{P}_1, \overline{P}_2, \ldots, \overline{P}_q$ and zero part $P_0$.
Consider an enumeration of the cells adapted to $\mathcal{P}$  so that $W_G$ has a  block form (\ref{eq:oddbf}).

Equation (\ref{eq:EDOsystem}) for the input additive coupled cells systems admissible by $G$ can be rewritten as 
{\tiny 
\begin{equation} \label{eq:EDOsystem_odd}
\dot{x}_j=g(x_j) + \sum_{t=1}^q \left( \sum_{i \in P_t} {w_{ji}h\left(x_j,x_i\right)} +  \sum_{i \in \overline{P}_t} {w_{ji}h\left(x_j,x_i\right)} \right) +  
\sum_{t=q+1}^p \left( \sum_{i \in P_t} w_{ji}h \left(x_j,x_i\right) \right) 
+ \sum_{i \in P_0} {w_{ji}h\left(x_j,x_i\right)},  
\end{equation}}
for $j=1, \ldots, n$.

For any given choice of total phase space $\left(\R^k\right)^n$, assume that the generalized polydiagonal subspace $\Delta_{\mathcal{P}}$ is left invariant under the flow of every system in $I_{G,odd}$. Take $k=1$ and $h(x,y) = x-y$ and note that, by Proposition~\ref{prop:gen_form_h} (ii), in $I_{G,odd}$ we have $h(u,u)=0$. Then, in the restriction to $\Delta_{\mathcal{P}}$, we have the following: \\
\noindent (i) For $j,k \in P_r$, for $r \in \{1, \ldots, p\}$, we have $\dot{x}_j = \dot{x}_k$. Thus, since $h(u,u)=0$, we have $ \sum_{i \in P_t} w_{ji} = \sum_{i \in P_t} w_{ki}$ and $\sum_{i \in \overline{P}_s} w_{ji} = \sum_{i \in \overline{P}_s} w_{ki}$, for $t \ne r$, 
$1\leq t \leq p$, $1\leq s \leq q$, and $ \sum_{i \in P_0} w_{ji} = \sum_{i \in P_0} w_{ki}$. That is,  
the block matrices, $Q_{ij}$ if $i \not= j$, $R_{ij}$ and $Z_{i0}$, in (\ref{eq:oddbf}), are regular.\\
\noindent (ii) Analogously, taking  $j,k \in \overline{P}_r$, for  $r \in \{1, \ldots, q\}$, we conclude that the block matrices, 
$\overline{Q}_{ij}$ if $i \not= j$, $\overline{R}_{ij}$ and $\overline{Z}_{i0}$, in (\ref{eq:oddbf}), are regular.\\
\noindent (iii) For $j \in P_r$ and $k \in \overline{P}_r$, for $r \in \{1, \ldots, q\}$, we have $\dot{x}_j = - \dot{x}_k$. Thus, since $h$ is odd, we have $\sum_{i \in P_t} w_{ji} = \sum_{i \in \overline{P}_t} w_{ki}$ and $\sum_{i \in \overline{P}_t} w_{ji} = \sum_{i \in P_t} w_{ki}$, for $t \ne r$.  That is, for $i,j \in \{1, \ldots, q \}$, we have ${\mathrm rs}\left( Q_{ij} \right) = {\mathrm rs}\left( \overline{Q}_{ij} \right)$ if  $i \ne j$, and ${\mathrm rs}\left( R_{ij} \right) = {\mathrm rs}\left( \overline{R}_{ij} \right)$.\\
\noindent (iv) For $j \in P_0$, we have $\dot{x}_j = 0$. Thus, since $h$ is odd, we have $\sum_{i \in P_t} w_{ji} = \sum_{i \in \overline{P}_t} w_{ji}$, 
that is, ${\mathrm rs}\left(Z_{0t} \right) = {\mathrm rs}\left( \overline{Z}_{0t} \right)$ for $1\leq t \leq q$, and 
$\sum_{i \in P_t} w_{ji} = 0$, that is, ${\mathrm rs}\left(Z_{0t} \right) =0$  for $q+1 \leq t \leq p$.

We conclude that, if the generalized polydiagonal subspace $\Delta_{\mathcal{P}}$ is left invariant under the flow of every system in $I_{G,odd}$ then the tagged partition $\mathcal{P}$ is odd-balanced for $G$.

Now, assume that the tagged partition $\mathcal{P}$ is odd-balanced for $G$ and consider the input additive coupled cells systems admissible by $G$ in $I_{G,0}$. We can assume that the equations are in the form given in (\ref{eq:EDOsystem_odd}).  We have the following: \\
\noindent Conditions (a)-(b) in Definition~\ref{def:odd} of odd-balanced partition imposing the regularity of all the blocks except $Q_{ii}, \overline{Q}_{ii}, Z_{0j}, \overline{0j}, Z_{00}$ and that, each pair of blocks of the type, $Q_{ij},\, \overline{Q}_{ij}$ if $i \ne j$, $R_{ij},\, \overline{R}_{ij}$, and $Z_{i0},\, \overline{Z}_{i0}$ are both regular of the same valency, since $h(u,u)=0$, imply that, given an initial condition in $\Delta_{\mathcal{P}}$, the equations for cells  in the same part $P_r$ for $1\leq r \leq p$, or $\overline{P}_s$ for $1 \leq s \leq q$, are equal. Moreover, they imply that the equations for cells in a part $P_s$ are symmetric to the  equations for cells in its counterpart $\overline{P}_s$ for $1 \leq s \leq q$,  with the additional condition of $g$ and $h$ being odd.\\
\noindent Conditions (c)-(d)  in Definition~\ref{def:odd} of odd-balanced partition imposing that the blocks $Z_{0j},\, \overline{Z}_{0j}$ satisfy ${\mathrm rs}(Z_{0j}) = {\mathrm rs}\left(\overline{Z}_{0j}\right)$, for $0 < j \leq q$, and  ${\mathrm rs}(Z_{0j})=0$ for $q+1 \leq j \leq p$, imply that, given an initial condition in $\Delta_{\mathcal{P}}$, the equations for cells  in the part $P_0$ are null, with the additional condition of $g$ and $h$ being odd.\\
We conclude then that, if a tagged partition $\mathcal{P}$  is odd-balanced for $G$ then $\Delta_{\mathcal{P}}$ is left invariant under the flow of every system in $I_{G,odd}$.
\end{proof}

\begin{exam}\normalfont   Returning to the network on the left of Figure~\ref{fig:odd3cell},  we have that equations (\ref{eq:EDOsystem}) for $G$ where $f \in I_{G,odd}$ take the form
$$
\left\{ 
\begin{array}{l}
\dot{x}_1 = g(x_1) + h(x_1,x_2) + h(x_1, x_3)\\
\dot{x}_2 = g(x_2) + 2h(x_2,x_1)\\
\dot{x}_3 = g(x_3) + 2h(x_3,x_1)
\end{array}
\right. 
$$
where $g,h$ are odd and $h(x,x) = 0$. In Example~\ref{ex:simples}, we have seen that the tagged partition $\mathcal{P} = \left\{[1] = P_1 = \{1\},\, -[1] = \overline{P}_1 = \{2,3\} \right\}$ is odd-balanced. Restricting any such system to  the generalized polydiagonal $\Delta_{\mathcal{P}} = \{x:\, x_2 = -x_1,\, x_3  = -x_1\}$, we obtain
$$
\dot{x}_1 = g(x_1) + 2h(x_1,-x_1)\, .
$$
The symbolic network $Q_1$ at the center of Figure~\ref{fig:odd3cell}, as described in Definition~\ref{def:quo_odd}, represents this restricted system where the cell $-[1]$ represents the negative state of the cell $[1]$. 
\hfill $\Diamond$
\end{exam}

From  Proposition~\ref{prop:odd_iff} and using the symbolic quotient defined in Definition~\ref{def:quo_odd} for  an odd-balanced tagged partition, it follows the following proposition:
	
\begin{prop}
Given  an $n$-cell network $G$, an odd-balanced tagged partition $\mathcal{P}$ on the network set of cells, and an enumeration of cells adapted to $\mathcal{P}$ providing a block structure (\ref{eq:oddbf}) of the adjacency matrix $W_G$, we have that any coupled cell system in $I_{G,odd}$ restricted to the generalized polydiagonal $\Delta_{\mathcal{P}}$ is consistent with the symbolic quotient defined in Definition~\ref{def:quo_odd} where cells representing the classes $\overline{P}_i \equiv -P_i$ correspond to the negative states of the cells representing the classes $P_i$. Moreover, the cell representing the class $P_0$  corresponds to the zero state. More precisely, it has the following  the form. Denoting coordinates on $\Delta_{\mathcal{P}}$ by $(y_1, \ldots, y_p)$ where $y_j = x_k$ for (all) $ k \in P_j$, where $j=1, \ldots, p$, the restriction of (\ref{eq:EDOsystem}) to $\Delta_{\mathcal{P}}$ where $f \in I_{G,odd}$  is given by:
\begin{equation} 
\dot{y}_j=g(y_j) +\sum_{i=1, i \not= j}^p q_{ji}h\left(y_j,y_i\right) + \sum_{i=1}^{q} r_{ji}h\left(y_j,-y_i\right) +  z_{j0}h\left(y_j,0\right)  \quad \left( j=1, \ldots, p\right)\, .
\label{eq:oddrestEDO}
\end{equation}
Here, $q_{ji}$ (resp. $r_{ji}$) represents the valency of the regular matrix $Q_{ji}$ (resp. $R_{ji}$) and $z_{j0}$ the valency of the regular matrix $Z_{j0}$. 
\end{prop}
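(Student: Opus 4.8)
The plan is to compute the restricted system by substituting the defining relations of $\Delta_{\mathcal{P}}$ directly into the rewritten admissible equations~(\ref{eq:EDOsystem_odd}), and then to read off that the resulting system is admissible for the symbolic quotient of Definition~\ref{def:quo_odd}(i).

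First, fix $j \in \{1, \ldots, p\}$ and any cell $c \in P_j$; on $\Delta_{\mathcal{P}}$ one has $x_c = y_j$, together with $x_i = y_t$ for $i \in P_t$, $x_i = -y_t$ for $i \in \overline{P}_t$, and $x_i = 0$ for $i \in P_0$. Substituting these into the equation $\dot{x}_c = f_c(x)$ written in the form~(\ref{eq:EDOsystem_odd}), and using that for $f \in I_{G,odd}$ the coupling function $h$ is odd with $h(u,u)=0$ (Proposition~\ref{prop:gen_form_h}(ii)), each inner sum $\sum_{i \in P_t} w_{ci}$, $\sum_{i \in \overline{P}_t} w_{ci}$, $\sum_{i \in P_0} w_{ci}$ is the row sum of the block $Q_{jt}$, $R_{jt}$, $Z_{j0}$ of~(\ref{eq:oddbf}) along the row indexed by $c$. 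Conditions (a)--(b) of Definition~\ref{def:odd} of an odd-balanced partition assert precisely that the blocks $Q_{jt}$ (for $t\neq j$), $R_{jt}$ and $Z_{j0}$ are regular, so these row sums equal the valencies $q_{jt}$, $r_{jt}$, $z_{j0}$ and are independent of the choice of $c \in P_j$. Moreover the $t=j$ contribution $\bigl(\sum_{i \in P_j} w_{ci}\bigr)\,h(y_j,y_j)$ is zero because $h(y_j,y_j)=0$, which explains why Definition~\ref{def:odd} imposes no condition on $Q_{jj}$ (nor on $\overline{Q}_{jj}$). Collecting the surviving terms --- the parts $P_t$ with $t\neq j$ contributing $q_{jt}h(y_j,y_t)$, the counterparts $\overline{P}_t$ contributing $r_{jt}h(y_j,-y_t)$, and the zero part contributing $z_{j0}h(y_j,0)$ --- yields exactly equation~(\ref{eq:oddrestEDO}) for $\dot{y}_j$.

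Second, I would invoke Proposition~\ref{prop:odd_iff}: since $\mathcal{P}$ is odd-balanced, $\Delta_{\mathcal{P}}$ is flow-invariant under every system in $I_{G,odd}$, so the restriction is a genuine, well-defined system of ordinary differential equations on $\Delta_{\mathcal{P}}$, and the computation of the previous paragraph identifies it with~(\ref{eq:oddrestEDO}). In particular it is not necessary to re-derive the equations attached to cells in the counterparts $\overline{P}_j$ or in $P_0$: flow-invariance already guarantees that those reproduce $-\dot{y}_j$ and $0$ respectively, which is where the oddness of $g$ and $h$ together with conditions (c)--(d) of Definition~\ref{def:odd} enter. Finally, comparing~(\ref{eq:oddrestEDO}) with Definition~\ref{def:quo_odd}(i), the coefficient $q_{ji}$ of $h(y_j,y_i)$ for $i\neq j$, the coefficient $r_{ji}$ of $h(y_j,-y_i)$, and the coefficient $z_{j0}$ of $h(y_j,0)$ are exactly the weights assigned there to the edges $P_i \to P_j$, $\overline{P}_i \to P_j$ and $P_0 \to P_j$ of the symbolic quotient; hence~(\ref{eq:oddrestEDO}) is an admissible input-additive system for that symbolic quotient, with the cells representing $\overline{P}_i \equiv -P_i$ and $P_0$ playing the roles of the states $-y_i$ and $0$, as claimed. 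The step needing the most care is the bookkeeping: tracking which block of~(\ref{eq:oddbf}) produces each coefficient, noting the vanishing of the diagonal terms $h(y_j,y_j)$, and matching block valencies with the quotient edge weights of Definition~\ref{def:quo_odd}; the remainder is a routine substitution.
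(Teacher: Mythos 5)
Your proposal is correct and follows essentially the route the paper intends: the paper states this proposition as a direct consequence of Proposition~\ref{prop:odd_iff} and Definition~\ref{def:quo_odd}, and your substitution of the relations defining $\Delta_{\mathcal{P}}$ into (\ref{eq:EDOsystem_odd}), using regularity of the blocks $Q_{jt}$ ($t\neq j$), $R_{jt}$, $Z_{j0}$, the vanishing of the $h(y_j,y_j)$ terms, and flow-invariance to handle the counterpart and zero-part cells, is precisely the computation that justifies it. The bookkeeping matching block valencies to the symbolic quotient's edge weights is accurate.
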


\section{Linear-balanced partitions and anti-synchrony  in the class of the linear-input-additive coupled cell systems}\label{sec_lin_bal}

A non-standard generalized polydiagonal left invariant under the flow of every  linear-input-additive coupled cell system admissible by a weighted network $G$ is an {\it anti-synchrony subspace} of $G$. We show next that these  anti-synchrony subspaces of $G$ are the 
non-standard  generalized polydiagonals associated with the linear-balanced tagged partitions of $G$. This result is an extension,
to weighted coupled cell networks and input additive coupled cell systems, of Theorem 4.21 in Neuberger {\it et al.}~\cite{NSS19}. 

Recall that in $I_{G,l}$, from Proposition~\ref{prop:gen_form_h} (iii), we have  for $a,b \in \R^k$, 
		$$ h(a,a) = 0,\quad  h(a,-a) = 2 h(a,0),\quad h(a, \pm b) = h(a,0) \overline{+} h(b,0)\, . $$
		
\begin{prop} \normalfont \label{prop:IGL}
Let $G$ be a weighted network and $\mathcal{P}$ a tagged partition of its set of cells 
which is not standard.  
The tagged partition $\mathcal{P}$  is linear-balanced for $G$ if and only if the generalized polydiagonal $\Delta_{\mathcal{P}}$ is left invariant under the flow of every system in $I_{G,l}$, for any given choice of total phase space $\left(\R^k\right)^n$.
\end{prop}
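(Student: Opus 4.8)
The plan is to prove the two implications separately, reducing everything to linear algebra of $L_G$ on $\R^n$, exactly as in the proof of Proposition~\ref{prop:reg_nreg_L_W} together with the three displayed identities for $h$ recorded just before the statement.

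For the ``only if'' direction, suppose $\Delta_{\mathcal{P}}$ is left invariant under the flow of every system in $I_{G,l}$ for every choice of total phase space. Specialize to $k=1$ and to the vector field obtained by taking $g\equiv 0$ and $h(x,y)=x-y$: this $f$ lies in $I_{G,l}$, and by (the computation in) Proposition~\ref{prop:reg_nreg_L_W}(i) the associated linear map is $L_G$. Flow-invariance of the subspace $\Delta_{\mathcal{P}}$ under the linear flow $t\mapsto e^{tL_G}$ is, upon differentiating at $t=0$, equivalent to $L_G\Delta_{\mathcal{P}}\subseteq\Delta_{\mathcal{P}}$; so $\mathcal{P}$ is linear-balanced by Definition~\ref{def:linear}.

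For the ``if'' direction, assume $L_G$ leaves $\Delta_{\mathcal{P}}\subseteq\R^n$ invariant, fix a total phase space $(\R^k)^n$ and an arbitrary $f\in I_{G,l}$. Since $f\in I_{G,0}$ with $h$ linear, Proposition~\ref{prop:gen_form_h}(iii) (equivalently, the identity $h(a,b)=h(a,0)-h(b,0)$ obtained from the displayed relations) gives $h(a,b)=\phi(a)-\phi(b)$ for the linear map $\phi:=h(\cdot,0):\R^k\to\R^k$. Substituting into (\ref{eq:EDOsystem}) and using $\sum_i w_{ji}=v(j)$ and $(D_G)_{jj}=v(j)$ yields, cell by cell,
$$
\dot{x}_j = g(x_j) + v(j)\,\phi(x_j) - \sum_{i=1}^n w_{ji}\,\phi(x_i) = g(x_j) + \bigl((L_G\otimes I_k)\,\Phi(x)\bigr)_j,
$$
where $\Phi(x)=(\phi(x_1),\dots,\phi(x_n))$ and $L_G\otimes I_k$ is the action of $L_G$ on the cell index. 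Thus the vector field is $F=G+(L_G\otimes I_k)\circ\Phi$ with $G(x)=(g(x_1),\dots,g(x_n))$. It now suffices to check that $F$ maps the generalized polydiagonal $\Delta_{\mathcal{P}}$ (now read inside $(\R^k)^n$) into itself, since this subspace is linear and hence equal to its own tangent space at each point, so tangency forces flow-invariance. Three facts combine: (1) $G$ preserves $\Delta_{\mathcal{P}}$, because $g$ is odd and so sends equal coordinates to equal coordinates, opposite coordinates to opposite, and $0$ to $0$; (2) $\Phi$ preserves $\Delta_{\mathcal{P}}$ for the same reason, $\phi$ being linear hence odd; (3) $L_G\otimes I_k$ preserves $\Delta_{\mathcal{P}}$, because reading off each of the $k$ scalar components of a point of $\Delta_{\mathcal{P}}\subseteq(\R^k)^n$ yields a vector of $\Delta_{\mathcal{P}}\subseteq\R^n$, which $L_G$ keeps in $\Delta_{\mathcal{P}}$ by hypothesis. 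Hence $F(x)\in\Delta_{\mathcal{P}}$ whenever $x\in\Delta_{\mathcal{P}}$, which finishes the proof.

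The only point requiring a little care is the passage between the scalar model ($k=1$, where ``linear-balanced'' is \emph{defined} via $L_G$ on $\R^n$ in Definition~\ref{def:linear}) and the general total phase space $(\R^k)^n$: one must observe that $\Delta_{\mathcal{P}}$ inside $(\R^k)^n$ is precisely the set of points all of whose scalar components lie in $\Delta_{\mathcal{P}}\subseteq\R^n$, so invariance of $\Delta_{\mathcal{P}}$ under $L_G$ upgrades componentwise to invariance under $L_G\otimes I_k$, while conversely the $k=1$ slice already forces invariance under $L_G$. Everything else is the routine rewriting of the linear coupling function and elementary bookkeeping with odd maps.
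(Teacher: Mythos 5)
Your proof is correct, and the ``only if'' direction coincides with the paper's (both reduce to the linear representative of $I_{G,l}$, namely $L_G$ itself, and invoke Definition~\ref{def:linear}). The ``if'' direction, however, follows a genuinely different and more conceptual route. The paper assumes $\mathcal{P}$ linear-balanced, translates this via Corollary~\ref{cor:mainLaplacian} into the row-sum conditions on the blocks $Q_{ij}, R_{ij}, Z_{i0}$, $\overline{Q}_{ij}, \overline{R}_{ij}, \overline{Z}_{i0}$ of $W_G$, and then verifies tangency by a long cell-by-cell expansion of (\ref{eq:EDOsystem_Lag_lin}) on $\Delta_{\mathcal{P}}$, separately for $i$ in a part, a counterpart, and the zero part. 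You instead write $h(a,b)=\phi(a)-\phi(b)$ with $\phi=h(\cdot,0)$ linear, observe that every $f\in I_{G,l}$ then factors as $F=\mathcal{G}+(L_G\otimes I_k)\circ\Phi$ with $\mathcal{G},\Phi$ acting coordinatewise by odd maps, and reduce flow-invariance of the linear subspace $\Delta_{\mathcal{P}}$ to three elementary invariance facts, the only nontrivial one being the $L_G$-invariance of the scalar $\Delta_{\mathcal{P}}$, which is precisely the definition of linear-balanced; the passage from $k=1$ to general $k$ via scalar slices is also correctly handled. This bypasses Corollary~\ref{cor:mainLaplacian} entirely and makes transparent why no condition beyond $L_G$-invariance is needed. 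What the paper's computation buys in exchange is the explicit form of the restricted equations on $\Delta_{\mathcal{P}}$ (the coefficients $r_l$ and $q_{lt}$), which it then reuses to define and justify the symbolic quotient and equation (\ref{eq:linrestEDO}); your argument, being structural, does not produce that formula, though it could be extracted from your decomposition with a little extra work.
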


\begin{proof}  
Let $G$ be an $n$-cell weighted network with set of cells $C$, adjacency matrix $W_G$ and Laplacian $L_G$.  Consider a  tagged partition $\mathcal{P}$ of $C$ with parts $P_1, P_2, \ldots, P_p,$  counterparts $\overline{P}_1, \overline{P}_2, \ldots, \overline{P}_q$, zero part $P_0$ and the corresponding generalized polydiagonal $\Delta_{\mathcal{P}}$. 

Assume  $\Delta_{\mathcal{P}}$ is left invariant under the flow of every system in $I_{G,l}$. In  (\ref{eq:EDOsystem}), assume   $k=1$. By Proposition~\ref{prop:reg_nreg_L_W}, the space $\Delta_{\mathcal{P}}$ is left invariant under $L_G$. By Definition~\ref{def:linear}, we have that $\mathcal{P}$ is linear-balanced.

Assume now that the tagged partition $\mathcal{P}$ is linear-balanced for $G$ and consider an enumeration of the cells of $G$ adapted to $\mathcal{P}$ so that 
the adjacency matrix $W_G$ of $G$ has  a block structure (\ref{eq:oddbf}). By Definition~\ref{def:linear}, for $k=1$ the space 
$\Delta_{\mathcal{P}}$ is  left invariant by the matrix $L_G = D_G - W_G$, which is equivalent to  the entries of $W_G$ satisfy the conditions in Corollary~\ref{cor:mainLaplacian}. 
Consider an  additive coupled cell system in $I_{G,l}$, with equations
\begin{equation} 
{\tiny 
\begin{array}{rcl}
\dot{x}_i & = & g(x_i) + \displaystyle \sum_{t=1}^q \left( \sum_{j \in P_t} {w_{ij}h\left(x_i,x_j\right)} +  \sum_{m \in \overline{P}_t} {w_{im}h\left(x_i,x_m\right)} \right) + \displaystyle \sum_{t=q+1}^p \left( \sum_{j \in P_t} w_{ij} h\left(x_i,x_j\right)\right) + \sum_{j \in P_0} {w_{ij}h\left(x_i,x_j\right)}, 
\end{array}}
\label{eq:EDOsystem_Lag_lin}
\end{equation}
for $i=1,\ldots,n$, where $g$ is odd and $h$ is linear.
Consider coordinates $\left(y_1, \ldots, y_p\right)$ in $\Delta_{\mathcal{P}}$ where: for $1 \leq t \leq q$, we take $y_t = x_j = -x_m$ for all $j \in P_t$ and $m \in \overline{P}_t$; 
for $q+1 \leq t  \leq p$, we have  $y_t = x_j$ for all $j \in P_t$; also, $x_j = 0$ for all $j \in P_0$. We have so $h(y_t,y_t) = 0$ 
for all $1 \leq t \leq p$ and $h(y_t,-y_t) = 2 h(y_t,0)$ for $1 \leq t \leq q$; also,  if $l\not= t$, we have 
$h(y_l,\pm y_t) = h(y_l,0) \overline{+}  h(y_t,0)$ and $h(-y_l,\pm y_t) = - h(y_l,0) \overline{+} h(y_t,0)$.

In (\ref{eq:EDOsystem_Lag_lin}), if $i \in P_l$ for $1\leq l \leq q$ and $x \in \Delta_{\mathcal{P}}$, using  conditions (i)-(ii) in Corollary~\ref{cor:mainLaplacian} and corresponding notation, we obtain:\\
$
{\tiny 
\begin{array}{rcl}
& & g(x_i) + \displaystyle \sum_{t=1}^q \left( \sum_{j \in P_t} {w_{ij}h\left(x_i,x_j\right)} +  \sum_{m \in \overline{P}_t} {w_{im}h\left(x_i,x_m\right)} \right) + \displaystyle \sum_{t=q+1}^p \left( \sum_{j \in P_t} w_{ij} h\left(x_i,x_j\right)\right) + \sum_{j \in P_0} {w_{ij}h\left(x_i,x_j\right)}
\end{array}}
$\\
$
{\tiny 
\begin{array}{rcl}
& = & g(y_l) + \displaystyle \sum_{t=1, t\not= l}^q \left( h\left(y_l,y_t\right) \sum_{j \in P_t} w_{ij}+  h\left(y_l,-y_t\right) \sum_{m \in \overline{P}_t} w_{im} \right) \\
 & & \\
 & & \quad      \displaystyle    +  h\left(y_l,y_l\right) \sum_{j \in P_l} w_{ij} +  h\left(y_l,-y_l\right) \sum_{m \in \overline{P}_l} w_{im}   +   
\displaystyle \sum_{t=q+1}^p  h(y_l, y_t) \left( \sum_{j \in P_t} w_{ij} \right) +
 h\left(y_l,0\right) \sum_{j \in P_0} w_{ij} 
 \end{array}}
 $\\
 $
 {\tiny 
 \begin{array}{rcl}
 & = & g(y_l) + \displaystyle \sum_{t=1, t\not=l}^q \left( \left( h(y_l,0) -  h(y_t,0)\right) \sum_{j \in P_t} w_{ij} +  \left( h(y_l,0) + h(y_t,0) \right) \sum_{m \in \overline{P}_t} w_{im}    \right) \\
 & & \\
  & & \quad     +  \displaystyle 2 h\left(y_l,0\right)  \sum_{m \in \overline{P}_l} w_{im} +  \displaystyle \sum_{t=q+1}^p  \left( h(y_l,0) -  h(y_t,0)\right) \left( \sum_{j \in P_t} w_{ij} \right) +     h\left(y_l,0\right) \sum_{j \in P_0} w_{ij} 
  \end{array}}
  $\\
  $
  {\tiny 
  \begin{array}{rcl}
  & = & g(y_l) + h(y_l,0) \displaystyle \left( \sum_{t=1, t\not=l}^q  \left( \sum_{j \in P_t} w_{ij}  + \sum_{m \in \overline{P}_t} w_{im}\right)  + 2 \sum_{m \in \overline{P}_l} w_{im}  
\displaystyle + \sum_{t=q+1}^p   \left( \sum_{j \in P_t} w_{ij} \right)  +  \sum_{j \in P_0} w_{ij} \right)   \\
  & & \\
   & & \quad +  \displaystyle \sum_{t=1, t\not=l}^q h(y_t,0) \left( - \sum_{j \in P_t} w_{ij}  + \sum_{m \in \overline{P}_t} w_{im} \right)  + \displaystyle \sum_{t=q+1}^p  \left(   h(y_t,0)\right) \left(  - \sum_{j \in P_t} w_{ij} \right)   
   \end{array}}
   $\\
   $
   {\tiny 
   \begin{array}{rcl}
    & = &  g(y_l) + h(y_l,0)  \displaystyle \left( \sum_{t=1, t\not= l}^{q} \left[ {\mathrm rs}\left(Q_{lt}\right) +  {\mathrm rs}\left(R_{lt}\right) \right] +  2  {\mathrm rs}\left(R_{ll}\right)  + 
  \sum_{t=q+1}^{p}   {\mathrm rs}\left(Q_{lt}\right)   +    {\mathrm rs}\left( Z_{l0} \right) \right)_i\\
    & & \\
    & & \quad + \displaystyle \sum_{t=1, t\not=l}^q h(y_t,0)  \left(- {\mathrm rs}(Q_{lt}) +  {\mathrm rs}(R_{lt}) \right)_i  
    +  \displaystyle \sum_{t=q+1}^p h(y_t,0)  \left(- {\mathrm rs}(Q_{lt})\right)_i   \\
     & & \\
  & = & g(y_l) +  h(y_l,0) r_l +  \displaystyle \sum_{t=1, t\not= l}^p h(y_t,0) q_{lt} \, .
 \end{array}}
$\\

Recall that,  for $1\leq l \leq q$, the column matrices  $- {\mathrm rs}(Q_{lt}) +  {\mathrm rs}(R_{lt})$ for $t=1, \ldots, q$, $t\not= l$ and  $-{\mathrm rs}(Q_{lt})$, for $t=q+1, \ldots, p$ are 
regular of valency $q_{lt}$. Also,  $\sum_{t=1, t \not= l}^{p}   {\mathrm rs}\left(Q_{lt}\right) +  \sum_{t=1, t\not= l}^{q}  {\mathrm rs}\left(R_{lt}\right) +  2  {\mathrm rs}\left(R_{ll}\right)  + 
   {\mathrm rs}\left( Z_{l0} \right)$ is regular of valency $r_l$. 
    
Similarly, in (\ref{eq:EDOsystem_Lag_lin}), if $i \in \overline{P}_l$ for $1\leq l \leq q$ and $x \in \Delta_{\mathcal{P}}$, using  conditions (i)-(ii) in Corollary~\ref{cor:mainLaplacian} and corresponding notation, we obtain:\\
$
{\tiny 
\begin{array}{l}
 g(x_i) + \displaystyle \sum_{t=1}^q \left( \sum_{j \in P_t} {w_{ij}h\left(x_i,x_j\right)} +  \sum_{m \in \overline{P}_t} {w_{im}h\left(x_i,x_m\right)} \right)  + \displaystyle \sum_{t=q+1}^p \left( \sum_{j \in P_t} w_{ij} h\left(x_i,x_j\right)\right) +  \sum_{j \in P_0} {w_{ij}h\left(x_i,x_j\right)}\\
\end{array}}
$\\
$
{\tiny 
\begin{array}{rcl}
  & = & \displaystyle  -g(y_l) -  h(y_l,0) r_l -  \sum_{t=1, t\not= l}^p  h(y_t,0) q_{lt}\, .
   \end{array}}
$

In (\ref{eq:EDOsystem_Lag_lin}), if $i \in P_l$ for $ l > q$ and $x \in \Delta_{\mathcal{P}}$, using  conditions (iii)-(iv) in Corollary~\ref{cor:mainLaplacian} and corresponding notation, we obtain:\\
$
{\tiny 
\begin{array}{l}
 g(x_i) + \displaystyle \sum_{t=1}^q \left( \sum_{j \in P_t} {w_{ij}h\left(x_i,x_j\right)} +  \sum_{m \in \overline{P}_t} {w_{im}h\left(x_i,x_m\right)} \right) + \displaystyle \sum_{t=q+1}^p \left( \sum_{j \in P_t} w_{ij} h\left(x_i,x_j\right)\right) +  \sum_{j \in P_0} {w_{ij}h\left(x_i,x_j\right)}
\end{array}}
$\\
$
{\tiny 
\begin{array}{rcl}
& = & g(y_l) + \displaystyle \sum_{t=1}^q \left( h\left(y_l,y_t\right) \sum_{j \in P_t} w_{ij}+  h\left(y_l,-y_t\right) \sum_{m \in \overline{P}_t} w_{im} \right) \\
 & & \\
 & & \quad      \displaystyle    +  h\left(y_l,y_l\right) \sum_{j \in P_l} w_{ij} + 
\displaystyle \sum_{t=q+1, t\not=l}^p  h(y_l, y_t) \left( \sum_{j \in P_t} w_{ij} \right) +  
 h\left(y_l,0\right) \sum_{j \in P_0} w_{ij} 
 \end{array}}
 $\\
 $
 {\tiny 
 \begin{array}{rcl}
 & = & g(y_l) + \displaystyle \sum_{t=1}^q \left( \left( h(y_l,0) -  h(y_t,0)\right) \sum_{j \in P_t} w_{ij} +  \left( h(y_l,0) + h(y_t,0) \right) \sum_{m \in \overline{P}_t} w_{im}    \right) \\
 & & \\
  & & \quad     +   \displaystyle \sum_{t=q+1, t\not= l}^p  \left( h(y_l,0) -  h(y_t,0)\right) \left( \sum_{j \in P_t} w_{ij} \right) +                             h\left(y_l,0\right) \sum_{j \in P_0} w_{ij} 
  \end{array}}
  $\\
  $
  {\tiny 
  \begin{array}{rcl}
  & = & g(y_l) + h(y_l,0) \displaystyle \left( \sum_{t=1}^q  \left( \sum_{j \in P_t} w_{ij}  + \sum_{m \in \overline{P}_t} w_{im}\right)  
 \displaystyle + \sum_{t=q+1, t\not=l}^p   \left( \sum_{j \in P_t} w_{ij} \right) +  \sum_{j \in P_0} w_{ij} \right)   \\
  & & \\
   & & \quad +  \displaystyle \sum_{t=1}^q h(y_t,0) \left( - \sum_{j \in P_t} w_{ij}  + \sum_{m \in \overline{P}_t} w_{im} \right)  + \displaystyle \sum_{t=q+1, t\not=l}^p  \left(   h(y_t,0)\right) \left(  - \sum_{j \in P_t} w_{ij} \right)   
   \end{array}}
   $\\
   $
   {\tiny 
   \begin{array}{rcl}
    & = &  g(y_l) + h(y_l,0)  \displaystyle \left( \sum_{t=1}^{q} \left[ {\mathrm rs}\left(Q_{lt}\right) +  {\mathrm rs}\left(R_{lt}\right) \right] + 
   \sum_{t=q+1, t\not=l}^{p}   {\mathrm rs}\left(Q_{lt}\right)   +    {\mathrm rs}\left( Z_{l0} \right) \right)_i\\
    & & \\
    & & \quad + \displaystyle \sum_{t=1}^q h(y_t,0)  \left(- {\mathrm rs}(Q_{lt}) +  {\mathrm rs}(R_{lt}) \right)_i  
    +  \displaystyle \sum_{t=q+1, t\not=l}^p h(y_t,0)  \left(- {\mathrm rs}(Q_{lt})\right)_i   \\
     & & \\
  & = & g(y_l) +  h(y_l,0) r_l +  \displaystyle \sum_{t=1, t\not= l}^p h(y_t,0) q_{lt} \, .
 \end{array}}
$\\
Recall that,  for $p \geq l >q$, the column matrices  $- {\mathrm rs}(Q_{lt}) +  {\mathrm rs}(R_{lt})$ for $t=1, \ldots, q$, and  $-{\mathrm rs}(Q_{lt})$, for $t=q+1, \ldots, p$, where $t \not= l$, are 
regular of valency $q_{lt}$. Also,  $\sum_{t=1, t \not= l}^{p}   {\mathrm rs}\left(Q_{lt}\right) +  \sum_{t=1}^{q}  {\mathrm rs}\left(R_{lt}\right)  
    +    {\mathrm rs}\left( Z_{l0} \right)$ is regular of valency $r_l$.

In (\ref{eq:EDOsystem_Lag_lin}), if $i \in P_0$ and $x \in \Delta_{\mathcal{P}}$, using  conditions (v)-(vi) in Corollary~\ref{cor:mainLaplacian} and corresponding notation, 
recalling that $h(0,0)=0$ and $g(0) =0$, we obtain:\\
$
{\tiny 
\begin{array}{rcl}
& & g(x_i) + \displaystyle \sum_{t=1}^q \left( \sum_{j \in P_t} {w_{ij} h\left(x_i,x_j\right)} +  \sum_{m \in \overline{P}_t} {w_{im}h\left(x_i,x_m\right)} \right) 
  \displaystyle + \sum_{t=q+1}^p \left(\sum_{j \in P_t} w_{ij} h\left(x_i,x_j\right)\right)  +  \sum_{j \in P_0} w_{ij} h\left(x_i,x_j\right) 
\end{array}}
$

\noindent 
$
{\tiny 
\begin{array}{rcl}
& = & \displaystyle  \sum_{t=1}^q \left( h\left(0,y_t\right) \sum_{j \in P_t} w_{ij}+  h\left(0,-y_t\right) \sum_{m \in \overline{P}_t} w_{im} \right)     \displaystyle + \sum_{t=q+1}^p h\left(0,y_t\right) \left(\sum_{j \in P_t} w_{ij} \right)  
\end{array} }
$\\
$
{\tiny 
\begin{array}{rcl}
& = &   \displaystyle  \sum_{t=1}^p h\left(0,y_t\right) \left( \sum_{j \in P_t} w_{ij}-  \sum_{m \in \overline{P}_t} w_{im} \right)  
+ \displaystyle  \sum_{t=q+1}^p h\left(0,y_t\right) \left( \sum_{j \in P_t} w_{ij} \right)\\
 \end{array}}
 $\\
 $
{\tiny 
\begin{array}{rcl}
 & = & \displaystyle  \sum_{t=1}^q h\left(0,y_t\right)  \left( {\mathrm rs}(Z_{0t})  - {\mathrm rs}\left(\overline{Z}_{0t}\right)\right)_i   
+  \sum_{t=q+1}^p h\left(0,y_t\right)  \left( {\mathrm rs}(Z_{0t}) \right)_i      =  0\, .
    \end{array}}
$

We have so that $\Delta_{\mathcal{P}}$ is invariant under the flow of the additive coupled cell system with equations (\ref{eq:EDOsystem_Lag_lin}).  That is, $\Delta_{\mathcal{P}}$ is left invariant under the flow of every system in $I_{G,l}$.
\end{proof}

From  Proposition~\ref{prop:IGL} and using  the notation of  the symbolic quotient of $G$ by a  linear-balanced tagged  partition  determined by the matrix (\ref{eq:quolin}) in Definition~\ref{def:quo_odd},  it follows the following proposition:

\begin{prop}
Let $G$ be  an $n$-cell network, $\mathcal{P}$ a linear-balanced tagged  partition on the set of cells of $G$ with parts $P_1, \ldots, P_p$, counterparts $\overline{P}_1, \ldots, \overline{P}_q$ and zero part $P_0$, and consider an enumeration of the set of cells adapted to  $\mathcal{P}$ providing a block structure (\ref{eq:oddbf}) of the adjacency matrix $W_G$. Consider the symbolic quotient of $G$ by $\mathcal{P}$ determined by the matrix (\ref{eq:quolin}) in Definition~\ref{def:quo_odd}. Denoting coordinates on $\Delta_{\mathcal{P}}$ by $(y_1, \ldots, y_p)$ where $y_i = x_k$ for (all) $ k \in P_i$, the restriction of (\ref{eq:EDOsystem}) to $\Delta_{\mathcal{P}}$ where $f \in I_{G,l}$  is given by:
\begin{equation} 
\dot{y}_i=g(y_i) +\sum_{j=1, j \not= i}^p q_{ij}h\left(y_j,0\right) + r_{i}h\left(y_i,0\right)  \quad \left( i=1,\ldots, p\right)\, .
\label{eq:linrestEDO}
\end{equation}
\end{prop}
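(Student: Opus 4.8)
The plan is to read the statement off the computation already performed in the proof of Proposition~\ref{prop:IGL}. First I would fix an enumeration of the cells of $G$ adapted to $\mathcal{P}$, so that $W_G$ has the block form (\ref{eq:oddbf}), and introduce coordinates $(y_1,\ldots,y_p)$ on $\Delta_{\mathcal{P}}$ by setting $y_i=x_k$ for $k\in P_i$; a point of $\Delta_{\mathcal{P}}$ then has $x_k=-y_i$ for $k\in\overline{P}_i$ (when $i\le q$) and $x_k=0$ for $k\in P_0$. Since $\mathcal{P}$ is linear-balanced, Proposition~\ref{prop:IGL} already guarantees that $\Delta_{\mathcal{P}}$ is flow-invariant under every $f\in I_{G,l}$, so the restricted system is well defined; it only remains to identify it in the coordinates $(y_1,\ldots,y_p)$.

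Next I would recall the three blocks of the computation in the proof of Proposition~\ref{prop:IGL}. Evaluating the right-hand side of (\ref{eq:EDOsystem}), rewritten as (\ref{eq:EDOsystem_Lag_lin}), at a point of $\Delta_{\mathcal{P}}$ and using the linearity identities for $h$ recalled at the start of this section together with conditions (i)--(iv) of Corollary~\ref{cor:mainLaplacian}, one finds that for $i\in P_l$ --- in both cases $1\le l\le q$ and $q<l\le p$ --- the value equals
$$
g(y_l)+r_l\,h(y_l,0)+\sum_{t=1,\ t\ne l}^{p}q_{lt}\,h(y_t,0),
$$
with $r_l$ and $q_{lt}$ the valencies appearing in the matrix (\ref{eq:quolin}). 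For $i\in\overline{P}_l$ the same manipulation gives minus this expression, which agrees with $\dot{x}_i=-\dot{y}_l$ and is therefore consistent with $x_i=-y_l$; and for $i\in P_0$, using conditions (v)--(vi) of Corollary~\ref{cor:mainLaplacian} together with $g(0)=0$ and $h(0,0)=0$, the value is $0$, consistent with $x_i=0$. Hence the restricted vector field descends to the coordinates $(y_1,\ldots,y_p)$ and is given exactly by (\ref{eq:linrestEDO}).

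Finally I would note that (\ref{eq:linrestEDO}) is precisely an admissible input-additive equation for the symbolic quotient of $G$ by $\mathcal{P}$, whose adjacency matrix is (\ref{eq:quolin}): the cells representing the counterparts $\overline{P}_i$ play the role of the negative states $-y_i$, while the cell representing $P_0$ is frozen at $0$ and contributes only through the terms $r_i\,h(y_i,0)$. Since no estimate is involved, the only real work is bookkeeping: matching the two index ranges $1\le l\le q$ and $q<l\le p$ of Corollary~\ref{cor:mainLaplacian} against the single unified formula (\ref{eq:linrestEDO}), and checking in each case that the $t=l$ contribution is absorbed into the term $r_l\,h(y_l,0)$ rather than into the sum over $t\ne l$. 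That reconciliation of cases is the only mild obstacle; everything else is a direct quotation of the computations in the proof of Proposition~\ref{prop:IGL}.
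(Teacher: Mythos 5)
Your proposal is correct and follows essentially the same route as the paper: the paper presents this proposition as a direct consequence of the explicit computations in the proof of Proposition~\ref{prop:IGL}, which already show that on $\Delta_{\mathcal{P}}$ the equation for a cell in $P_l$ reduces to $g(y_l)+r_l h(y_l,0)+\sum_{t\neq l}q_{lt}h(y_t,0)$, with the negated expression for counterparts and zero for $P_0$. Your bookkeeping of the $t=l$ term (absorbed into $r_l$ via $2\,{\mathrm rs}(R_{ll})$ when $l\le q$, absent when $l>q$) matches the paper's treatment.
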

		
\begin{exams} \normalfont  Consider the isomorphic six-cell networks in Figure~\ref{f:2ndsixSwift} and the linear-balanced partitions of Examples~\ref{exs:linear}. For the linear-balanced  partition of the network set of cells in Examples~\ref{exs:linear}~(i) with parts $P_1 =\{1,2\}, \, P_2 = \{ 3\}$ and counterparts $\overline{P}_1 =\{4,5\}, \, \overline{P}_2 = \{ 6\}$,  we have that any coupled cell system in $I_{G,l}$ restricted to 
  $$
  \Delta_{\mathcal{P}} = \{ (y_1,y_1, y_2, -y_1, -y_1, -y_2):\, y_1, y_2 \in \R^k\} 
  $$ 
  has the following  the form:
\begin{equation} 
\left\{
\begin{array}{l}
\dot{y}_1  =g(y_1) +  2 h(y_1,0)\\
\\
\dot{y}_2  =g(y_2) +  2 h(y_2,0)
\end{array}\, .
\right.
\label{eq:linexemplo}
\end{equation} \\
Consider now the linear-balanced partition of the network set of cells in Examples~\ref{exs:linear}~(ii) with one part $P_1 =\{1,2\}$, one counterpart $ \overline{P}_1 = \{ 3, 4\}$ and the zero part $P_0 =\{5,6\}$.  We have that any coupled cell system in $I_{G,l}$ restricted to 
  $$
  \Delta_{\mathcal{P}} = \{ (y_1,y_1,- y_1, -y_1, 0,0):\, y_1 \in \R^k\} 
  $$ 
  has the following  the form:
\begin{equation} 
\begin{array}{l}
\dot{y}_1  =g(y_1) +  2 h(y_1,0)\, .
\end{array}
\label{eq:2linexemplo}
\end{equation}
\hfill $\Diamond$ 
\end{exams}

\section{Even-odd-balanced partitions and anti-synchrony  in the class of the even-odd-input-additive coupled cell systems}\label{sec_eo_bal}

A non-standard generalized polydiagonal left invariant under the flow of every  even-odd-input-additive  coupled cell system admissible by a weighted network $G$ is an {\it anti-synchrony subspace} of $G$. We show next that these  anti-synchrony subspaces of $G$ are the non-standard  generalized polydiagonals associated with the even-odd-balanced partitions of $G$. 

Recall that in $I_{G,eo}$, 	$g$ is odd and the coupling function $h(x,y)$ is even in $x$ and odd in $y$. It follows in particular that $h(x,0)=0$ for all $x$. Also, we have that 
$h(-x,-y) = h(x,-y) = -h(x,y) = -h(-x,y)$ for all $x,y$. 
		
\begin{prop} \normalfont \label{prop:eoia}
Let $G$ be a weighted network and $\mathcal{P}$ a tagged partition of its set of cells 
which is not standard.  
The partition $\mathcal{P}$  is even-odd-balanced for $G$ if and only if the generalized polydiagonal $\Delta_{\mathcal{P}}$ is left invariant under the flow of every system in $I_{G,eo}$, for any given choice of total phase space $\left(\R^k\right)^n$.
\end{prop}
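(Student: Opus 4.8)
The plan is to follow the same two-directional template used in the proofs of Propositions~\ref{prop:odd_iff} and~\ref{prop:IGL}, but now keyed to the adjacency matrix rather than the Laplacian. First I would prove the ``only if'' direction. Suppose $\Delta_{\mathcal{P}}$ is left invariant under the flow of every $f \in I_{G,eo}$, for every total phase space. Specializing to $k=1$ and choosing $g(x) = x$ (odd) and $h(x,y) = y$ (even in $x$, odd in $y$) gives a linear vector field; by Proposition~\ref{prop:reg_nreg_L_W}(i) the linear subspace of linear fields in $I_{G,eo}$ is $\langle \mathrm{id}_n, W_G\rangle$, so $\Delta_{\mathcal{P}}$ is left invariant by $W_G$, and hence by Definition~\ref{def:linear} the partition $\mathcal{P}$ is even-odd-balanced. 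This direction is short.

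For the ``if'' direction, assume $\mathcal{P}$ is even-odd-balanced, i.e.\ by Proposition~\ref{thm:mainLaplacian} the block structure~(\ref{eq:oddbf}) of $W_G$ satisfies conditions~(\ref{eq:equal_Lap})--(\ref{eq:equal_Lap_Z}). Fix an enumeration of cells adapted to $\mathcal{P}$, take an arbitrary $f \in I_{G,eo}$ on an arbitrary phase space $(\R^k)^n$, and an initial condition in $\Delta_{\mathcal{P}}$; I must check that $\dot x$ lies in $\Delta_{\mathcal{P}}$, i.e.\ that for cells $i$ in the same part the equations agree, for $i$ in a part and $j$ in its counterpart the equations are negatives of each other, and for $i \in P_0$ the equation vanishes. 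The computation is the direct analogue of the one in the proof of Proposition~\ref{prop:IGL}, but simpler: I would rewrite equation~(\ref{eq:EDOsystem}) in the grouped form~(\ref{eq:EDOsystem_odd}), substitute $x_j = y_t$, $x_m = -y_t$ for cells in $P_t, \overline{P}_t$ and $x_j = 0$ for cells in $P_0$, and use the three identities recorded before the statement: $h(x,0) = 0$, $h(-x,-y) = h(x,-y) = -h(x,y) = -h(-x,y)$. The key simplification is that, because $h$ is \emph{even} in its first argument, the terms contributed by a part $P_t$ and its counterpart $\overline{P}_t$ collapse: for a cell $i \in P_l$ one gets $h(y_l, y_t)\,\mathrm{rs}(Q_{lt})_i + h(y_l,-y_t)\,\mathrm{rs}(R_{lt})_i = h(y_l,y_t)\bigl(\mathrm{rs}(Q_{lt}) - \mathrm{rs}(R_{lt})\bigr)_i$, which by condition~(\ref{eq:equal_Lap}) is $q_{lt}\,h(y_l,y_t)$ with $q_{lt}$ the valency appearing in Definition~\ref{def:quo_odd}(iii); the $P_0$-contribution is $h(y_l,0)\,\mathrm{rs}(Z_{l0})_i$, which is $0$ since $h(\cdot,0)=0$. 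Thus the equation for $i \in P_l$ reduces to $g(y_l) + \sum_{t} q_{lt}\,h(y_l,y_t)$, independent of the particular cell $i$. For $i \in \overline{P}_l$ the analogous computation, using $\mathrm{rs}(Q_{ij}) = -\mathrm{rs}(\overline{R}_{ij})$ and $\mathrm{rs}(\overline{Q}_{ij}) - \mathrm{rs}(\overline{R}_{ij}) = \mathrm{rs}(Q_{ij}) - \mathrm{rs}(R_{ij})$ from~(\ref{eq:equal_Lap}), together with $h(-y_l, \pm y_t) = -h(y_l, \pm y_t)$ and $g$ odd, produces exactly the negative, $-\bigl(g(y_l) + \sum_t q_{lt} h(y_l,y_t)\bigr)$. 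For $i \in P_0$ one substitutes $x_i = 0$; the internal term $g(0)=0$, the $P_0$--$P_0$ term drops because its coefficient involves $h(\cdot,0)=0$ or because $Z_{00}$ contributes $h(0,0)=0$, and the $P_t$ versus $\overline{P}_t$ terms cancel using $h(0,y_t) = h(0,-y_t)$ (even in the first slot) together with $\mathrm{rs}(Z_{0t}) = \mathrm{rs}(\overline{Z}_{0t})$, while the remaining parts contribute $h(0,y_t)\,\mathrm{rs}(Z_{0t}) = 0$ since $\mathrm{rs}(Z_{0t}) = 0$ for $t > q$. Hence $\dot x \in \Delta_{\mathcal{P}}$.

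Assembling these three cases shows $\Delta_{\mathcal{P}}$ is flow-invariant for every $f \in I_{G,eo}$, completing the proof. The main obstacle is purely organizational: keeping the bookkeeping of the many block sums straight and making sure the evenness-in-the-first-variable of $h$ is invoked in exactly the right places (it is what makes the $Q$- and $R$-blocks combine into $\mathrm{rs}(Q)-\mathrm{rs}(R)$ rather than $\mathrm{rs}(Q)+\mathrm{rs}(R)$, matching the adjacency-matrix invariance conditions~(\ref{eq:equal_Lap}) instead of the Laplacian ones). There is no genuine analytic difficulty — invariance of a linear subspace under a flow with additive structure reduces, as in the earlier propositions, to a finite linear-algebra identity among row sums of blocks, which is precisely what even-odd-balancedness encodes.
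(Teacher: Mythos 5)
Your overall route is exactly the paper's: the ``only if'' direction by specialising to a linear system in $I_{G,eo}$ (the paper uses $\dot x = W_G\,x$ via Proposition~\ref{prop:reg_nreg_L_W}, you use $\mathrm{id}_n+W_G$, which is the same argument) and then invoking Definition~\ref{def:linear}; the ``if'' direction by the blockwise computation modelled on Proposition~\ref{prop:IGL}, with the conditions of Proposition~\ref{thm:mainLaplacian} for $W_G$ making the $Q$- and $R$-blocks combine into ${\mathrm rs}(Q)-{\mathrm rs}(R)$. That is precisely how the paper argues, and your identification of where evenness in the first variable matters (killing the $h(\cdot,0)$ terms and collapsing part/counterpart contributions) is the right picture.

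However, two of the parity identities you invoke are misstated, and as written they would break the very cases they are meant to close. For $i\in\overline{P}_l$ you claim $h(-y_l,\pm y_t)=-h(y_l,\pm y_t)$; since $h$ is \emph{even} in its first argument this is false (it contradicts your own chain $h(-x,-y)=h(x,-y)=-h(x,y)=-h(-x,y)$, which gives $h(-y_l,y_t)=h(y_l,y_t)$ and $h(-y_l,-y_t)=-h(y_l,y_t)$), and if one computes with your version the counterpart row comes out as $-g(y_l)+\sum_t q_{lt}h(y_l,y_t)$ rather than the required negative. The minus sign in fact comes from $g$ odd, from oddness of $h$ in the \emph{second} argument (which flips only the $\overline{P}_t$-terms), and from the block conditions ${\mathrm rs}(\overline{Q}_{lt})-{\mathrm rs}(\overline{R}_{lt})$ having the same valency as ${\mathrm rs}(Q_{lt})-{\mathrm rs}(R_{lt})$ and ${\mathrm rs}(\overline{R}_{lt})=-{\mathrm rs}(Q_{lt})$ (valency-wise), exactly as in the paper's displayed computation. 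Similarly, for $i\in P_0$ you justify the cancellation by ``$h(0,y_t)=h(0,-y_t)$ (even in the first slot)''; the correct identity is $h(0,-y_t)=-h(0,y_t)$ (oddness in the second slot), which together with ${\mathrm rs}(Z_{0t})={\mathrm rs}(\overline{Z}_{0t})$ gives $h(0,y_t)\bigl({\mathrm rs}(Z_{0t})-{\mathrm rs}(\overline{Z}_{0t})\bigr)_i=0$ — with your identity the two contributions would add, not cancel. Both slips are local and repaired by the identities you yourself listed at the outset, so the proof is salvageable essentially verbatim, but as written the counterpart and zero-part cases do not go through.
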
		
		
\begin{proof}  Let $G$ be an $n$-cell weighted network with set of cells $C$ and adjacency matrix $W_G$. Consider a  tagged partition $\mathcal{P}$ of $C$ with parts $P_1, P_2, \ldots, P_p,$  counterparts $\overline{P}_1, \overline{P}_2, \ldots, \overline{P}_q$, zero part $P_0$ and the corresponding generalized polydiagonal $\Delta_{\mathcal{P}}$.

Assume  $\Delta_{\mathcal{P}}$ is left invariant under the flow of every system in $I_{G,eo}$. In particular, by Proposition~\ref{prop:reg_nreg_L_W}, the coupled cell system $\dot{x} = W_G \, x$ where $x \in \R^n$, is even-odd-input-aditive. Thus, the space $\Delta_{\mathcal{P}}$ is left invariant under $W_G$. By Definition~\ref{def:linear}, we have that $\mathcal{P}$ is even-odd-balanced.

Assume now that the partition $\mathcal{P}$ is even-odd-balanced for $G$ and consider an enumeration of the cells of $G$ adapted to $\mathcal{P}$ so that 
the adjacency matrix $W_G$ of $G$ has  a block structure (\ref{eq:oddbf}). By Definition~\ref{def:linear}, for $k=1$ the space 
$\Delta_{\mathcal{P}}$ is  left invariant by the matrix $W_G$, which is equivalent to  the entries of $W_G$ satisfying the conditions in Proposition~\ref{thm:mainLaplacian}. 
Consider an  additive coupled cell system in $I_{G,eo}$, with equations
\begin{equation} 
{\tiny 
\begin{array}{rcl}
\dot{x}_i & = & g(x_i) + \displaystyle \sum_{t=1}^q \left( \sum_{j \in P_t} {w_{ij}h\left(x_i,x_j\right)} +  \sum_{m \in \overline{P}_t} {w_{im}h\left(x_i,x_m\right)} \right)  + \displaystyle \sum_{t=q+1}^p \left( \sum_{j \in P_t} w_{ij} h\left(x_i,x_j\right)\right) + \sum_{j \in P_0} {w_{ij}h\left(x_i,x_j\right)}, 
\end{array}}
\label{eq:EDOsystem_Lag_eo}
\end{equation}
for $i=1,\ldots,n$, where $g$ is odd and $h$ is even in $x$ and odd in $y$. 
Consider coordinates $\left(y_1, \ldots, y_p\right)$ in $\Delta_{\mathcal{P}}$ where: for $1 \leq t \leq q$, we take $y_t = x_j = -x_m$ for all $j \in P_t$ and $m \in \overline{P}_t$;  for $q+1 \leq t  \leq p$, we have  $y_t = x_j$ for all $j \in P_t$; also, $x_j = 0$ for all $j \in P_0$. We have so $h(y_t,0) = 0$. The proof now follows as in the proof of Proposition~\ref{prop:IGL}. As an  example, note that in (\ref{eq:EDOsystem_Lag_eo}), if $i \in P_l$ for $1\leq l \leq q$ and $x \in \Delta_{\mathcal{P}}$, we obtain:\\
$
{\tiny 
\begin{array}{rcl}
& & g(x_i) + \displaystyle \sum_{t=1}^q \left( \sum_{j \in P_t} {w_{ij}h\left(x_i,x_j\right)} +  \sum_{m \in \overline{P}_t} {w_{im}h\left(x_i,x_m\right)} \right) + \displaystyle \sum_{t=q+1}^p \left( \sum_{j \in P_t} w_{ij} h\left(x_i,x_j\right)\right) + \sum_{j \in P_0} {w_{ij}h\left(x_i,x_j\right)}
\end{array}}
$\\
$
{\tiny 
\begin{array}{rcl}
& = & g(y_l) + \displaystyle \sum_{t=1}^q \left( h\left(y_l,y_t\right) \sum_{j \in P_t} w_{ij}+  h\left(y_l,-y_t\right) \sum_{m \in \overline{P}_t} w_{im} \right)   +   
  \displaystyle \sum_{t=q+1}^p  \left( h(y_l, y_t) \sum_{j \in P_t} w_{ij} \right) + h\left(y_l,0\right) \sum_{j \in P_0} w_{ij} 
 \end{array}}
 $\\
 $
{\tiny 
\begin{array}{rcl}
& = & g(y_l) + \displaystyle \sum_{t=1}^q \left( h\left(y_l,y_t\right) \sum_{j \in P_t} w_{ij} -  h\left(y_l,y_t\right) \sum_{m \in \overline{P}_t} w_{im} \right)    +    
 \displaystyle \sum_{t=q+1}^p   \left( h(y_l, y_t) \  \sum_{j \in P_t} w_{ij} \right)
 \end{array}}
 $\\
 $
{\tiny 
\begin{array}{rcl}
& = & g(y_l) + \displaystyle \sum_{t=1}^q h\left(y_l,y_t\right) \left( \sum_{j \in P_t} w_{ij} -  \sum_{m \in \overline{P}_t} w_{im} \right)  +    
 \displaystyle \sum_{t=q+1}^p  \left( h(y_l, y_t) \sum_{j \in P_t} w_{ij} \right) 
 \end{array}}
 $\\
 $
{\tiny 
\begin{array}{rcl}
& = & g(y_l) + \displaystyle \sum_{t=1}^q h\left(y_l,y_t\right) \left(   {\mathrm rs}\left(Q_{lt}\right) -  {\mathrm rs}\left(R_{lt}\right)  \right)_i +    
 \displaystyle \sum_{t=q+1}^p  h(y_l, y_t) \left( {\mathrm rs}\left(Q_{lt}\right) \right)_i 
 \end{array}}\\
 $
Similarly, in (\ref{eq:EDOsystem_Lag_eo}), if $i \in \overline{P}_l$ for $1\leq l \leq q$ and $x \in \Delta_{\mathcal{P}}$, we obtain:\\
$
{\tiny 
\begin{array}{rcl}
& & g(x_i) + \displaystyle \sum_{t=1}^q \left( \sum_{j \in P_t} {w_{ij}h\left(x_i,x_j\right)} +  \sum_{m \in \overline{P}_t} {w_{im}h\left(x_i,x_m\right)} \right) + \displaystyle \sum_{t=q+1}^p \left( \sum_{j \in P_t} w_{ij} h\left(x_i,x_j\right)\right) + \sum_{j \in P_0} {w_{ij}h\left(x_i,x_j\right)}
\end{array}}
$\\
 $
{\tiny 
\begin{array}{rcl}
& = & -g(y_l) - \displaystyle \sum_{t=1}^q h\left(y_l,y_t\right) \left(    {\mathrm rs}\left(\overline{Q}_{lt}\right)       - {\mathrm rs}\left(\overline{R}_{lt}\right)   \right)_i    
-    \displaystyle \sum_{t=q+1}^p  h(y_l, y_t) \left( -{\mathrm rs}\left(\overline{R}_{lt}\right) \right)_i 
 \end{array}}
 $\\
The rest of the proof follows in a similar way as in the proof of Proposition \ref{prop:IGL} using the conditions in Proposition~\ref{thm:mainLaplacian} for $W_G$. 
\end{proof}

The following proposition  describes the restrictions of coupled cell systems which are even-odd-input-additive to  anti-synchrony spaces.

\begin{prop}
Let $G$ be  an $n$-cell network, $\mathcal{P}$ an even-odd--balanced partition on the set of cells of $G$  which is not standard, 
with parts $P_1, \ldots, P_p$, counterparts $\overline{P}_1, \ldots, \overline{P}_q$ and zero part $P_0$, and consider an enumeration of the set of cells adapted to  $\mathcal{P}$ providing a block structure (\ref{eq:oddbf}) of the adjacency matrix $W_G$. Consider the symbolic quotient of $G$ by $\mathcal{P}$ determined by the matrix (\ref{eq:quoeo}) in Definition~\ref{def:quo_odd}. Denoting coordinates on $\Delta_{\mathcal{P}}$ by $(y_1, \ldots, y_p)$ where $y_i = x_k$ for (all) $ k \in P_i$, the restriction of (\ref{eq:EDOsystem}) to $\Delta_{\mathcal{P}}$ where $f \in I_{G,eo}$  is given by:
\begin{equation} 
\dot{y}_i=g(y_i) +\sum_{j=1}^p q_{ij}h\left(y_i,y_j\right)  \quad \left( i=1,\ldots, p\right)\, .
\label{eq:eorestEDO}
\end{equation}
\end{prop}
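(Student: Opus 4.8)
The plan is to read the restricted equations directly off the computation already performed in the proof of Proposition~\ref{prop:eoia}. Since $\mathcal{P}$ is even-odd-balanced, Proposition~\ref{prop:eoia} guarantees that $\Delta_{\mathcal{P}}$ is flow-invariant under every system in $I_{G,eo}$, so the restriction of (\ref{eq:EDOsystem}) to $\Delta_{\mathcal{P}}$ is well defined; it remains only to exhibit its form. First I would fix $f \in I_{G,eo}$ and recall the identities valid in this class: $g$ is odd, $h(x,0)=0$ for all $x$, and $h(x,-y) = -h(x,y) = h(-x,-y)$.

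Next I would introduce the coordinates $(y_1,\dots,y_p)$ on $\Delta_{\mathcal{P}}$ as in the statement, namely $y_t = x_j = -x_m$ for $j\in P_t$, $m\in\overline{P}_t$ when $t\le q$, $y_t = x_j$ for $j\in P_t$ when $t>q$, and $x_j=0$ for $j\in P_0$, and substitute into the form (\ref{eq:EDOsystem_Lag_eo}) of (\ref{eq:EDOsystem}) for a cell $i\in P_l$. Using $h(y_l,-y_t) = -h(y_l,y_t)$ for the $\overline{P}_t$-blocks and $h(y_l,0)=0$ for the $P_0$-block, the right-hand side collapses to $g(y_l) + \sum_{t=1}^{q} h(y_l,y_t)\,({\mathrm rs}(Q_{lt}) - {\mathrm rs}(R_{lt}))_i + \sum_{t=q+1}^{p} h(y_l,y_t)\,({\mathrm rs}(Q_{lt}))_i$, which is exactly the expression displayed for $l\le q$ (and analogously for $l>q$) inside the proof of Proposition~\ref{prop:eoia}. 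By Proposition~\ref{thm:mainLaplacian}, the even-odd-balanced hypothesis forces ${\mathrm rs}(Q_{lt}) - {\mathrm rs}(R_{lt})$ to be regular of valency $q_{lt}$ for $1\le t\le q$ and ${\mathrm rs}(Q_{lt})$ to be regular of valency $q_{lt}$ for $q+1\le t\le p$; these valencies are precisely the entries of the matrix (\ref{eq:quoeo}). Hence the $i$-th component of the restricted field for $i\in P_l$ equals $g(y_l) + \sum_{j=1}^{p} q_{lj}\,h(y_l,y_j)$, independently of the choice of $i\in P_l$, which is (\ref{eq:eorestEDO}) after relabelling $l\mapsto i$ and $t\mapsto j$.

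To finish I would dispatch the remaining cells. For $i\in\overline{P}_l$ with $l\le q$, the same substitution, using oddness of $g$ and of $h$ in the second variable together with the regularity of the $\overline{Q}$- and $\overline{R}$-blocks supplied by Proposition~\ref{thm:mainLaplacian}, gives the negative of the above expression, so $\dot{x}_i = -\dot{y}_l$ is consistent with $x_i = -y_l$ and imposes nothing new. For $i\in P_0$, the conditions ${\mathrm rs}(Z_{0t}) = {\mathrm rs}(\overline{Z}_{0t})$ for $t\le q$ and ${\mathrm rs}(Z_{0t}) = 0$ for $t>q$ make the right-hand side vanish, consistent with $x_i=0$. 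The argument is componentwise on $\R^k$, since evenness and oddness of $h$ and $g$ hold coordinate by coordinate, so nothing changes for general phase space $\left(\R^k\right)^n$.

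I do not expect a serious obstacle here: the content is essentially bookkeeping, and the core calculation appears verbatim in the proof of Proposition~\ref{prop:eoia}. The only point needing a little care is matching, according to whether $j\le q$ or $j>q$, the two different block-difference expressions (${\mathrm rs}(Q_{ij}) - {\mathrm rs}(R_{ij})$ versus ${\mathrm rs}(Q_{ij})$) with the entries $q_{ij}$ of the symbolic quotient matrix (\ref{eq:quoeo}), and confirming that the equations for cells in $\overline{P}_l$ and in $P_0$ are automatically satisfied rather than producing extra constraints.
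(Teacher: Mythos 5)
Your proposal is correct and follows essentially the same route as the paper: the paper states this proposition as a direct consequence of the computation carried out in the proof of Proposition~\ref{prop:eoia}, and you do exactly that, matching the valencies of ${\mathrm rs}(Q_{ij})-{\mathrm rs}(R_{ij})$ (for $j\le q$) and ${\mathrm rs}(Q_{ij})$ (for $j>q$) with the entries $q_{ij}$ of (\ref{eq:quoeo}) and checking that the equations for cells in the counterparts and in $P_0$ are automatically consistent. No gaps.
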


\section{The set of synchrony and anti-synchrony subspaces of a weighted network} \label{sec:algm}

In \cite{AD18}, Aguiar and Dias, extend previous results on the coupled cell networks formalism of Golubitsky, Stewart and collaborators to the setup of weighted coupled cell networks considering input additive coupled cell systems. Some of those results have to do with the polydiagonal subspaces of the network phase space, assuming one dimensional cell dynamics, that are left invariant by the network weighted adjacency matrix. These correspond to the polydiagonal subspaces that are flow invariant by all the input additive coupled cell systems that are admissible by the network structure. That is, they correspond to the synchrony subspaces of the weighted network that are given by the balanced 
 partitions  of   the network set of cells.

In \cite{AD18}, taking the results in Stewart~\cite{S07}, Aguiar and Dias conclude that the set  of the synchrony subspaces of a weighted coupled cell network, in one-to-one correspondence with the  balanced partitions of  the  set of cells of the network, is a lattice with the partial order given by inclusion and the meet operation given by intersection. Moreover, they conclude that both the characterization and the algorithm obtained in Aguiar and Dias~\cite{AD14}, where the lattice of synchrony subspaces of a network can be obtained using the eigenvalue and eigenvector structure of its adjacency matrix, apply to the weighted setup.

Here, we enlarge the set of the polydiagonal subspaces by considering the generalized polydiagonal subspaces that are left invariant by the adjacency matrix and/or the Laplacian matrix of a network. 
The 
{\it synchrony subspaces} of  a network correspond to the polydiagonal subspaces that are given by the exo-balanced partitions on the network set of cells. These are flow invariant by the exo-input-additive coupled cell systems. The subset of the synchrony subspaces that are given by the balanced 
partitions are flow invariant by the larger space of the input-additive coupled cell systems. 
The  
{\it anti-synchrony subspaces} 
of a network correspond to the generalized polydiagonal subspaces that are given by the linear-balanced and even-odd-balanced  partitions on the network set of cells. These are flow invariant by the linear-input-additive and even-odd-input-additive  coupled cell systems, respectively. 
Recall that the linear-input-additive coupled cell systems are the coupled cell systems with input additive structure where the internal function 
$g$ is odd and the coupling function $h$ is linear (and odd) and satisfies $h(x,x) \equiv 0$. The even-odd-input-additive  coupled cell systems are  the coupled cell systems with input additive structure where the internal function $g$ is odd and the coupling function $h$ is even in the first variable and odd in the second variable.
The subset of the anti-synchrony subspaces that are given by the odd-balanced 
partitions
are flow invariant by the  
space of the odd-input-additive coupled cell systems. 
Recall that the space of the odd-input-additive coupled cell systems is larger than the space  the linear-input-additive coupled cell systems as the coupling 
function does not have to be necessarily linear.

\begin{Def}\normalfont 
Given an $n$-cell weighted network $G$, denote by ${\mathcal L}_{W_G}$ and by ${\mathcal L}_{L_G}$, the set of the generalized polydiagonal subspaces that are left invariant by the adjacency matrix $W_G$ and the Laplacian matrix $L_G$ of $G$, respectively.
\hfill $\Diamond$
\end{Def}

From the results in the previous sections,  ${\mathcal L}_{W_G} \cup {\mathcal L}_{L_G}$ corresponds to the set of the synchrony and anti-synchrony subspaces of a network $G$. Moreover, we have the following result.

\begin{thm} \label{thm:final}
Let $G$ be a weighted network and consider the set  ${\mathcal L}_{W_G} \cup {\mathcal L}_{L_G}$ of the  synchrony and anti-synchrony subspaces of  $G$. 
Let $\Delta_{\footnotesize{\mathcal{P}}}$ be a subspace in ${\mathcal L}_{W_G} \cup {\mathcal L}_{L_G}$ and $\mathcal{P}$ the associated tagged partition. We have that, \\
(i) $\Delta_{\footnotesize{\mathcal{P}}}$ is a synchrony subspace for every system in \\
\hspace{0.1in} (i.a) $I_G$, $I_{G,eo}$  if and only if $\mathcal{P}$ is balanced; \\
\hspace{0.1in}(i.b) $I_{G,0}$, $I_{G,odd}$, or $I_{G,l}$ if and only if $\mathcal{P}$ is exo-balanced. \\
(ii) $\Delta_{\footnotesize{\mathcal{P}}}$ is an anti-synchrony subspace for every system in \\
(ii.a) $I_{G,odd}$ if and only if $\mathcal{P}$ is odd-balanced; \\
(ii.b) $I_{G,l}$ if and only if $\mathcal{P}$ is linear-balanced; \\
(ii.c) $I_{G,eo}$ if and only if $\mathcal{P}$ is even-odd-balanced.
\end{thm}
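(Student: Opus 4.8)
The plan is to deduce Theorem~\ref{thm:final} by assembling the equivalences already proved in Sections~\ref{sec_bal}--\ref{sec_eo_bal}, together with the inclusions $I_{G,l}\subseteq I_{G,odd}\subseteq I_{G,0}\subseteq I_G$ and $I_{G,eo}\subseteq I_G$ and the identification of the linear admissible vector fields in Proposition~\ref{prop:reg_nreg_L_W}. The hypothesis $\Delta_{\mathcal{P}}\in{\mathcal L}_{W_G}\cup{\mathcal L}_{L_G}$ plays no role in establishing the individual equivalences --- it merely records that the list on the right is exhaustive --- so I would prove (i) and (ii) for an arbitrary tagged partition $\mathcal{P}$. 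Beyond the results already in those sections no new computation is required; the work is the bookkeeping showing that the two groupings in (i.a) and (i.b) each single out a single class of invariant (generalized) polydiagonals.

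For part~(i) I would first recall that a synchrony subspace is, by definition, a standard polydiagonal, and that for a standard partition $\mathcal{P}$ invariance of $\Delta_{\mathcal{P}}$ under $W_G$ (resp.\ $L_G$) is exactly the balanced (resp.\ exo-balanced) condition, by Definition~\ref{def:balanced} and Corollary~\ref{thm:mainpart_exo}. For~(i.a): if $\mathcal{P}$ is balanced then $\Delta_{\mathcal{P}}$ is left invariant by every system in $I_G$ by Theorem~\ref{thm:SSBR}, hence also by every system in $I_{G,eo}\subseteq I_G$; conversely, if $\Delta_{\mathcal{P}}$ is left invariant by every system in $I_{G,eo}$ then, taking cell phase space $\R$, it is left invariant by the linear field $x\mapsto W_G\,x$, which lies in $I_{G,eo}$ by Proposition~\ref{prop:reg_nreg_L_W}(i), so $\mathcal{P}$ is balanced. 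For~(i.b): if $\mathcal{P}$ is exo-balanced then $\Delta_{\mathcal{P}}$ is left invariant by every system in $I_{G,0}$ by the Proposition of Section~\ref{sec_exo_bal}, hence also by every system in $I_{G,odd}$ and in $I_{G,l}$; conversely, invariance under all of $I_{G,l}$ forces (again with cell phase space $\R$) invariance under $x\mapsto L_G\,x\in I_{G,l}$ (Proposition~\ref{prop:reg_nreg_L_W}(i)), i.e.\ $\mathcal{P}$ is exo-balanced. This closes both chains, so $I_G$ and $I_{G,eo}$ are governed by the balanced partitions and $I_{G,0}$, $I_{G,odd}$, $I_{G,l}$ are all governed by the exo-balanced partitions.

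Part~(ii) needs no assembly: an anti-synchrony subspace is by definition a \emph{non-standard} generalized polydiagonal left invariant by every system in the relevant class, so (ii.a), (ii.b) and (ii.c) are, respectively, exactly Propositions~\ref{prop:odd_iff}, \ref{prop:IGL} and~\ref{prop:eoia}, each already stated for tagged partitions that are not standard, in accordance with the definitions of odd-, linear- and even-odd-balanced partitions. I do not expect a genuine obstacle; the only point to get right is that $I_{G,eo}$ must be grouped with $I_G$ rather than with $I_{G,0}$, because a coupling function even in its first variable need not satisfy $h(x,x)=0$ (e.g.\ $h(x,y)=xy$), so the diagonal blocks $Q_{ii}$ stay relevant for $I_{G,eo}$ and the correct condition is ``balanced'' and not ``exo-balanced''.
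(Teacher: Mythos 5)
Your proposal is correct and follows essentially the paper's own route: the paper gives no separate argument for Theorem~\ref{thm:final}, deducing it exactly as you do from Theorem~\ref{thm:SSBR}, the proposition of Section~\ref{sec_exo_bal}, Propositions~\ref{prop:odd_iff}, \ref{prop:IGL}, \ref{prop:eoia}, the inclusions among the classes, and Proposition~\ref{prop:reg_nreg_L_W} identifying $W_G$ and $L_G$ as admissible linear fields for the converses. The only slip is your illustrative example in the last sentence: $h(x,y)=xy$ is odd, not even, in the first variable, so it does not lie in the even--odd class; take instead $h(x,y)=y$ or $h(x,y)=x^2y$, which satisfy the even--odd conditions but not $h(x,x)=0$ --- this does not affect your argument, since the point that $I_{G,eo}\not\subseteq I_{G,0}$ remains valid.
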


\begin{rem}
(i) Given an $n$-cell weighted network $G$, we have that ${\mathcal L}_{W_G}$ and ${\mathcal L}_{L_G}$, are lattices with the partial order and meet operations given by the inclusion and intersection, respectively, as happens for the lattice of polydiagonal subspaces that are left invariant by the adjacency matrix of a network (synchrony subspaces given by the balanced standard partitions). In fact, observe that the intersection of a synchrony subspace with a synchrony is a synchrony space, and the intersection of a synchrony space with an anti-synchrony subspace, or the intersection of  two anti-synchrony spaces, is an anti-synchrony subspace. \\
(ii) The set of subspaces invariant under a linear map  forms a complete lattice under the relation of inclusion. Moreover, this lattice can be described using the Jordan subspaces, the irreducible invariant subspaces having a unique eigenvector (up to multiplication by a scalar). In this lattice the meet operation is the intersection and the join operation is the sum. We can apply this to the set of all the spaces that are invariant under the network adjacency matrix and the network Laplacian matrix, to conclude that they are complete lattices  where the meet  operation is the intersection and the join operation is the sum, and both can be obtained from the corresponding Jordan subspaces. \\ 
(iii) As the Laplacian matrix $L_G$ is regular, we have that the one-dimensional diagonal space where all cell coordinates are equal belongs to ${\mathcal L}_{L_G}$. 
 However, the bottom of ${\mathcal L}_{L_G}$ is the zero space as it is always invariant under the Laplacian matrix. The same holds for  ${\mathcal L}_{W_G}$. Equivalently, any  linear-input-additive coupled cell system and any even-odd-input-additive has the zero equilibrium.\\
(iv) As mentioned in (ii), the join operation for the lattice of the subspaces which are left invariant under the network adjacency matrix or Laplacian matrix is given by the usual sum. However, analogously to happens for synchrony subspaces, the sum of two synchrony or anti-synchrony subspaces may not be a generalized polydiagonal subspace and so the join operation for the lattices ${\mathcal L}_{W_G}$ and ${\mathcal L}_{L_G}$  is not the sum. Moreover, there is no explicit form of describing the join of two synchrony or anti-synchrony subspaces. 
Thus both lattices ${\mathcal L}_{W_G}$ and ${\mathcal L}_{L_G}$ are subsets of the lattices of the invariant subspaces under the network adjacency matrix and the network Laplacian matrix, respectively, but are not sublattices.
\hfill $\Diamond$ 
\end{rem}

From Proposition~\ref{prop:subset}, when $G$ is a regular network, ${\mathcal L}_{W_G} = {\mathcal L}_{L_G}$. If $G$ is not regular, then in general, 
${\mathcal L}_{W_G} \not= {\mathcal L}_{L_G}$, moreover, neither ${\mathcal L}_{W_G},\ {\mathcal L}_{L_G}$ is strictly included in the other.

%
%

The work in Aguiar and Dias~\cite{AD14} extends in a natural way, by considering generalized polydiagonal subspaces and using the eigenvalue and eigenvector structures of the adjacency matrix $W_G$ and the Laplacian matrix $L_G$, to obtain the lattices ${\mathcal L}_{W_G}$ and ${\mathcal L}_{L_G}$ and, thus, to obtain the set of the synchrony and anti-synchrony  subspaces of a network $G$.
Although the lattices  join operation is not given by the sum, as in \cite{AD14}, we can conclude that, for each lattice, there is a subset of synchrony and  anti-synchrony subspaces, called {\it minimal}, with the property of every remaining synchrony or anti-synchrony subspace in the lattices ${\mathcal L}_{W_G}$ and ${\mathcal L}_{L_G}$ being a sum of subspaces in that minimal subset. Each minimal synchrony or anti-synchrony subspace of ${\mathcal L}_{W_G}$ (${\mathcal L}_{L_G}$) is associated to an eigenvector or a set of generalized eigenvectors of $W_G$ ($L_G$).  
We have then that the Algorithm 6.5 in \cite{AD14}, for networks with only one edge-type, can be easily adapted to find the lattices ${\mathcal L}_{W_G}$ and ${\mathcal L}_{L_G}$ for a weighted network $G$ and, thus, to find the set of synchrony and anti-synchrony subspaces of $G$.  In fact, the only step of the algorithm which needs adaptation is the first one where, for each eigenvalue $\lambda$ of the matrix, the table that is constructed, besides the polydiagonal subspaces, must also contain all the generalized polydiagonal  subspaces, for the eigenvectors and Jordan chains in the generalized eigenspace for $\lambda$. This step relies on Lemma 6.1 in  \cite{AD14}, which generalizes easily for the case where, besides conditions of the form $x_{l_1} = x_{l_2}$, we have also equalities of the from $x_{l_1} = -x_{l_2}$ or $x_{l_1} = 0$.

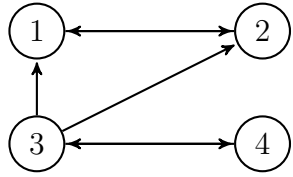
\begin{figure}[h!]
\begin{center}
\begin{tikzpicture}[->,>=stealth',shorten >=1pt,auto, node distance=1.5cm, thick,node/.style={circle,draw}]
                           \node[node]	(1) at (-6cm, 4cm) [fill=white] {$1$};
			\node[node]	(2) at (-3cm, 4cm)  [fill=white] {$2$};
			\node[node]	(3) at (-6cm, 2.5cm) [fill=white] {$3$};
			\node[node]	(4) at (-3cm, 2.5cm)  [fill=white] {$4$};

                                                   \path                                                   
			
				(1) edge node {} (2)
				(2) edge node {} (1)
				(3) edge node {} (4)
				(4) edge node {} (3)
				(3) edge node {} (1)
				(3) edge node {} (2);    
		\end{tikzpicture}	
		\end{center}
		\caption{A four-cell network.}
		\label{contra_exemplo}
	\end{figure}

\begin{exam} \label{ex:qutro}
Consider the four-cell network $G$ in Figure~\ref{contra_exemplo}. Considering the exposition above, we compute the lattices ${\mathcal L}_{W_G}$ and ${\mathcal L}_{L_G}$  of $G$ by considering the referred adaptation of Algorithm 6.5 in  Aguiar and Dias~\cite{AD14}. 

We have that
$$
W_{G} =
	\left( 
 \begin{array}{cccc}
0 &  1 & 1 & 0 \\
1 & 0 & 1  & 0 \\
0 & 0 & 0 & 1 \\
0 & 0 & 1 & 0  
 \end{array}
 \right) \quad \mbox{ and } \quad 
 L_{G} =
	\left( 
 \begin{array}{cccc}
2 &  -1 & -1 & 0 \\
-1 & 2 & -1  & 0 \\
0 & 0 & 1 & -1 \\
0 & 0 & -1 & 1  
 \end{array}
 \right)\, .
$$

The  eigenvalues of $W_G$ are $-1,1$ and the corresponding eigenspaces and generalized eigenspaces are
{\small 
$$
E_{-1} = <(1,-1,0,0),(1,1,-2,2)>,\ E_1 = <(1,1,0,0)>,\ G_1 = <(1,1,0,0), (1,1,1,1)>\, .
$$
}
We start by identifying the generalized polydiagonals associated with the (generalized) eigenvectors of $W_G$. That is, for each (generalized) eigenvector we consider the generalized polydiagonal with less dimension containing the eigenvector.
These are,
$$
\begin{array}{ll}
\Delta_{\footnotesize{\mathcal{P}_1}} = \{ {\bf x}:\ x_1=-x_2, x_3=x_4=0 \},  \qquad &\Delta_{\footnotesize{\mathcal{P}_2}} = \{ {\bf x}:\ x_1=x_2, x_3=-x_4 \},  \\
 \\
\Delta_{\footnotesize{\mathcal{P}_3}} = \{ {\bf x}:\ x_2=x_4=-x_3, x_1=0 \}, &  \Delta_{\footnotesize{\mathcal{P}_4}} = \{ {\bf x}:\ x_1=x_4=-x_3, x_2=0 \}, \\
\\
\Delta_{\footnotesize{\mathcal{P}_5}}  = \{ {\bf x}:\ x_1=x_2, x_3=x_4=0 \}, & \Delta_{\footnotesize{\mathcal{P}_6}}  = \{ {\bf x}:\ x_1=x_2, x_3=x_4 \}.
\end{array}
$$ 
Next, checking whether or not these generalized polydiagonal subspaces have an eigenvector basis, we conclude that, since
$$
\begin{array}{lll}
\Delta_{\footnotesize{\mathcal{P}_1}}  = <(1,-1,0,0)>, \quad & \Delta_{\footnotesize{\mathcal{P}_2}} = <(1,1,-2,2)> \oplus E_{1}, \quad & \Delta_{\footnotesize{\mathcal{P}_3}} =  <(0,-2,2,-2)>, \\
\\
\Delta_{\footnotesize{\mathcal{P}_4}}= <(2,0,-2,2)>, \quad &  \Delta_{\footnotesize{\mathcal{P}_5}} = E_1, \quad & \Delta_{\footnotesize{\mathcal{P}_6}} = G_1, 
\end{array}
$$
they are all left invariant by the matrix $W_G$. All these subspaces are anti-synchrony  subspaces, with the exception of $\Delta_{\footnotesize{\mathcal{P}_6}}$ that is a synchrony subspace. By the results in  \cite{AD14}, they form a sum-dense set for the lattice ${\mathcal L}_{W_G}$. 
Considering the possible sums of two or more of these subspaces, we get one more synchrony and two more anti-synchrony subspaces for $G$ in ${\mathcal L}_{W_G}$,
$$
\begin{array}{ll}
\Delta_{\footnotesize{\mathcal{P}_7}}  = \Delta_{\footnotesize{\mathcal{P}_1}}  \oplus \Delta_{\footnotesize{\mathcal{P}_2}} = \{ {\bf x}:\  x_4=-x_3 \} , \quad & \Delta_{\footnotesize{\mathcal{P}_8}}   = \Delta_{\footnotesize{\mathcal{P}_2}}  \oplus \Delta_{\footnotesize{\mathcal{P}_6}} = \{ {\bf x}:\  x_1=x_2 \}, \\
 \Delta_{\footnotesize{\mathcal{P}_9}}  = \Delta_{\footnotesize{\mathcal{P}_1}} \oplus \Delta_{\footnotesize{\mathcal{P}_5}}  = \{ {\bf x}:\  x_3=x_4=0 \} .
\end{array}
$$
Considering the intersection  of all the subspaces, we get one more element, the anti-synchrony null subspace $\{ 0_{\R^4} \}$ which corresponds to the bottom of the lattice ${\mathcal L}_{W_G}$. We have, then, ${\mathcal L}_{W_G}  = \{ \Delta_{\footnotesize{\mathcal{P}_i}},\ i=1,\ldots, 9 \} \cup P \cup \{ 0_{\R^4} \}$.

The  eigenvalues of $L_G$ are $0,1,2,3$ and the corresponding eigenspaces are
{\small 
$$
E_0 = <(1,1,1,1)>,\ E_1 = <(1,1,0,0)>,\ E_2 = <(1,1,-1,1)>,\ E_3 = <(1,-1,0,0)>\, .
$$
}
The generalized polydiagonals associated with the eigenvectors of $L_G$ are, besides $\Delta_{\footnotesize{\mathcal{P}_1}}$ and $\Delta_{\footnotesize{\mathcal{P}_5}}$,
$$
\begin{array}{ll}
 \Delta_{\footnotesize{\mathcal{P}_{10}}} = \{ {\bf x}:\ x_1=x_2=x_3=x_4 \},  \qquad & \Delta_{\footnotesize{\mathcal{P}_{11}}} = \{ {\bf x}:\ x_1=x_2=x_4 =- x_3\}.
\end{array}
$$ 
We have that $\Delta_{\footnotesize{\mathcal{P}_{10}}} = E_0$ and $\Delta_{\footnotesize{\mathcal{P}_{11}}}=E_2$ are left invariant by the Laplacian matrix $L_G$. Thus, they are a synchrony and an anti-synchrony subspace for $G$, respectively. By the results in  \cite{AD14},  $\Delta_{\footnotesize{\mathcal{P}_1}}$, $\Delta_{\footnotesize{\mathcal{P}_5}}$, $\Delta_{\footnotesize{\mathcal{P}_{10}}}$ and $\Delta_{\footnotesize{\mathcal{P}_{11}}}$ form a sum-dense set for the lattice ${\mathcal L}_{L_G}$. 
Considering the possible sums of two or more of these subspaces, we get the following synchrony and  anti-synchrony subspaces for $G$ in ${\mathcal L}_{L_G}$,
{\tiny 
$$
\Delta_{\footnotesize{\mathcal{P}_2}} =  \Delta_{\footnotesize{\mathcal{P}_5}} \oplus \Delta_{\footnotesize{\mathcal{P}_{11}}}, \quad  
\Delta_{\footnotesize{\mathcal{P}_6}} =  \Delta_{\footnotesize{\mathcal{P}_5}} \oplus \Delta_{\footnotesize{\mathcal{P}_{10}}}, \quad 
\Delta_{\footnotesize{\mathcal{P}_9}}  = \Delta_{\footnotesize{\mathcal{P}_1}} \oplus \Delta_{\footnotesize{\mathcal{P}_5}}, \quad 
\Delta_{\footnotesize{\mathcal{P}_{12}}}  = \Delta_{\footnotesize{\mathcal{P}_{10}}} \oplus \Delta_{\footnotesize{\mathcal{P}_{11}}}  = \{ {\bf x}:\  x_1=x_2=x_4 \}, 
$$
}
{\tiny 
$$
\Delta_{\footnotesize{\mathcal{P}_{7}}}  = \Delta_{\footnotesize{\mathcal{P}_1}} \oplus \Delta_{\footnotesize{\mathcal{P}_{5}}} \oplus \Delta_{\footnotesize{\mathcal{P}_{11}}}, \quad 
\Delta_{\footnotesize{\mathcal{P}_8}}  = \Delta_{\footnotesize{\mathcal{P}_5}} \oplus \Delta_{\footnotesize{\mathcal{P}_{10}}} \oplus \Delta_{\footnotesize{\mathcal{P}_{11}}}, \quad 
\Delta_{\footnotesize{\mathcal{P}_{13}}}  = \Delta_{\footnotesize{\mathcal{P}_1}} \oplus \Delta_{\footnotesize{\mathcal{P}_{5}}} \oplus \Delta_{\footnotesize{\mathcal{P}_{10}}} = \{ {\bf x}:\  x_3=x_4 \}\, .
$$
}

Considering the intersection  of all the subspaces, we get one more element, the anti-synchrony null subspace $\{ 0_{\R^4} \}$ which corresponds to the bottom of the lattice ${\mathcal L}_{L_G}$. Thus, ${\mathcal L}_{L_G}  = \{ \Delta_{\footnotesize{\mathcal{P}_i}},  i\in \{1,2, 5,\ldots,13\} \} \cup P \cup \{ 0_{\R^4} \}$. 
We have, then, that the set of the synchrony and anti-synchrony subspaces for $G$ is ${\mathcal L}_{W_G} \cup {\mathcal L}_{L_G} = \{ \Delta_{\footnotesize{\mathcal{P}_i}},\ i=1,\ldots, 13 \} \cup P \cup \{ 0_{\R^4} \}$.

The partitions $\mathcal{P}_6$ and $\mathcal{P}_8$ are balanced and the partitions $\mathcal{P}_{10}, \mathcal{P}_{12}, \mathcal{P}_{13}$ are strictly exo-balanced. The generalized partitions $\mathcal{P}_1$, $\mathcal{P}_2$, $\mathcal{P}_5$, $\mathcal{P}_7$ and $\mathcal{P}_9$ are odd-balanced, the generalized partitions $\mathcal{P}_1$, $\mathcal{P}_2$, $\mathcal{P}_5$, $\mathcal{P}_7$, $\mathcal{P}_9$ and $\mathcal{P}_{11}$ are linear-balanced, and the generalized partitions $\mathcal{P}_1$, $\mathcal{P}_2$, $\mathcal{P}_3$, $\mathcal{P}_4$,  $\mathcal{P}_5$, $\mathcal{P}_7$ and $\mathcal{P}_{9}$ are even-odd-balanced.

Thus, by Theorem~\ref{thm:final}, we have:\\
(i) the synchrony subspaces for the admissible systems in $I_G$, $I_{G,eo}$ are $\Delta_{\footnotesize{\mathcal{P}_{6}}}$ and $\Delta_{\footnotesize{\mathcal{P}_{8}}}$; \\
(ii) the synchrony subspaces for the admissible systems in $I_{G,0}$, $I_{G,odd}$, $I_{G,l}$  are those in (i) together with $\Delta_{\footnotesize{\mathcal{P}_{10}}}$, $\Delta_{\footnotesize{\mathcal{P}_{12}}}$ and $\Delta_{\footnotesize{\mathcal{P}_{13}}}$; \\
(iii) the anti-synchrony subspaces for the admissible systems in $I_{G,odd}$ are $\Delta_{\footnotesize{\mathcal{P}_{1}}}$, $\Delta_{\footnotesize{\mathcal{P}_{2}}}$, 
$\Delta_{\footnotesize{\mathcal{P}_{5}}}$, $\Delta_{\footnotesize{\mathcal{P}_{7}}}$  and $\Delta_{\footnotesize{\mathcal{P}_{9}}}$; \\
(iv) the anti-synchrony subspaces for the admissible systems in $I_{G,l}$ are those in (iii) together with $\Delta_{\footnotesize{\mathcal{P}_{11}}}$; \\
(v) the anti-synchrony subspaces for the admissible systems in $I_{G,eo}$ are those in (iii) together with $\Delta_{\footnotesize{\mathcal{P}_{3}}}$ and  $\Delta_{\footnotesize{\mathcal{P}_{4}}}$. 
\hfill $\Diamond$
\end{exam}

\begin{rem}\normalfont 
As illustrated by Example~\ref{ex:qutro}, even for a non-regular network $G$ the intersection ${\mathcal L}_{W_G} \cap {\mathcal L}_{L_G}$ can be non-trivial. 
%
\hfill $\Diamond$
\end{rem}

\begin{rem}\normalfont \label{rmk:reg} 
For the particular case of regular networks, ${\mathcal L}_{W_G} = {\mathcal L}_{L_G}$ and so  the network set of synchrony and anti-synchrony subspaces can be obtained using either the eigenvalue and eigenvector structure of the network adjacency matrix or of the Laplacian matrix.
\hfill $\Diamond$
\end{rem}

\section{Conclusions} \label{sec:conclu}		

In this paper, we caracterize the set ${\mathcal L}_{W_G} \cup {\mathcal L}_{L_G} $ of the synchrony and anti-synchrony subspaces of a general weighted network $G$, which corresponds to the generalized polydiagonals invariant under the adjacency and/or Laplacian matrices of $G$. More precisely, the set ${\mathcal L}_{W_G}$ of  synchrony and anti-synchrony subspaces of a general weighted network $G$ corresponds to the generalized polydiagonals that are flow-invariant by any coupled cell system with input additive structure that are even-odd-balanced.  These are in correspondence with the generalized polydiagonals invariant under the network adjacency matrix. The set ${\mathcal L}_{L_G}$ of  synchrony and anti-synchrony subspaces of a general weighted network $G$ corresponds to the generalized polydiagonals that are flow-invariant by any coupled cell system with input additive structure that are linear-balanced. These are in correspondence with the generalized polydiagonals invariant under the network Laplacian matrix.

Much of this work is motivated by the work presented by Neuberger, Sieben, and Swift in \cite{NSS19}, which we extend in several aspects. In \cite{NSS19}, the authors consider undirected networks without weights on the connections. Here, we consider weighted directed networks. In \cite{NSS19}, the associated admissible systems are difference-coupled vector fields, a special class of the input-additive vector fields that we consider here. In our setting we have a more general definition of anti-synchrony subspace in the sense that for the associated tagged partition a part and its counterpart may have a different number of cell elements. Moreover, there can be parts with no counterparts. We have also that, contrary to what happens in \cite{NSS19}, an anti-synchrony subspace can correspond to a generalized polydiagonal susbpace that is invariant by the adjacency matrix of the network and not by its Laplacian matrix. In \cite{NSS19}, the set of anti-synchrony subspaces corresponds to the matched polydiagonals that are left invariant by the Laplacian matrix of the network.
				
\vspace{5mm}

\noindent {\bf Acknowledgments} \\
The authors were partially supported by CMUP, which is financed by national funds through FCT-- Funda\c{c}\~ao para a Ci\^encia e a Tecnologia, I.P., under the project with reference UIDB/00144/2020.

\end{document}